\newtheorem{theorem}{Theorem}[section]
\newtheorem{definition}[theorem]{Definition}
\newtheorem{lemma}[theorem]{Lemma}
\newtheorem{corollary}[theorem]{Corollary}
\newtheorem{remark}[theorem]{Remark}
\newtheorem{proposition}[theorem]{Proposition}
\newtheorem{notation}[theorem]{Notation}
\def\cal{\mathcal}
\def\R{{\mathbb R}}
\def\<{\left<}
\def\>{\right>}
\def\I{{\cal I}}
\def\BMO{{\rm BMO}}
\def\CMO{{\rm CMO}}
\def\({( \hspace{-0.335em}(}
\def\){) \hspace{-0.335em})}
\def\fu2{\frac{n}{2}}
\def\l({\left(}
\def\r){\right)}
\def\be{\begin{enumerate}}
\def\ee{\end{enumerate}}
\def \diam {{\rm diam}}
\def \dist {{\rm dist}}
\def \rdist {{\rm \, rdist}}
\def \inrdist {{\rm \, inrdist}}
\def \ec {{\rm ec}}
\def \child{{\rm ch}}
\def \Int{{\rm Int}}
\newcommand\smland{\raise.2ex\hbox{$\scriptstyle\land$}\hspace{.01cm}}
\newcommand\smlor{\raise.1ex\hbox{$\scriptstyle\lor$}}
\title[A global $Tb$ Theorem for compactness and boundedness]{A global $Tb$ Theorem\\ for compactness and boundedness}
\author{Paco Villarroya}
\address{Department of Mathematics, University of Georgia, Athens, GA 30602, USA}
\email{paco.villarroya@uga.edu}
\thanks{The author has been partially supported by Spanish Ministry of Economy and Competitiveness (project grants 
MTM2011-23164 and MTM2014-53009-P)}
\subjclass[2010]{42B20, 42B25, 42C40, 47G10}
\keywords{Calder\'on-Zygmund operator, compact operator, accretive function, wavelet systems.} 
\begin{document}

\maketitle

\begin{abstract}
We prove a $Tb$ Theorem that characterizes all Calder\'on-Zygmund operators 
that extend compactly 
on 
$L^{p}(\mathbb R^{n})$, $1<p<\infty $.
The result, whose proof 
does not require the property of accretivity, can be used to prove compactness of the Double Layer Potential operator on a wide class of domains.

The study also provides conditions for boundedness of singular integral operators 
by means of non-accretive testing functions.

\end{abstract}

\section{Introduction}


The seminal $T1$ Theory \cite{DJ}
was soon extended to a $Tb$ Theory
in which boundedness of singular integral operators is tested through their action over functions $b$ more general than the function $1$.
A. McIntosh and I. Meyer \cite{MM} obtained a $Tb$ Theorem in the special case $Tb_{1}=T^{*}b_{2}=0$ and, in an independent work, G. David, J. L. Journ\'e and S. Semmes \cite{DJS} solved the general case. 
Whereas the $T1$ Theorem proved
boundedness of the Cauchy integral over graphs with small Lipschitz constant, the
$Tb$ result established this result in full generality, and also boundedness of the Double Layer Potential operator.
More on the early developments of the theory can be found in \cite{David}, \cite{ke}.

These results generated an
intense flow of research, 
still active nowadays, 
producing
a variety of $Tb$ Theorems 
that apply to different settings:  
from singular integrals on non-homogeneous spaces \cite{NTVTb},
to operators with vector-valued Calder\'on-Zygmund kernels \cite{Hy}
or singular integral operators between weighted spaces \cite{Hy3}. 

The recent paper \cite{V} introduced a $T1$ Theorem to characterize compactness of Calder\'on-Zygmund operators. 
Now, following the classical line of progress, 
we 
present in the current paper a compact $Tb$ result, that is, a
criterion of compactness relying on the action of the operator over testing functions 
$b$ as general as possible (Theorem \ref{Mainresult2}). 

In the classical theory,
the testing functions 
used to check 
boundedness 
satisfy a non-degeneracy property called
accretivity, which essentially implies the existence of 
lower bounds for the testing functions or for their averages (see \cite{David}). 
In the setting of compact operators, we show that the hypothesis of accretivity can be relaxed to a large extend. The reason, speaking quite broadly, is that compact singular integral operators exhibit an extra decay to zero (see \cite{V}).
Then one can use this decay to allow the averages of the testing functions to tend to zero as long as 
their inverses
grow slower than 
the operator extra decay tends to zero. 
As a result,
compactness can be checked over a larger class of testing functions. The class varies with the operator under study: the faster its bounds decay, the larger the class can be.
This allows the existence of global but well localized testing functions and so, it
justifies the development of a global $Tb$ Theorem before studying the corresponding local result.

The main result in the paper is \hyperref[Mainresult2]{Theorem \ref{Mainresult2}}, which proves compactness of the Double Layer Potential operator for 
a large the class of domains (see \cite{ke}). 
Classically, compactness is proved after verifying, by means of $Tb$ Theorem, that the operator is bounded. Since the latter result requires the testing function being accretive, this imposes non necessary hypotheses on the regularity of the boundary of the domain. The new results weaken these hypotheses by allowing the use of non-accretive testing functions. 

Furthermore, since the proof of compactness is based on deeper investigations on boundedness, 
in Corollary \ref{Mainresultrestrictedbddness1}
we extend the classical $Tb$ Theorem to a 
criterium of boundedness which
does not require accretive testing functions. 

In sections 2, 3 we introduce some notation and definitions, while in section 4 we state the main results.
Sections 5, 6 and 7 are devoted to study the auxiliary functions used to characterize compactness,  
develop estimates for the dual pair over functions with adjacent supports, and define $Tb_{1}$ and $T^{*}b_{2}$.
In the following four sections we prove sufficiency of the hypotheses of Theorem \ref{Mainresult2}, leaving their necessity for section \ref{necessity section}.

I express my appreciation to Christoph Thiele and Diogo Oliveira e Silva
for the organization of 
the Summer School '$T(1)$ and $T(b)$ Theorems and Applications' and to all its participants. The meeting was 
a very exciting 
event and 
a great source of inspiration for this project. 
I also thank the support from
\hspace{-.3cm}
\begin{CJK}{UTF8}{mj}
^^ea^^b9^^80^^ec^^9e^^90^^ec^^98^^81
\end{CJK}
\hspace{-.3cm} 
in Sunnyvale, USA, where most of this research was developed. 

\section{Notation and definitions}

%

\subsection{Notation}\label{ecandrdist}
We denote by ${\mathcal C}$, ${\mathcal D}$ the families of cubes $I=\prod_{i=1}^{n}[a_{i},b_{i})$ and dyadic cubes
$I=2^{j}\prod_{i=1}^{n}[k_{i},k_{i}+1)$ for $j,k_{i}\in \mathbb Z$, respectively. Given a measurable set $\Omega \subset \mathbb R^{n}$, we denote by $\mathcal D(\Omega )$ the family of all $I\in {\mathcal D}$ such that $I\subset \Omega$.

For $I\in \mathcal C$, we denote its centre by $c(I)$, its side length by $\ell(I)$ and its volume by $|I|$.
For $\lambda >0$, we denote by $\lambda I$, the unique cube such that $c(\lambda I)=c(I)$ and $\ell(\lambda I)=\lambda \ell(I)$. 
We write $\mathbb B=[-1/2,1/2)^{n}$ and $\mathbb B_{\lambda }=\lambda \mathbb B$.
We denote by $|\cdot |_{\infty }$ the $l^{\infty }$-norm in $\mathbb R^{n}$ and by $|\cdot |$ the modulus of a complex number. 

Given two cubes $I,J\in {\mathcal C}$,
if $\ell(J)\leq \ell(I)$ we denote $I\smland J=J$, $I\smlor J=I$;
while if $\ell(I)<\ell(J)$ we write
$I\smland J=I$, $I\smlor J=J$.
We define $\langle I,J\rangle$ as the unique cube containing $I\cup J$ with the smallest possible side length and 
such that
$\sum_{i=1}^{n}c(I)_{i}$ is minimum. 
We denote its side length by $\diam(I\cup J)$. 
Note 
the equivalence:
\begin{eqnarray*}
\diam(I\cup J) & \approx & \ell(I)/2+|c(I)-c(J)|_{\infty }+\ell(J)/2.
\end{eqnarray*}

We also define the eccentricity and relative distance of $I$ and $J$ as
$$
\ec(I,J)=\frac{\ell(I\smland J)}{\ell(I\smlor J)},
\hskip30pt
\rdist(I,J)=\frac{\ell(\langle I,J\rangle )}{\ell(I\smlor J)}.
$$
Note that
\begin{align*}
\rdist(I,J) & \approx  1+\frac{|c(I)-c(J)|_{\infty }}{\ell(I\smlor J)}
\approx  1+\frac{\dist(I,J)}{\ell(I\smlor J)},
\end{align*}
where $\dist(I,J)$ is the set distance between $I$ and $J$ in the norm $| \cdot |_{\infty }$. 

Given $I\in \mathcal D$, we denote by $\partial I$ the boundary of $I$ and by 
$\child(I)$ the family of dyadic cubes $I'\subset I$ such that $\ell(I')=\ell(I)/2$. 
Given $I\in \mathcal D$, we denote by $I_{p}$ the parent cube of $I$, that is, the only dyadic cube such that 
$I\in \child(I_{p})$. 
We define
the inner boundary of $I$ as $\mathfrak{D}_{I}=\displaystyle{\cup_{I'\in \child(I)}\partial I'}$.

When $J\subset 3I$, we define the inner relative distance of $J$ and $I$ by
$$
\inrdist(I,J)=1+\frac{\dist(J,{\mathfrak D}_{I})}{\ell(J)}.
$$


\subsection{Compact Calder\'on-Zygmund kernel}

\begin{definition} \label{Imdef}
For every $M\in \mathbb N$, let ${\cal C}_{M}$ be the family of cubes in $\mathbb R^{n}$ such that
$2^{-M}\leq \ell(I)\leq 2^{M}$ and
$\rdist(I,\mathbb B_{2^{M}})\leq M$.  We define ${\cal D}_{M}={\mathcal D}\cap {\cal C}_{M}$ and ${\cal D}_{M}(\Omega )={\cal D}(\Omega )\cap {\cal D}_{M}$.

\end{definition}

%

\begin{notation}\label{LSDF}
To study compactness of singular operators, we use three 
bounded functions 
$L,S,D: 
[0,\infty)\rightarrow [0,\infty )$
satisfying 
\begin{equation}\label{limits}
\lim_{x\rightarrow \infty }L(x)=\lim_{x\rightarrow 0}S(x)=\lim_{x\rightarrow \infty }D(x)=0.
\end{equation}
We denote
$F(a, b, c)=L(a)S(b)D(c)$ and $F(a)=F(a,a,a)$. 

By abuse of notation, we write for each cube $I$,
$L(I)=L(\ell(I))$, $S(I)=S(\ell(I))$ and $D(I)=D(\rdist(I,\mathbb B))$. 
Given three cubes $I_{1},I_{2},I_{3}$, 
we define
$
F(I_{1}, I_{2}, I_{3})=L(I_{1})S(I_{2})D(I_{3})
$
and $F(I)=F(I,I,I)$. 

For $\delta >0$, we denote
\begin{align*}
\tilde{L}(I)&=\sum_{k\geq 0}2^{-kn}L(2^{-k}\ell(I)),
\hskip15pt
\tilde{D}(I)=\sum_{k\geq 0}2^{-k\delta }D(\rdist(2^{k}I,\mathbb B)),
\end{align*}
and write 
$\tilde F(I_{1}, I_{2}, I_{3})=\tilde{L}(I_{1})S(I_{2})\tilde{D}(I_{3})$
%
and $\tilde F(I)=\tilde F(I,I,I)$.

\end{notation}
Since the dilation of a function satisfying one of the limits in (\ref{limits}) satisfies the same limit, namely 
${\mathcal D}_{\lambda}L(a)=L(\lambda^{-1}a)$ satisfies the first limit, we often omit universal constants appearing in the argument of these functions.
We note that, by Lebesgue's Dominated Convergence Theorem, 
$\tilde{L}$ and $\tilde{D}$ satisfy the corresponding limits in \eqref{limits}.

\begin{definition}
\label{prodCZoriginal}
A measurable function $K:(\mathbb R^{n}\times \mathbb R^{n}) \setminus 
\{ (t,x)\in \mathbb R^{n}\times \mathbb R^{n} : t=x\} \to \mathbb C$ is a
compact Calder\'on-Zygmund kernel if it is bounded on compact sets of its domain
and
there exist $0<\delta \leq 1$ and functions $L,S,D$ satisfying Definition \ref{LSDF}
such that
\begin{equation}\label{smoothcompactCZ}
|K(t,x)-K(t',x')|
\lesssim
\frac{(|t-t'|_{\infty }+|x-x'|_{\infty })^\delta}{|t-x|_{\infty }^{n+\delta}}
F_{K}(t,x),
\end{equation}
whenever $2(|t-t'|_{\infty }+|x-x'_{\infty }|)<|t-x|_{\infty }$ with
\begin{align*}
F_{K}(t,x)&
=L(|t-x|_{\infty })S(|t-x|_{\infty })D(|t+x|_{\infty }).
\end{align*}
\end{definition}

For technical reasons, we will mostly use the following alternative formulation of a compact Calder\'on-Zygmund kernel: 
\begin{equation}\label{LSD}
|K(t, x)-K(t',x')|
\lesssim
\frac{(|t-t'|_{\infty }+|x-x'|_{\infty })^\delta}{|t-x|_{\infty }^{n+\delta}}F_{K}(t,x,t',x'),
\end{equation}
whenever $2(|t-t'|_{\infty }+|x-x'|_{\infty })<|t-x|_{\infty}$ with $0<\delta <1$ and 
$$F_{K}(t,x,t',x')=L_{1}(|t-x|_{\infty })S_{1}(|t-t'|_{\infty }+|x-x'|_{\infty })D_{1}\Big(1+\frac{|t+x|_{\infty }}{1+|t-x|_{\infty }}\Big),$$ 
where
$L_{1}, S_{1}, D_{1}$ satisfy the limits in
(\ref{limits}). 
As it is explained in \cite{V},
condition \eqref{LSD} can be obtained from 
\eqref{smoothcompactCZ}.

In \cite{V}, we proved in the one-dimensional case that the smoothness condition 
\eqref{smoothcompactCZ}
essentially implies the pointwise decay condition 
\begin{equation}\label{kerneldecay}
|K(t, x)|
\lesssim
\frac{F_{K}(t, x)}{|t-x|_{\infty }^{n}}
\end{equation}
with $F_{K}(t,x)\! =\! L(|t-x|_{\infty })S(|t-x|_{\infty })D(1+\frac{|t+x|_{\infty }}{1+|t-x|_{\infty }})$.

\subsection{Operator with a compact Calder\'on-Zygmund kernel}

%


\begin{definition}\label{intrep}
Let $T$ be a linear operator bounded on $L^{2}(\mathbb R^{n})$. 
Let  $b_{1}, b_{2}$ be locally integrable functions. 


$T$ is associated with a compact Calder\'on-Zygmund kernel if there exists a function $K$ 
satisfying Definition \ref{prodCZoriginal} such that 
for all $f,g$ 
with disjoint compact supports, 
the following
integral representation holds:
\begin{equation}\label{kernelrep}
\langle T(b_{1}f), b_{2}g\rangle =\int_{\R^{n}}\int_{\R^{n}} f(t)g(x) K(t,x)b_{1}(t)b_{2}(x)\, dt \, dx.
\end{equation}
\end{definition}

Boundedness of the operator is assumed to provide the integral representation, but we will only use this hypothesis qualitatively. We will work to obtain bounds that only depend on the implicit constant of the compact Calder\'on-Zygmund kernel and the conditions of next section: the weak compactness condition and the $\BMO$ norm of $Tb_{1}$, $T^{*}b_{2}$.


\begin{notation}

Given an operator $T$ and $b_{1}, b_{2}$ measurable functions,  
we write 
$T_{b}=M_{b_{2}}^{*}\circ T \circ M_{b_{1}}$, 
with $M_{b_{i}}(f)=b_{i}f$ the pointwise multiplication operator.
\end{notation}

\section{The weak compactness and the cancellation conditions}
We introduce the hypotheses for compactness of singular integral operators:
the weak compactness condition and the membership of $Tb_{1}, T^{*}b_{2}$ 
to $\CMO_{b}(\mathbb R^{n})$.

\begin{definition}
Given  $b$ a 
locally integrable function from $\mathbb R^{n}$ to $\mathbb C$ and $1\leq q\leq \infty $, 
we denote for every 
cube $I\in \mathcal D$
$$
\langle b\rangle_{I}=\frac{1}{|I|}\int_{I}b(x)dx,
\hskip30pt
[b]_{I,q}
=\Big(\frac{1}{|I|}\int_{I}|b(x)|^{q}dx\Big)^{\frac{1}{q}}.
$$
Then the maximal function can be written as $\displaystyle M_{q}b(x)=\sup_{x\in I\in \mathcal C}[b]_{I,q}$.
\end{definition}

\subsection{The weak compactness condition}
\begin{definition}\label{weakcompactnessdef}
Let $1\leq q_{1},q_{2}\leq \infty $, and
$b_{1}, b_{2}$ be locally integrable functions. 
A linear operator $T$ 
satisfies the weak compactness condition
if there exists a bounded function $F_{W}$ satisfying Definition \ref{LSDF}
such that for every $\epsilon>0$ there
exists $M_{0}\in \mathbb N$ so that 
\begin{equation}\label{restrictcompact2}
|\langle T(b_{1}^{\alpha }\chi_I),b_{2}^{\beta}\chi_{I}\rangle |
\lesssim 
|I|[b_{1}^{\alpha }]_{I,q_{1}}[b_{2}^{\beta}]_{I,q_{2}}(F_{W}(I;M)
+\epsilon )
\end{equation}
for all $I\in {\mathcal D}$, $M>M_{0}$ and $\alpha, \beta \in \{0,1\}$. We note that $b_{i}^{0}\equiv 1$, $b_{i}^{1}\equiv b_{i}$.

We say
that $T$ satisfies the weak boundedness condition
if \eqref{restrictcompact2} holds with a function $F_{W}$ for which some of the limits in (\ref{limits}) may not hold. 
\end{definition}

Due to the presence of the exponents $\alpha $, $\beta$, 
this definition is more restrictive than the classical concept of weak boundedness. But 
for most operators the same calculations used to check the standard inequality of weak boundedness (or compactness) suffice to establish \eqref{restrictcompact2}. 
The need for this particular formulation originates in Lemma \ref{orthorep}. 

The factor $F_{W}(I;M)+\epsilon $ is justified by the result in Proposition \ref{neceweakcompactnessfinal}. In \cite{V}, some other alternative definitions of this property are discussed.

\subsection{Characterization of compactness}

\begin{definition}\label{lagom}
Let $E$ be a Banach space of functions with domain in $\mathbb R^{n}$. 
Let $(\psi_{I})_{I\in {\mathcal D}}$ be a wavelet system in $E$ and 
$(\tilde{\psi}_{I})_{I}$ be the dual system. 
Then for $M\in \mathbb N$ 
we define the lagom projection operator by
$$
P_{M}f=\sum_{I\in {\cal D}_{M}}\langle f,\tilde{\psi}_{I}\rangle \psi_{I},
$$
where $\langle f,\tilde{\psi}_{I}\rangle =\int_{\mathbb R^{d}}f(x)\tilde{\psi}_{I}(x)dx$.
Note that 
$\displaystyle
P_{M}^{*}f=\sum_{I\in {\cal D}_{M}}\langle f,\psi_{I}\rangle \tilde{\psi}_{I}
$.

\vspace{-.3cm}
We also define $P_{M}^{\perp }f=f-P_{M}f$.
\end{definition}


\begin{remark}\label{actionproduct}
The unusual definition of $\langle ,\rangle $
in Definition \ref{lagom} is a customary license in the literature on $Tb$ theorems.
\end{remark}

To prove compactness of an operator on $L^{2}(\mathbb R^{n})$ it is enough to show that 
$
\langle {P_{M}^{*}}^{\perp}TP_{M}^{\perp}f,g\rangle 
$
tends to zero uniformly for all $f,g$ 
in the unit ball of 
$L^{2}(\mathbb R^{n})$. 
The reason for this is the following decomposition:
$$
Tf
=P_{M}^{*}Tf+{P_{M}^{*}}^{\perp}TP_{M}f+{P_{M}^{*}}^{\perp}TP_{M}^{\perp}f.
$$
The first term is a finite rank operator and thus, compact on $L^{2}(\mathbb R^{n})$. 
The adjoint of the second term, that is $P_{M}^{*}T^{*}P_{M}^{\perp}$, is of finite rank and so, the second term is also compact on $L^{2}(\mathbb R^{n})$. Therefore, we only need to prove that the operator norm of the third term tends to zero. 

\subsection{The cancellation condition. The spaces $\CMO_{b}(\mathbb R^{n})$ and
$H^{1}_{b}(\mathbb R^{n})$} 
We now provide the definition of the space to which the functions $Tb_{1}, T^{*}b_{2}$ must belong when $T$ is compact.

\begin{notation}\label{CBI} 
A locally integrable function $b$ has non-zero dyadic averages if $\langle b\rangle_{I}\neq 0$ for all 
$I\in \mathcal D$.
Then, for $I\in \mathcal D$ and $1\leq q\leq \infty$, we write
$C_{I}^{b}=\frac{1}{|\langle b\rangle_{I}|}+\frac{1}{|\langle b\rangle_{I_{p}}|}$
and
$B_{I,q}^{b}=\frac{ [b]_{I,q}}{|\langle b\rangle _{I}|}+\frac{[b]_{I_{p},q}}{|\langle b\rangle_{I_{p}}|}$, where 
$I\in \child(I_{p})$.
\end{notation}

\begin{definition}\label{CMObdef}
Let  $b_{1},b_{2}$ be locally integrable functions with nonzero dyadic averages. Let $(\psi_{I}^{b_{2}})_{I}$ be the wavelet system of Definition \ref{defpsi}.
We define 
${\BMO}_{b}(\mathbb R^{n})$ as the space of locally integrable functions $f$ such that 
\begin{equation}\label{BMOb2}
\sup_{I\in \mathcal I}\Big(\frac{1}{|I|}
\sum_{J\in {\cal D}(I)}
|\alpha_{I,J}|^{2}|\langle f,\tilde\psi_{J}^{b_{2}}\rangle |^{2}\Big)^{1/2}<\infty 
\end{equation}
with $\alpha_{I,J}=
(B_{I,q_{2}}^{b_{2}})^{2}
\Big(1+\frac{1}{|\langle b_{1}\rangle_{J}|}\Big)\frac{[b_{2}]_{J,2}}{|\langle b_{2}\rangle_{J}|}$ 
and similarly changing the roles of $b_{1}$ and $b_{2}$. 

We define $\CMO_{b}(\mathbb R^{n})$ as the closure in $\BMO_{b}(\mathbb R^{n})$ of the space of continuous functions vanishing at infinity.
\end{definition}


Since $\alpha_{I,J}\gtrsim 1$, we have that  $\BMO_{b}(\mathbb R^{n})\subseteq \BMO(\mathbb R^{n})$. When $b_{i}$ are bounded and accretive, 
we have  $\BMO_{b}(\mathbb R^{n})=\BMO(\mathbb R^{n})$
and $\CMO_{b}(\mathbb R^{n})=\CMO(\mathbb R^{n})$.
Next lemma gives a characterization of $\CMO_{b}(\mathbb R^{n})$ in terms of 
wavelet decompositions:


\begin{lemma} \label{lem:cmochar} The following statements are equivalent:
\begin{enumerate}
\item[i)] $f\in \CMO_{b}(\mathbb R^{n})$,

\item[ii)] $f\!\in \! \BMO_{b}(\mathbb R^{n})$ with
${\displaystyle 
\lim_{M\rightarrow \infty} 
\sup_{I\in \mathcal I}\Big(\frac{1}{|I|}
\hspace{-.1cm} \sum_{J\in {\cal D}_{M}(I)^{c}}
\hspace{-.3cm}|\alpha_{I,J}|^{2}
|\langle f,\tilde\psi_{J}^{b_{2}}\rangle |^{2}\Big)^{1/2}\hspace{-.4cm}=0}$,
and similarly changing the roles of $b_{1}$ and $b_{2}$.


\end{enumerate}
\end{lemma}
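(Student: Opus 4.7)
The plan is a standard density-plus-truncation argument in the wavelet setting, using the biorthogonality $\langle \psi_I^{b_2},\tilde\psi_J^{b_2}\rangle=\delta_{I,J}$ built into the system of Definition \ref{defpsi} (and its swapped-roles counterpart, used for the second half of the $\BMO_b$ norm).

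For the direction (i)$\Rightarrow$(ii), I would run a three-$\varepsilon$ argument. First I would prove the tail condition directly for every $g$ in a dense subclass of continuous functions vanishing at infinity, say smooth compactly supported $g$. For such $g$ the coefficient $|\langle g,\tilde\psi_J^{b_2}\rangle|$ has explicit smallness outside $\mathcal D_M$ for three separate reasons: when $\ell(J)<2^{-M}$, the cancellation of $\tilde\psi_J^{b_2}$ combined with the smoothness of $g$ yields an extra power of $\ell(J)$; when $\ell(J)>2^M$, the size and support of $\tilde\psi_J^{b_2}$ force the pairing to be small; when $\rdist(J,\mathbb B_{2^M})>M$, the compact support of $g$ eventually decouples $g$ from $\tilde\psi_J^{b_2}$. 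Summing the weighted squares $\alpha_{I,J}^{2}|\langle g,\tilde\psi_J^{b_2}\rangle|^{2}$ over $J\in\mathcal D_M(I)^{c}$ and taking the supremum over $I$ then gives vanishing as $M\to\infty$. For general $f\in\CMO_b$ and $\varepsilon>0$, pick $g\in C_0(\mathbb R^n)$ with $\|f-g\|_{\BMO_b}<\varepsilon$, split $\langle f,\tilde\psi_J^{b_2}\rangle=\langle f-g,\tilde\psi_J^{b_2}\rangle+\langle g,\tilde\psi_J^{b_2}\rangle$, and apply the triangle inequality in the $\ell^{2}$-sum with weights $\alpha_{I,J}$: the $f-g$ piece is majorised by $\|f-g\|_{\BMO_b}<\varepsilon$ because the tail is a sub-sum of the full Carleson sum defining $\BMO_b$, while the $g$ piece is $<\varepsilon$ once $M$ is large depending on $g$.

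For (ii)$\Rightarrow$(i), I would approximate $f$ by its lagom projection $f_M=P_Mf=\sum_{J\in\mathcal D_M}\langle f,\tilde\psi_J^{b_2}\rangle\psi_J^{b_2}$. By biorthogonality, $\langle f-f_M,\tilde\psi_J^{b_2}\rangle$ equals $\langle f,\tilde\psi_J^{b_2}\rangle$ for $J\notin\mathcal D_M$ and vanishes otherwise, with an analogous statement for the symmetric coefficients defining the other half of (\ref{BMOb2}). Hence $\|f-f_M\|_{\BMO_b}$ is controlled by the two tails assumed in (ii), and so tends to $0$ as $M\to\infty$. It remains to show each $f_M$ lies in $\CMO_b$; since $\mathcal D_M$ is finite, $f_M$ is a finite linear combination of wavelets $\psi_J^{b_2}$, each of which is compactly supported and bounded, hence approximable in $\BMO_b$ by continuous functions vanishing at infinity (for instance via a standard mollification). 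Closedness of $\CMO_b$ in $\BMO_b$ then places $f$ in $\CMO_b$.

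The main obstacle lies in the non-accretive setting: the weights $\alpha_{I,J}$ carry the potentially unbounded factors $(B_{I,q_2}^{b_2})^2$ and $1/|\langle b_i\rangle_J|$, so the quantitative decay of $\langle g,\tilde\psi_J^{b_2}\rangle$ outside $\mathcal D_M$ must be strong enough to absorb these and produce summability with a uniform rate in $I$. This forces one to lean on the structural properties (compact support, cancellation against $b_2$, $L^2$ normalisation with $b_2$-weights) encoded in Definition \ref{defpsi}, together with the relative-distance truncation $\rdist(J,\mathbb B_{2^M})>M$ which pins down the ``far field'' part of the tail. A related but milder care is needed in (ii)$\Rightarrow$(i): if Definition \ref{defpsi} only endows the wavelets with H\"older regularity rather than with $C^{\infty}$ smoothness, the mollification step must be carried out with no more regularity than is actually guaranteed.
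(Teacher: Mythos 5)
The paper states this lemma without proof, so there is nothing to compare against directly; your density--plus--truncation scheme is certainly the natural route. But several of the structural facts you lean on are false for the adapted system of Definition \ref{defpsi}, and they leave real gaps. First, the system is \emph{not} biorthogonal: by Lemma \ref{psiortho}, $\langle\psi_I^{b},\tilde\psi_J^{b}\rangle=\delta(I-J)-\frac{|J|\langle b\rangle_J}{|I_p|\langle b\rangle_{I_p}}$ when $I_p=J_p$, and the system is not even linearly independent (see \eqref{2^d-1}). Hence your claim that $\langle f-P_Mf,\tilde\psi_J^{b_2}\rangle$ equals $\langle f,\tilde\psi_J^{b_2}\rangle$ for $J\notin\mathcal D_M$ and vanishes otherwise is incorrect: $\langle P_Mf,\tilde\psi_J^{b_2}\rangle$ picks up contributions from every sibling of $J$ lying in $\mathcal D_M$. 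This is repairable (siblings of cubes of $\mathcal D_M$ lie in $\mathcal D_{M+2}$, so the discrepancy is a boundary layer absorbed in the limit), but the step as written does not hold. Second, $\psi_J^{b_2}=h_J^{b_2}b_2$ is compactly supported but in general unbounded, since $b_2$ is only locally integrable; and approximating it in the $\BMO_b$ norm by continuous functions vanishing at infinity is not a routine mollification, because the weights $\alpha_{I,J}$ carry the unbounded factors $(B_{I,q_2}^{b_2})^2$ and $1/|\langle b_i\rangle_J|$. You would first need an a priori bound of $\|\cdot\|_{\BMO_b}$ by an $L^2$-type quantity in the spirit of Lemma \ref{Planche}/Remark \ref{realPlanche}, and the weights appearing in \eqref{BMOb2} are not literally those controlled there.

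The most serious gap is in (i)$\Rightarrow$(ii) at small scales. You argue that for smooth $g$ the cancellation of $\tilde\psi_J^{b_2}$ together with the smoothness of $g$ yields an extra power of $\ell(J)$. But $\tilde\psi_J^{b_2}$ has mean zero against $b_2$, not against Lebesgue measure: explicitly
$\langle g,\tilde\psi_J^{b_2}\rangle=|J|^{1/2}\bigl(\langle g\rangle_J-\langle g\rangle_{J_p}\bigr)+|J|^{1/2}\bigl(1-\langle b_2\rangle_J/\langle b_2\rangle_{J_p}\bigr)\langle g\rangle_{J_p}$.
Only the first summand gains from the regularity of $g$; the second is governed by the oscillation of $b_2$ and is not pointwise small, so the small-cube tail does not vanish for the reason you give. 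One must split the coefficient exactly as in the proof of Lemma \ref{Planche} and handle the second piece through the square-function/Carleson embedding estimates combined with the compatibility hypotheses that render the weighted tail sums small (cf.\ Lemmata \ref{smallF} and \ref{Planche}). That absorption of the unbounded weights is precisely where the content of the lemma lies in the non-accretive setting, and your proposal acknowledges the difficulty but does not resolve it.
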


\begin{definition}\label{defH1dual}
A locally integrable function $a$ is called a $p$-atom with respect $b$ and $I\in \mathcal D$ 
if it is compactly supported on $I_{p}$, 
it has mean zero with respect to $b$ 
and
$\| a_{I}b\|_{L^{p}(\mathbb R^{n})}\leq 
B_{I,p}^{b}
|I|^{-1/p'}$. 

Then $H^{1}_{b}(\mathbb R^{n})$ is the space of functions $f=\sum_{I\in \mathcal D}\lambda_{I}a_{I}$ where 
$\lambda_{I}\in \mathbb C$ with
$\sum_{I\in \mathcal D}B_{I,p}^{b}
|\lambda_{I}|<\infty $ and 
$a_{I}$ is a $p$-atom with respect to $b$ and $I$. 

We denote by $\| f\|_{H^{1}_{b}(\mathbb R^{n})}$ the infimum of $\sum_{I\in \mathcal D}B_{I,p}^{b}
|\lambda_{I}|$ among all possible atomic decompositions. 

\end{definition}

\subsection{Compatibility instead of accretivity} We define the class of non-accretive testing functions available to characterize compactness.

\begin{notation}
Let $T$ be a linear operator with compact Calder\'on-Zygmund kernel and corresponding function $F_{K}$.
Let $b_{1}, b_{2}$ be locally integrable functions with non-zero dyadic averages such that 
$T$ satisfies the weak compactness condition with function $F_{W}$. 

Let 
$B\!F:\mathcal D\times \mathcal D\rightarrow (0,\infty )$ be defined by $B\! F= B\cdot F$ with 
\begin{align*}
B(I,J)&=C_{I}^{b_{1}}C_{J}^{b_{2}}
\Big(
\langle M_{q_{1}}b_{1}\rangle_{I}
\langle M_{q_{2}}b_{2}\rangle_{J} 
\\
&\hskip55pt 
+\langle M_{q_{1}}(b_{1}\chi_{I})\rangle _{I\smland J}
\langle M_{q_{2}}(b_{2}\chi_{J})\rangle _{I\smland J}\cdot  \chi_{\rdist(I\smland J, I\smlor J)\leq 3}\Big)
\\
F(I, J)&=
\tilde F_{K}(I\smland J, I\smland J,  \langle I,J\rangle )
+F_{W}(I;M_{T,\epsilon }) \chi_{I=J}, 
\end{align*}
where $C_{I}^{b_{1}}$
are defined in Notation \ref{CBI}.
\end{notation}

\begin{definition}[Compatible testing functions]\label{accretive} With previous notation, 
we say that $b_{1},b_{2}$ are testing functions compatible with $T$ if
\begin{equation}\label{b-averageandbound0}
\sup_{\tiny \begin{array}{c}I,J\in {\mathcal D}
\end{array}}
B\!F(I,J)
\Big(\frac{[b_{1}]_{I,q_{1}}}{|\langle b_{1}\rangle_{I}|}\Big)^{2}\Big(\frac{[b_{2}]_{J,q_{2}}}{|\langle b_{2}\rangle_{J}|}\Big)^{2}
<\infty ,
\end{equation}
\begin{equation}\label{b-averageandbound0limit}
\lim_{M\rightarrow \infty }\hspace{-.1cm}\sup_{\tiny \begin{array}{c}I,J
\in {\mathcal D}_{M}^{c}\\ (I,J)\in \mathcal F_{M}
\end{array}}
B\!F(I,J)
\Big(\frac{[b_{1}]_{I,q_{1}}}{|\langle b_{1}\rangle_{I}|}\Big)^{2}\Big(\frac{[b_{2}]_{J,q_{2}}}{|\langle b_{2}\rangle_{J}|}\Big)^{2}
=0,
\end{equation}
with $q_{1}^{-1}+q_{2}^{-1}<1$, and the supremum in \eqref{b-averageandbound0limit} is calculated over the family $\mathcal F_{M}$ of ordered pairs of cubes $I,J\in {\mathcal D}_{M}^{c}$ such that either $\ell(I\smland J)>2^{M}$,
$\ell(I\smland J)<2^{-M}$, or $\rdist(\langle I,J\rangle ,\mathbb B)>M^{\theta}$ for some $\theta \in (0,1)$. 

\end{definition}
Lemmata \ref{smallF} and \ref{Planche} justify the feasibility of Definition \ref{accretive}, in particular, equality \eqref{b-averageandbound0limit}.

\begin{definition}[Compatible testing functions 2]\label{accretive2}
We say that $b_{1},b_{2}$ are testing functions compatible with $T_{b}$ if
\begin{equation}\label{b-averageandbound02}
\sup_{\tiny \begin{array}{c}I,J\in {\mathcal D}
\end{array}}
B\!F(I,J)
\Big(\Big(\frac{[b_{1}]_{I_{p},q_{1}}}{|\langle b_{1}\rangle_{I}||\langle b_{1}\rangle_{I_{p}}|}\Big)^{2}+\Big(\frac{[b_{2}]_{J_{p},q_{2}}}{|\langle b_{2}\rangle_{J}||\langle b_{2}\rangle_{J_{p}}|}\Big)^{2}
\Big)
<\infty ,
\end{equation}
\begin{equation}\label{b-averageandbound0limit2}
\lim_{M\rightarrow \infty }\hspace{-.3cm}
\sup_{\tiny \begin{array}{c}I,J
\in {\mathcal D}_{M}^{c}\\ (I,J)\in \mathcal F_{M}
\end{array}}
\hspace{-.5cm} 
B\!F(I,J)
\Big(\Big(\frac{[b_{1}]_{I_{p},q_{1}}}{|\langle b_{1}\rangle_{I}||\langle b_{1}\rangle_{I_{p}}|}\Big)^{2}\hspace{-.1cm} +\Big(\frac{[b_{2}]_{J_{p},q_{2}}}{|\langle b_{2}\rangle_{J}||\langle b_{2}\rangle_{J_{p}}|}\Big)^{2}
\Big)
\! =\! 0,
\end{equation}
\end{definition}

\section{Statement of the main results}\label{statemainres}

\subsection{Main result on compactness}

\begin{theorem}\label{Mainresult2}
Let $1<p<\infty$ and
$T$ be a linear operator associated with a standard Calder\'on-Zygmund kernel $K$. 

Then $T$ extends compactly on $L^p(\mathbb R^{n})$ if and only if $K$ 
is a compact Calder\'on-Zygmund kernel
and there exist functions $b_{1},b_{2}$ \hyperref[accretive]{compatible} with $T$ so that $T$
satisfies
the weak compactness condition
and 
$Tb_{1}, T^{*}b_{2}\in \CMO_{b}(\mathbb R^{n})$.

If we assume that $b_{1},b_{2}\in L^{\infty }(\mathbb R^{n})$ and that they are compatible with $T_{b}$,
then the same three conditions characterize compactness of $T_{b}$.

\end{theorem}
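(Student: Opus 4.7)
The plan is to establish the biconditional in two directions, with sufficiency being the substantive one and closely following the compact $T1$ framework of \cite{V} adapted to the $b$-adapted, non-accretive wavelet setting. For necessity, assuming $T$ is compact on $L^{p}(\mathbb R^{n})$, each of the three conditions is extracted by testing $T$ on suitable pairs: the pointwise kernel smoothness/decay \eqref{LSD} comes from evaluating $\langle Tf,g\rangle$ on bump pairs with varying scale, position and separation; the weak compactness bound \eqref{restrictcompact2} comes from applying $T$ to characteristic-function pairs and using approximation by finite-rank operators to recover the vanishing limits of $F_W$; and $Tb_1, T^{*}b_2\in\CMO_b(\mathbb R^{n})$ follows by $H^{1}_b$--$\BMO_b$ duality, the vanishing limit in Lemma \ref{lem:cmochar} corresponding to norm convergence of the finite-rank approximations.

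For sufficiency, the first step is a reduction. Standard interpolation between the a priori $L^p$ boundedness and $L^2$ compactness reduces the task to proving compactness on $L^2$. Using the decomposition $T=P_M^{*}T+{P_M^{*}}^\perp TP_M+{P_M^{*}}^\perp TP_M^\perp$ displayed after Remark \ref{actionproduct}, the first two summands are finite rank, so it suffices to show
$$
\sup_{\|f\|_2=\|g\|_2=1}\big|\langle {P_M^{*}}^\perp TP_M^\perp f,g\rangle\big|\longrightarrow 0 \qquad (M\to\infty).
$$
Expanding in the $b$-adapted wavelet systems $(\psi_I^{b_1})_I$, $(\psi_J^{b_2})_J$ of Definition \ref{defpsi}, this pairing becomes a bilinear sum over $(I,J)\in\mathcal D_M^{c}\times\mathcal D_M^{c}$ weighted by wavelet coefficients. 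By almost-orthogonality of $b$-adapted wavelets and a Schur argument, the task reduces to bounding each matrix entry $\langle T\psi_I^{b_1},\psi_J^{b_2}\rangle$ by the product $B\!F(I,J)\,[b_1]_{I,q_1}[b_2]_{J,q_2}|\langle b_1\rangle_I|^{-1}|\langle b_2\rangle_J|^{-1}$ times a Schur-summable geometric factor, and to show the tail vanishes as $M\to\infty$.

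The matrix estimate splits by geometry of $(I,J)$ into three regimes, in the classical $Tb$ style. First, when $I$ and $J$ are well separated (large $\rdist(I,J)$ or small $\ec(I,J)$), the integral representation \eqref{kernelrep} applies; mean-zero cancellation of the smaller wavelet combined with the kernel smoothness \eqref{LSD} yields the factor $\ec(I,J)^{n/2+\delta}\rdist(I,J)^{-n-\delta}\tilde F_K(I\smland J,I\smland J,\langle I,J\rangle)$. Second, when $I\simeq J$ with bounded relative distance, the weak compactness condition \eqref{restrictcompact2} applied to the constant-sign pieces of $\psi_I^{b_1}, \psi_J^{b_2}$ delivers the bound $F_W(I;M)+\epsilon$. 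Third, in the nested regime $J\subset 3I$ with $\ell(J)\ll\ell(I)$, one performs a $b$-adapted paraproduct decomposition $\psi_I^{b_1}=\langle b_1\rangle_I^{-1}|I|^{-1/2}b_1\chi_I+r_I$: the first piece reduces $\langle T\psi_I^{b_1},\psi_J^{b_2}\rangle$ to a coefficient of $Tb_1$ paired against $\psi_J^{b_2}$, controlled by $Tb_1\in\CMO_b(\mathbb R^{n})$ via Lemma \ref{lem:cmochar}, while the remainder $r_I$ inherits mean-zero cancellation and is absorbed into the first two regimes.

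The main technical obstacle is the third regime. Without accretivity, $|\langle b_i\rangle_I|$ can be arbitrarily small, so the paraproduct splitting introduces potentially unbounded factors $C_I^{b_i}$, $B_{I,q_i}^{b_i}$; these are precisely what the compatibility product $B\!F(I,J)$ in Definition \ref{accretive} is designed to absorb, and it is the vanishing limit \eqref{b-averageandbound0limit} that forces the tail of the Schur sum to be negligible. The restriction to pairs in $\mathcal F_M$, justified by Lemmata \ref{smallF} and \ref{Planche}, exactly matches the parameter regions on which the decay of $\tilde F_K$ and $F_W$ is active, while complementary pairs are absorbed by the finite-rank removal implicit in the lagom projection. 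For the second statement on $T_b$ with $b_i\in L^\infty$, the same scheme applies with the multiplication operators $M_{b_i}$ absorbed into the wavelet decomposition: the relevant normalizations shift from $[b_i]_{I,q_i}/|\langle b_i\rangle_I|$ to $[b_i]_{I_p,q_i}/(|\langle b_i\rangle_I||\langle b_i\rangle_{I_p}|)$, matching precisely conditions \eqref{b-averageandbound02}--\eqref{b-averageandbound0limit2}.
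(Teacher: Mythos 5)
Your overall plan --- reduce to $L^2$, remove the finite-rank lagom pieces, expand in the $b$-adapted wavelet systems, and control the bilinear matrix by geometric regime --- matches the paper's, but you diverge in the nested regime. You propose to handle the $Tb_1$-paraproduct contribution \emph{inside} the matrix estimate, controlling the $\langle Tb_1,\psi_J^{b_2}\rangle$ coefficients by $\CMO_b$ and Carleson embedding; the paper instead first builds compact paraproduct operators $\Pi_{Tb_1},\Pi_{T^{*}b_{2}}^{*}$ (Proposition \ref{paraproducts1}) so that $\tilde{T}=T-\Pi_{Tb_1}-\Pi_{T^{*}b_{2}}^{*}$ satisfies $\tilde{T}b_1=\tilde{T}^{*}b_2=0$, and then runs the matrix estimate (Proposition \ref{twobumplemma1} and Theorem \ref{L2bounds}) for the reduced operator, where the nested-regime $\langle Tb_1,\cdot\rangle$ term simply vanishes. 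Both organizations are standard; the paper's buys a cleaner matrix estimate at the cost of proving that $\Pi_{Tb_1}$ is itself compact with a perfect compact Calder\'on--Zygmund kernel, while yours avoids constructing the paraproduct as an operator but must carry the Carleson-embedding bookkeeping and the lagom cutoff through the matrix sum (which is exactly the content of Lemma \ref{Carleson} plus the proof of Proposition \ref{paraproducts1} in disguise).

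There is, however, a genuine gap in your nested-regime step as written. Setting $\psi_I^{b_1}=\langle b_1\rangle_I^{-1}|I|^{-1/2}b_1\chi_I+r_I$ and unwinding Definition \ref{Haar} gives $r_I=-|I|^{1/2}|I_p|^{-1}\langle b_1\rangle_{I_p}^{-1}b_1\chi_{I_p}$: a single-signed bump on $I_p\supset J$ with neither mean-zero cancellation nor support separation from $J$, so it cannot be ``absorbed into the first two regimes'' --- it is just a second paraproduct-type term. The decomposition that actually closes (and is what the paper implements via $\tilde h_I$ in case (b) of the proof of Proposition \ref{twobumplemma1}) exploits that $h_I^{b_1}$ is constant on the child $I'\in\child(I_p)$ containing $J_p$: split $\psi_I^{b_1}$ into $h_I^{b_1}(c(J_p))\,b_1\chi_{I'}$, which produces the $\langle Tb_1,\psi_J^{b_2}\rangle$ coefficient together with a kernel-decay error term from $b_1\chi_{(I')^{c}}$, and a remainder supported in $I_p\setminus I'$, whose pairing against $\psi_J^{b_2}$ uses the separation and the mean-zero of $\psi_J^{b_2}$ (not of the remainder). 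Until that is fixed, the third regime of your Schur estimate does not go through.
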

Given a domain 
$\Omega \subset \mathbb R^{d+1}$ and $S=\partial \Omega $, 
the Double Layer Potential operator $\mathcal K $ is defined as follows
$$
\mathcal K (f)(x)=\lim_{\epsilon \rightarrow 0}\int_{S\cap B(x,\epsilon )^{c}}f(y)
\Big\langle \nu(y), \frac{x-y}{|x-y|^{d}}\Big\rangle d\sigma (y)
$$
where $\nu $ is the exterior unit normal vector to the surface $S$ and $\sigma $ is the surface measure on $S$. 
Then Theorem \ref{Mainresult2} 
proves compactness of $\mathcal K $ 
for a large class of domains $\Omega $.

%
%
%


\subsection{The result on boundedness}
Theorem \ref{Mainresult2} 
can also
characterize bounded operators with a standard Calder\'on-Zygmund kernel, just 
omitting the considerations of limits in \eqref{limits} going to zero. 
For example, we can consider a kernel $K$ satisfying inequality \eqref{smoothcompactCZ} 
with auxiliary function $F_{K}$ defined only by the function $S$, without $L$ and $D$. Despite the associated operator cannot be compact, it might be bounded and, in that case, the testing functions used to check its boundedness do not need to satisfy the accretivity condition for small cubes. In this line, Corollary \ref{Mainresultrestrictedbddness1} describes when boundedness of singular integral operators can be checked by means of non-accretive testing functions. 

The following result holds:
\begin{corollary}\label{Mainresultrestrictedbddness1}
Let $1<p<\infty$ and
$T$ be a continuous linear operator associated with a standard Calder\'on-Zygmund kernel. 

Then $T$ extends boundedly  on $L^p(\mathbb R^{n})$ if and only 
if there exist functions $b_{1},b_{2}$ compatible with $T$ and 
such that $T$
satisfies
the weak boundedness condition and $Tb_{1},T^{*}b_{2}\in \BMO_{b}(\mathbb R^{n})$. 

If $b_{1},b_{2}\in L^{\infty }(\mathbb R^{n})$ and they are compatible with $T_{b}$,
then the same two conditions characterize boundedness of $T_{b}$.
\end{corollary}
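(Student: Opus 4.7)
The plan is to derive Corollary \ref{Mainresultrestrictedbddness1} directly from the proof of Theorem \ref{Mainresult2}, viewing boundedness as the qualitative shadow of compactness: every quantitative decay in the compact argument is replaced by a uniform bound, and the same wavelet bookkeeping closes up. Conceptually, if one plugs into Theorem \ref{Mainresult2} functions $L, S, D$ that are merely bounded (failing the limits in \eqref{limits}), the conclusion weakens from compactness to boundedness, and the hypotheses correspondingly degrade from $\CMO_b$, weak compactness, and \eqref{b-averageandbound0limit} to $\BMO_b$, weak boundedness, and \eqref{b-averageandbound0}.

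For sufficiency, I would run the wavelet expansion of $\langle T(b_1 f), b_2 g\rangle$ from the proof of Theorem \ref{Mainresult2} against the $b$-adapted systems $\psi_I^{b_1}, \psi_J^{b_2}$, and split the sum into: (i) pieces where $I, J$ have separated supports, controlled by the kernel bound \eqref{LSD} where $F_K$ is now merely bounded and the bare Calder\'on-Zygmund decay $|t-x|^{-n-\delta}$ suffices for summability; (ii) adjacent-support and paraproduct pieces, absorbed by pairing against $Tb_1, T^*b_2$ through the $H^1_b$--$\BMO_b$ duality implicit in Definitions \ref{CMObdef} and \ref{defH1dual}, invoking only the sup clause \eqref{BMOb2} and not its $\CMO_b$-refinement from Lemma \ref{lem:cmochar}; (iii) the diagonal $I=J$ term, handled by the weak boundedness condition (the $F_W(I;M)$ factor is now allowed to stay bounded). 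Compatibility \eqref{b-averageandbound0} guarantees that the product of $b$-averages with $BF(I,J)$ is uniformly bounded, so the stronger \eqref{b-averageandbound0limit} is never used.

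For necessity, set $b_1 = b_2 = 1$: then $\langle b_i\rangle_I = [b_i]_{I,q_i} = 1$, the weights $\alpha_{I,J}$ in Definition \ref{CMObdef} collapse to universal constants, and $\BMO_b(\mathbb R^n) = \BMO(\mathbb R^n)$. Compatibility becomes a uniform bound on $BF(I,J)$, which follows from the standard Calder\'on-Zygmund hypothesis on $K$; the weak boundedness condition reduces to $|\langle T\chi_I, \chi_I\rangle| \lesssim |I|$, a routine consequence of $L^p$-boundedness of $T$; and the membership $T1, T^*1 \in \BMO$ is the conclusion of the classical $T1$ theorem of David-Journ\'e applied to the bounded $T$. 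The $T_b$ statement follows in the same way once we note that for $b_i \in L^\infty$ the multipliers $M_{b_i}$ are bounded on $L^p$, and that \eqref{b-averageandbound02} with parent-cube averages is precisely the transfer of \eqref{b-averageandbound0} under $T \mapsto T_b$.

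The main obstacle is not a new estimate but the bookkeeping: the proof of Theorem \ref{Mainresult2} is dispersed across several sections and each wavelet estimate uses the decay to zero of $F_K$, $F_W$, and of the $\CMO_b$-tail in an essential way for the compactness conclusion. One must audit each such step and verify that, after dropping the smallness clause, what remains is a geometric sum controlled by compatibility \eqref{b-averageandbound0} alone. I expect the most delicate step to be the paraproduct bound, where the $H^1_b$--$\BMO_b$ duality underlying Definition \ref{defH1dual} must be extracted independently of the $\lim$-to-zero refinement in Lemma \ref{lem:cmochar}; once the two clauses of that characterization are isolated, the remainder of the argument is a systematic replacement of ``$\epsilon$'' by ``$O(1)$'' throughout.
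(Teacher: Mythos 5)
Your proposal is correct and is essentially the paper's own argument: the paper gives no separate proof of Corollary \ref{Mainresultrestrictedbddness1}, stating only that it follows from the proof of Theorem \ref{Mainresult2} by ``omitting the considerations of limits in \eqref{limits} going to zero,'' which is exactly your replacement of every decaying factor ($F_K$, $F_W$, the $\CMO_b$-tail, and condition \eqref{b-averageandbound0limit}) by a uniform bound, with necessity obtained in the standard way (e.g.\ via $b_1=b_2=1$ and the classical $T1$ facts). The one step worth flagging in your audit is Lemma \ref{orthorep}, where the paper uses the limit clause to make the truncation error $S_1+S_2$ small and thereby obtain the identity \eqref{generaldec}; for boundedness one instead keeps $S_1+S_2\lesssim \|f\|_2\|g\|_2$ and adds it to the uniformly bounded truncated sum, which is consistent with your ``$\epsilon$ to $O(1)$'' scheme.
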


Corollary \ref{Mainresultrestrictedbddness1} can be applied to prove boundedness of the double and single layer potential operators associated with boundary value problems for degenerate elliptic equations in divergence form, ${\rm div} (A\nabla u)-V\cdot u = 0$, with appropriate non-negative potentials. In \cite{BEO} it is shown that the Riesz potentials associated with these equations have kernels that decay for large cubes or cubes that are away from the origin, but not for small cubes. This would correspond to the case in which the functions $L$ and $D$ tend to zero, but not the function $S$.

\section{On compact Calder\'on-Zygmund kernels}\label{interlude}
In this section we describe some properties of the auxiliary functions $L$, $S$, $D$, and $F$ of Definition \ref{LSDF}. 

We first note that, without loss of generality, $L$ and $D$ can be assumed to be non-creasing
while $S$ can be assumed to be non-decreasing. These assumptions imply analog properties to $F$ 
and $\tilde F$.
Regarding the equivalent formulation \eqref{LSD} given after Definition \ref{prodCZoriginal}, 
we note that 
in the next lemma and forthcoming results we will often consider the particular case when $t'=t$ and $x'=c(J)$:
$$
F_{K}(t,x,c(J))=L(|t-c(J)|_{\infty })S(|x-c(J)|_{\infty })
D\Big(1+\frac{|t+c(J)|_{\infty }}{1+|t-c(J)|_{\infty }}\Big) .
$$

\begin{lemma}\label{boundtoFK}
For 
$I,J\in \mathcal C$, we write
$\Delta_{I,J}=\{ t\in \mathbb R^{n}: \ell(\langle I,J\rangle)/2<|t-c(J)|_{\infty }\leq \ell(\langle I,J\rangle)\}$.
Then, for $t\in I\cap \Delta_{I,J}$ and $x\in J$, we have
\begin{align*}
F_{K}(t,x,c(J))&\lesssim F_{K}(\langle I,J\rangle, J,\langle I,J\rangle ) 
\end{align*}
with 
$
F_{K}(\langle I,J\rangle, J,\langle I,J\rangle ) 
=L(\ell(\langle I,J\rangle))S(\ell(J))
D(\rdist(\langle I,J\rangle ,\mathbb B))
$.
\end{lemma}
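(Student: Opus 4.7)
The plan is to decompose $F_{K}(t,x,c(J))$ into its three factors $L$, $S$, $D$ and bound each one separately against the corresponding factor of $F_{K}(\langle I,J\rangle,J,\langle I,J\rangle)$, invoking the monotonicity of $L,S,D$ noted at the start of this section (with $L,D$ non-increasing and $S$ non-decreasing) and the convention that universal multiplicative constants in the arguments of $L,S,D$ may be absorbed.

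The first two factors are easy. For $L$: the defining condition of $\Delta_{I,J}$ gives $|t-c(J)|_{\infty}>\ell(\langle I,J\rangle)/2$, so monotonicity yields $L(|t-c(J)|_{\infty})\leq L(\ell(\langle I,J\rangle)/2)\lesssim L(\ell(\langle I,J\rangle))$. For $S$: since $x\in J$, we have $|x-c(J)|_{\infty}\leq \ell(J)/2$, so monotonicity yields $S(|x-c(J)|_{\infty})\leq S(\ell(J))$.

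The remaining step, which I expect to be the main obstacle, is showing
$$1+\frac{|t+c(J)|_{\infty}}{1+|t-c(J)|_{\infty}}\gtrsim \rdist(\langle I,J\rangle,\mathbb B)\approx 1+\frac{|c(\langle I,J\rangle)|_{\infty}}{\max(1,\ell(\langle I,J\rangle))},$$
after which monotonicity of $D$ gives the third factor estimate. To obtain this, I write $t+c(J)=2c(\langle I,J\rangle)+(t-c(\langle I,J\rangle))+(c(J)-c(\langle I,J\rangle))$ and use $t,c(J)\in \langle I,J\rangle$ together with the reverse triangle inequality to get $|t+c(J)|_{\infty}\geq 2|c(\langle I,J\rangle)|_{\infty}-\ell(\langle I,J\rangle)$; combined with $|t-c(J)|_{\infty}\leq \ell(\langle I,J\rangle)$ coming from $t\in \Delta_{I,J}$, this yields
$$1+\frac{|t+c(J)|_{\infty}}{1+|t-c(J)|_{\infty}}\geq 1+\frac{(2|c(\langle I,J\rangle)|_{\infty}-\ell(\langle I,J\rangle))_{+}}{1+\ell(\langle I,J\rangle)}.$$
A brief case distinction on whether $2|c(\langle I,J\rangle)|_{\infty}$ exceeds $\ell(\langle I,J\rangle)$, and secondarily on whether $\ell(\langle I,J\rangle)\geq 1$, reduces this to the desired comparison up to a universal factor. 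Multiplying the three factor estimates then gives the lemma.
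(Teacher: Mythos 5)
Your argument is correct, and for the crucial $D$-factor it follows a genuinely different and somewhat more direct route than the paper's. The paper argues through the center $c(J)$: it first shows $1+\tfrac{|c(J)|_{\infty}}{1+\ell(\langle I,J\rangle)}\lesssim 1+\tfrac{|t+c(J)|_{\infty}}{1+|t-c(J)|_{\infty}}$ using $2|c(J)|_{\infty}\le |t-c(J)|_{\infty}+|t+c(J)|_{\infty}$, and then, separately, bounds $1+\tfrac{|c(J)|_{\infty}}{1+\ell(\langle I,J\rangle)}$ from below by $\rdist(\langle I,J\rangle,\mathbb B)$ via the chain $|c(I)|_{\infty}-|c(J)|_{\infty}\le\ell(\langle I,J\rangle)$, then the midpoint $(c(I)+c(J))/2$, then $c(\langle I,J\rangle)$. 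Your proposal bypasses the intermediaries $c(I)$, $c(J)$, and the midpoint entirely by writing $t+c(J)=2c(\langle I,J\rangle)+(t-c(\langle I,J\rangle))+(c(J)-c(\langle I,J\rangle))$ and using that both $t$ and $c(J)$ lie in $\langle I,J\rangle$, arriving directly at $|t+c(J)|_{\infty}\ge 2|c(\langle I,J\rangle)|_{\infty}-\ell(\langle I,J\rangle)$. The remaining elementary case analysis (on $2|c(\langle I,J\rangle)|_{\infty}\gtrless \ell(\langle I,J\rangle)$ and $\ell(\langle I,J\rangle)\gtrless 1$), which you leave as a sketch, does close up: in the nontrivial case $2|c(\langle I,J\rangle)|_{\infty}>\ell(\langle I,J\rangle)$ one gets $1+\tfrac{2|c(\langle I,J\rangle)|_{\infty}-\ell}{1+\ell}=\tfrac{1+2|c(\langle I,J\rangle)|_{\infty}}{1+\ell}\gtrsim 1+\tfrac{|c(\langle I,J\rangle)|_{\infty}}{\max(1,\ell)}$ with absolute constants. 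Your route is a bit shorter and more transparent; the paper's route has the minor advantage that it establishes the auxiliary inequality \eqref{boundingD} in a form that is reused elsewhere in the paper (e.g., in Lemma \ref{definecmo} and Lemma \ref{orthorep}, where the authors say ``by the proof of Lemma \ref{boundtoFK}''), whereas your argument is tailored to this statement alone.
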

\proof 
Since $L$ is non-creasing and $S$ is non-decreasing, 
$|t-c(J)|_{\infty }>\ell(\langle I,J\rangle )/2$, $|x-c(J)|_{\infty}\leq \ell(J)/2$, 
we only need to bound the factor $D$. 

For all $t\in I$, we have $|t-c(J)|_{\infty }\leq \diam(I\cup J)=\ell(\langle I,J\rangle )$. Then, 
using
$
|c(J)|_{\infty }\leq (|t-c(J)|_{\infty }+|t+c(J)|_{\infty })/2
$, 
we get
\begin{align}\label{boundingD}
1+&\frac{|c(J)|_{\infty }}{1+\ell(\langle I,J\rangle )}
\leq 1+\frac{|c(J)|_{\infty }}{1+|t-c(J)|_{\infty }}
\leq \frac{3}{2}\Big(1+\frac{|t+c(J)|_{\infty }}{1+|t-c(J)|_{\infty }}\Big).
\end{align}

Now,
since $|c(I)|_{\infty }-|c(J)|_{\infty }\leq |c(I)-c(J)|_{\infty }\leq \ell(\langle I,J\rangle )$, we bound below
the numerator in the left hand side of \eqref{boundingD} as follows:
\begin{align*}
1+\ell(\langle I,J\rangle )+|c(J)|_{\infty } 
&\geq 1+\frac{\ell(\langle I,J\rangle ) }{2}+\frac{|c(I)|_{\infty }-|c(J)|_{\infty }}{2}+|c(J)|_{\infty }
\\
&\geq \frac{1}{2}\big(1+\ell(\langle I,J\rangle )+\frac{1}{2}|c(I)+c(J)|_{\infty }\big).
\end{align*}
Therefore, 
\begin{align*}
1+\frac{|c(J)|_{\infty }}{1+\ell(\langle I,J\rangle )}
\geq \frac{1}{3}\Big(\frac{3}{2}+\frac{|c(I)+c(J)|_{\infty }/2}{1+\ell(\langle I,J\rangle )}\Big).
\end{align*}
Now, since $(c(I)+c(J))/2\in \langle I,J\rangle$, we have
$|(c(I)+c(J))/2-c(\langle I,J\rangle)|_{\infty }\leq \ell(\langle I,J\rangle)/2$
and so, we can bound below previous expression by
$$
\frac{1}{3}\Big(\frac{3}{2}+\frac{|c(\langle I,J\rangle )|_{\infty }}{1+\ell(\langle I,J\rangle )}-\frac{1}{2}\Big)
\geq \frac{1}{3}\Big(1+\frac{|c(\langle I,J\rangle )|_{\infty }}{2\max(\ell(\langle I,J\rangle),1)}\Big)
\gtrsim \rdist(\langle I,J\rangle ,\mathbb B ).
$$

Then, omitting constants and using that $D$ is non-creasing, we get
$$
F_{K}(t,x,c(J))\lesssim L(\ell(\langle I,J\rangle ))S(\ell(J))
D(\rdist(\langle I,J\rangle ,\mathbb B )).
$$

\section{Estimates near the diagonal}
In Lemma \ref{adjacentsquares}, we prove a Hardy's inequality for compact operators.
\begin{lemma}\label{adjacentsquarescoro} 
Let $1\leq q_{1},q_{2}\leq \infty $ such that $\frac{1}{q_{1}}+\frac{1}{q_{2}}<1$,
and let $K$ 
be a compact Calder\'on-Zygmund kernel. 
For every $I\in {\mathcal D}$ and every bounded functions $f,g $, 
\begin{align}\label{3IminusI}
|\langle T_{b}(f \chi_{(3I)\backslash I}),g\chi_I\rangle |
\lesssim 
|I|[fb_{1}]_{3I,q_{1}}[gb_{2}]_{I,q_{2}}
\tilde F_{K}(I)
\end{align}
where
$
\tilde F_{K}
$  is given in Definition \ref{LSDF}.
\end{lemma}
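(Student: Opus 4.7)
The plan is to reduce the pairing to a double integral, pull the slowly-varying factors of $F_K$ out of the integrand, and then invoke the Hardy-type inequality from Lemma \ref{adjacentsquares} to handle the diagonal singularity. Since $(3I)\setminus I$ and $I$ have disjoint interiors, the representation \eqref{kernelrep} yields
\[
\langle T_{b}(f\chi_{(3I)\setminus I}), g\chi_I\rangle=\int_I\!\!\int_{(3I)\setminus I}K(t,x)(fb_1)(t)(gb_2)(x)\,dt\,dx,
\]
and the pointwise decay \eqref{kerneldecay} provides $|K(t,x)|\lesssim F_K(t,x)/|t-x|_{\infty}^n$.

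For $(t,x)\in((3I)\setminus I)\times I$ one has $|t-x|_\infty \lesssim \ell(I)$, so the monotonicity of $S$ gives $S(|t-x|_\infty)\lesssim S(I)$. A computation analogous to that in the proof of Lemma \ref{boundtoFK} shows $1+|t+x|_\infty/(1+|t-x|_\infty)\gtrsim \rdist(I,\mathbb{B})$, and hence the non-creasing monotonicity of $D$ yields $D(\cdots)\lesssim D(I)\leq \tilde D(I)$. The factor $S(I)D(I)$ therefore pulls out of the integral, reducing the claim to an estimate of the form
\[
\int_I\!\!\int_{(3I)\setminus I}\frac{L(|t-x|_\infty)}{|t-x|_{\infty}^n}|fb_1|(t)|gb_2|(x)\,dt\,dx\lesssim |I|\,[fb_1]_{3I,q_1}[gb_2]_{I,q_2}\,\tilde L(I).
\]

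This last inequality is precisely the content of the Hardy-type Lemma \ref{adjacentsquares}: one decomposes $(3I)\setminus I$ and $I$ into dyadic shells of width $2^{-k}\ell(I)$ parallel to $\partial I$, applies $L(|t-x|_\infty)\lesssim L(2^{-k}\ell(I))$ by monotonicity on the $k$-th shell, and sums the resulting geometric series in $k$ to produce $\tilde L(I)=\sum_{k\geq 0}2^{-kn}L(2^{-k}\ell(I))$. The hypothesis $1/q_1+1/q_2<1$ is exactly what is needed at this stage: it leaves room for a third Hölder exponent $r>1$ with $1/r=1-1/q_1-1/q_2$, used to absorb the integrable remainder of the singular kernel on each shell and to promote the $q_1$- and $q_2$-averages of $fb_1$ and $gb_2$ to the averages over $3I$ and $I$ respectively. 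Assembling these pieces produces the desired bound $|I|[fb_1]_{3I,q_1}[gb_2]_{I,q_2}\tilde F_K(I)$.

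The main obstacle is exactly the blow-up of $L$ near the diagonal, which forces the appearance of $\tilde L$ rather than $L$ in the conclusion; this is already encapsulated in Lemma \ref{adjacentsquares}, so the present argument is essentially a clean specialization of that inequality to the disjoint-support pair $(f\chi_{(3I)\setminus I},g\chi_I)$, combined with the geometric extraction of $S(I)$ and $D(I)$ described above.
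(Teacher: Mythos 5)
Your reduction (pull out $S$ at scale $\ell(I)$ and $D$ at $\rdist(I,\mathbb B)$, then face a Hardy-type bound for $\int_I\int_{(3I)\setminus I}L(|t-x|_{\infty})|t-x|_{\infty}^{-n}|fb_1(t)||gb_2(x)|\,dt\,dx$) is sound, and your shell strategy is a genuinely different route from the paper's. But the decisive estimate — summing the shells — is only asserted, and the way you invoke $1/q_1+1/q_2<1$ is exactly where a naive execution fails. On the shell $E_k=\{(t,x)\in((3I)\setminus I)\times I:|t-x|_{\infty}\approx 2^{-k}\ell(I)\}$ the kernel contributes $L(2^{-k}\ell(I))2^{kn}|I|^{-1}$, so everything hinges on how small $\int\!\!\int_{E_k}|fb_1||gb_2|$ is. Using the third exponent $r$ to ``absorb the singular kernel'' via $|E_k|^{1/r}$ against the global $L^{q_1}$ and $L^{q_2}$ norms gives a series that converges only when $1/q_1+1/q_2<1/(n+1)$; bounding $E_k$ by the product of the two boundary slabs of width $2^{-k}\ell(I)$ and promoting slab averages to averages over $3I$ and $I$ also fails for $n\geq 2$. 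What actually works is the product-space H\"older together with the section bounds: each $t$-section and each $x$-section of $E_k$ has measure $\lesssim 2^{-kn}|I|$ while $|E_k|\lesssim 2^{-k(n+1)}|I|^{2}$, which yields $\int\!\!\int_{E_k}|fb_1||gb_2|\lesssim 2^{-k(n+1-1/q_1-1/q_2)}|I|^{2}[fb_1]_{3I,q_1}[gb_2]_{I,q_2}$ and hence a shell contribution of order $2^{-k(1-1/q_1-1/q_2)}L(2^{-k}\ell(I))|I|[fb_1]_{3I,q_1}[gb_2]_{I,q_2}$, summable precisely because $1/q_1+1/q_2<1$. None of this bookkeeping appears in your write-up. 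Note also that the resulting weight is $2^{-k(1-1/q_1-1/q_2)}$, not $2^{-kn}$, so you obtain a larger regularization of $L$ than the $\tilde L$ of Definition \ref{LSDF}; it still satisfies the limits \eqref{limits}, but it is not literally the stated $\tilde F_K(I)$ (the paper has the same looseness, ending up with its own $\bar L$).

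For comparison, the paper does not use shells: it reduces \eqref{3IminusI} to Lemma \ref{adjacentsquares} by covering $(3I)\setminus I$ with the $3^{n}-1$ congruent neighbours of $I$, truncates the kernel to $K_{1/m}$ and takes a supremum in $m$ (a step you skip: since the supports touch, \eqref{kernelrep} does not directly identify the pairing with an absolutely convergent integral), and then estimates the iterated norm $\|K_{1/m}\|_{L^{q_2'}L^{q_1'}}$; that computation needs the stronger condition $n/q_1+1/q_2<1$ and the full range is recovered by symmetry and interpolation between the two endpoint cases. Your shell route, once the section-measure computation above is supplied, reaches the full range in one stroke and avoids the interpolation, so it is worth completing — but as written the reduced inequality is not ``precisely the content of Lemma \ref{adjacentsquares}'' (that lemma bounds the full pairing for two adjacent congruent cubes), and the sketch you give in its place contains the gap described above.
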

Previous lemma follows after proving the following result:
\begin{lemma}\label{adjacentsquares}
With the same hypotheses, let $I,I'\in {\mathcal D}$ be such that $\ell(I)=\ell(I')$ and $\dist(I,I')=0$. 
Let $f,g$ be integrable and compactly supported on $I$ and $I'$ respectively. Then
\begin{align}\label{adja}
|\langle T_{b}f,g\rangle |
\lesssim |I|  [fb_{1}]_{I,q_{1}}[gb_{2}]_{I',q_{2}}
\tilde F_{K}(I).
\end{align}
\end{lemma}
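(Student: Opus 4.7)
The plan is to start from the integral representation
\[
\langle T_b f, g\rangle = \iint_{I \times I'} f(t)\, g(x)\, K(t,x)\, b_1(t)\, b_2(x)\, dt\, dx,
\]
and combine it with the pointwise kernel bound $|K(t,x)| \lesssim F_K(t,x)/|t-x|_\infty^n$ (available from the compact Calder\'on-Zygmund structure, as recalled in \eqref{kerneldecay}). This reduces matters to a Hardy-type estimate on $I \times I'$, whose geometry is that of two cubes of equal side length sharing an $(n-1)$-dimensional face across which the kernel blows up.

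I would then decompose $I \times I'$ dyadically into shells
\[
A_k = \{(t,x) \in I \times I' : |t-x|_\infty \sim 2^{-k}\ell(I)\}, \qquad k\geq 0.
\]
On $A_k$, monotonicity of $S$ (non-decreasing) yields $S(|t-x|_\infty) \leq S(I)$; Lemma \ref{boundtoFK} and the remarks following Notation \ref{LSDF} control the $D$-factor by $\tilde D(I)$; and the $L$-factor is kept as $L(2^{-k}\ell(I))$. Together with $|t-x|_\infty^{-n} \lesssim 2^{kn}/\ell(I)^n$, this produces
\[
|K(t,x)| \lesssim L(2^{-k}\ell(I))\, S(I)\, \tilde D(I)\, \cdot 2^{kn}/\ell(I)^n \quad \text{on } A_k.
\]

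The backbone of the estimate on each shell is H\"older's inequality applied to the three factors $fb_1,\, gb_2,\, K$ with exponents $q_1, q_2, r$ satisfying $1/q_1+1/q_2+1/r=1$; the hypothesis $1/q_1+1/q_2<1$ is exactly what makes $r<\infty$ (and, crucially, what guarantees summability of the resulting series). To obtain the sharp dyadic weight I would exploit the product structure of the slice $A_k(t)$: for fixed $t$ with $\delta(t)\lesssim 2^{-k}\ell(I)$, this slice is contained in a ball of radius $\sim 2^{-k}\ell(I)$, so the $q_2$-maximal-function bound gives
\[
\int_{A_k(t)} |gb_2(x)|\,dx \lesssim (2^{-k}\ell(I))^n\, M_{q_2}(gb_2)(t),
\]
and symmetrically for $fb_1$. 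Pairing this with the kernel bound and integrating in $t$ via H\"older, the $k$-exponents coming from the slice volumes combine with those from the kernel to produce a per-shell estimate of the form
\[
\iint_{A_k}|fb_1\, gb_2\, K|\,dt\,dx \lesssim 2^{-kn}L(2^{-k}\ell(I))\, |I|\, [fb_1]_{I,q_1}[gb_2]_{I',q_2}\, S(I)\tilde D(I).
\]
Summing over $k\geq 0$ and recognising $\sum_{k\geq 0}2^{-kn}L(2^{-k}\ell(I))=\tilde L(I)$ yields the stated bound.

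The main obstacle is precisely the production of the sharp weight $2^{-kn}$ on each shell. A naive three-exponent H\"older argument on $A_k$ produces only the weaker weight $2^{-k/r}=2^{-k(1-1/q_1-1/q_2)}$, which for $n\geq 2$ is strictly larger than $2^{-kn}$ and would not reconstruct $\tilde L(I)$. Recovering the correct exponent requires the maximal-function reduction above, whose absorption into $L^{q_i}$ averages without logarithmic losses is made possible exactly by $q_1'>q_2$ (equivalently $1/q_1+1/q_2<1$), so that $M_{q_2}$ is bounded on $L^{q_1'}$. A related delicate point is keeping track of the boundary strip $\{\delta(t)\lesssim 2^{-k}\ell(I)\}$ on which $A_k(t)$ is non-empty; this is handled by a final H\"older application in $t$ that converts the integral of $|fb_1|$ against the maximal function into the product $|I|\,[fb_1]_{I,q_1}[gb_2]_{I',q_2}$, completing the proof.
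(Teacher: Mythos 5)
Your shell decomposition is a reasonable alternative to the paper's argument, but the quantitative heart of your proof --- the per-shell weight $2^{-kn}$ --- is arithmetically impossible for $n\geq 2$, and no maximal-function device can produce it. Count measures: since $\dist(I,I')=0$ and $\ell(I)=\ell(I')$, the set of $t\in I$ for which the slice $A_k(t)$ is nonempty is a slab $E_k$ of measure $\sim 2^{-k}|I|$, and each slice has measure $\lesssim 2^{-kn}|I|$, so $|A_k|\lesssim 2^{-k(n+1)}|I|^2$. Taking $fb_1=gb_2=1$ and $L=S=D=1$, the shell integral is $\sim 2^{kn}|I|^{-1}\cdot|A_k|\sim 2^{-k}|I|$, not $2^{-kn}|I|$; the only gain is the single power $2^{-k}$ coming from $|E_k|$. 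With general $L^{q_1},L^{q_2}$ densities, which may concentrate on $E_k$, H\"older dilutes even this to $2^{-k(1-1/q_1-1/q_2)}$ --- precisely the exponent you dismiss as "weaker" and then claim to upgrade. The upgrade is false, so the step "the $k$-exponents combine to produce $2^{-kn}$" is a genuine gap. The irony is that the exponent you reject is all you need: since $1/q_1+1/q_2<1$, the series $\sum_{k\geq0}2^{-k(1-1/q_1-1/q_2)}L(2^{-k}\ell(I))$ converges and, by dominated convergence, still vanishes as $\ell(I)\to\infty$. So your decomposition does prove a bound of the form $|I|\,[fb_1]_{I,q_1}[gb_2]_{I',q_2}\,\hat L(I)S(I)\tilde D(I)$ with $\hat L$ satisfying the first limit in \eqref{limits} --- but $\hat L$ majorizes $\tilde L$ (strictly for $n\geq2$), so you do not literally obtain the stated $\tilde F_K(I)$ of Definition \ref{LSDF}, which matters insofar as $\tilde F_K$ feeds into the compatibility conditions of Definition \ref{accretive}.

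For comparison, the paper avoids shells altogether: it splits each of $I,I'$ into a $\theta$-core and $\theta$-collar, with $\theta$ chosen so that $S(\theta)\leq\tilde F(I)$ (handling the near-diagonal region by the smallness of $S$ at $0$ rather than by summing a series), and estimates the off-collar part by computing the mixed norm $\|K\chi_{I\times I'}\|_{L^{q_2'}L^{q_1'}}$ via level sets of $|t-x|_\infty$ and of $\dist_\infty(x,I)$. That computation requires the stronger condition $\frac{n}{q_1}+\frac{1}{q_2}<1$ (or its symmetric counterpart), and the full range $\frac{1}{q_1}+\frac{1}{q_2}<1$ is recovered by interpolating between the two endpoint cases. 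Your route, once the exponent is corrected, has the merit of treating all admissible $(q_1,q_2)$ directly without interpolation; but as written the proof hinges on a false per-shell estimate and must be repaired as above.
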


\proof[Proof (of Lemma \ref{adjacentsquarescoro})] 
We first assume $\frac{n}{q_{1}}+\frac{1}{q_{2}}<1$. 
We 
define the kernels $K_{\epsilon }(t,x)=K(t,x)$ if $|t-x|_{\infty }>\epsilon $ and zero otherwise. Then 
\begin{align*}
|\langle T_{b}f,g\rangle |&\leq 
\sup_{m>0}\int_{I'}\int_{I}
|K_{\frac{1}{m}}(t,x)||f(t) b_{1}(t)||g(x) b_{2}(x)|dtdx
=\sup_{m>0}A_{m}.
\end{align*}



Let $\theta \in (0,1)$ such that $S(\theta )\leq \tilde F(I)$. We denote $I_{\theta ,0}=\theta I$, $I_{\theta ,1}=I\backslash I_{\theta }$ and similar for $I'$. 
Then
\begin{align*}
A_{m}
&\leq \sum_{i,j\in \{0,1\}}\int_{I'_{\theta ,j}} \int _{I_{\theta ,i}}
|K_{\frac{1}{m}}(t,x)||f(t)b_{1}(t)||g(x)b_{2}(x)|dtdx
\\
&\leq |I|^{\frac{1}{q_{1}}+\frac{1}{q_{2}}}[f b_{1}]_{I,q_{1}}[g b_{2}]_{I',q_{2}}
\, \,
\sum_{i,j\in \{0,1\}}\| K_{\frac{1}{m}}\chi_{I_{\theta,i}\times I'_{\theta ,j}}\|_{L^{q_{2}'}L^{q_{1}'}(\mathbb R^{2n})}
\end{align*}
From the kernel decay condition \eqref{kerneldecay}, we have 
\begin{equation}\label{IntBR}
\| K_{\frac{1}{m}}\chi_{I_{\theta,i}\times I'_{\theta ,j}}\|_{L^{q_{2}'}L^{q_{1}'}(\mathbb R^{2n})} \lesssim \Big(\int _{I'_{\theta ,j}}\Big(\int_{I_{\theta,i}}\frac{F_{K}(t,x)^{q'_{1}}}{|t-x|_{\infty }^{q'_{1}n}}dt\Big)^{\frac{q'_{2}}{q'_{1}}}dx\Big)^{\frac{1}{q'_{2}}}
\end{equation}
with $F_{K}(t,x)=L(|t-x|_{\infty })S(|t-x|_{\infty })D(1+\frac{|t+x|_{\infty }}{1+|t-x|_{\infty }})$.

When $i=j=1$, 
 we have $
0 <|t-x|_{\infty }\leq \theta$ in the domain of integration and so, 
by the reasoning in the proof of Lemma \ref{boundtoFK}, 
$
F_{K}(t,x)\lesssim
S(\theta )$.
Then the left hand side of \eqref{IntBR} can be bounded by
\begin{equation}\label{last2int}
S(\theta )\Big(\int _{I'}\Big(\int_{I}\frac{1}{|t-x|_{\infty }^{q'_{1}n}}dt\Big)^{\frac{q'_{2}}{q'_{1}}}dx\Big)^{\frac{1}{q'_{2}}}.
\end{equation}

In all remains cases, we have  
$
\theta <|t-x|_{\infty }\lesssim \ell(I)$, 
$|x-c(I)|_{\infty}\lesssim \ell(I)$ and 
$|(t+x)/2-c(I)|_{\infty}\lesssim \ell(I)$  in the domain of integration.
Then, by the proof of Lemma \ref{boundtoFK}, we have
$
F_{K}(t,x)\leq L(|t-x|_{\infty })
S(\ell(I))D(\rdist (I ,\mathbb B))
$.
With this and using H\"olders's inequaltiy for $\tilde q_{i}'=(1+\epsilon )q_{i}'$ with $\epsilon >0$ sufficiently small, the left hand side of \eqref{IntBR} can be bounded by
\begin{equation}\label{last2int2}
\bar{L}(\ell(I))S(\ell(I))D(\rdist (I ,\mathbb B))\Big(\int _{I'}\Big(\int_{I}\frac{1}{|t-x|_{\infty }^{\tilde q'_{1}n}}dt\Big)^{\frac{\tilde q'_{2}}{\tilde q'_{1}}}dx\Big)^{\frac{1}{\tilde q'_{2}}},
\end{equation}
where $$\bar{L}(\ell(I))= \Big(\int _{I'}\Big(\int_{I}L(|t-x|_{\infty })^{q'_{1}n(1+\epsilon^{-1})}dt\Big)^{\frac{q'_{2}}{q'_{1}}}dx\Big)^{\frac{1}{q'_{2}(1+\epsilon^{-1})}}$$ still satisfies the limit properties of \eqref{limits}. 
Now we work to bound the double integral in \eqref{last2int}, being the integral in \eqref{last2int2} very similar.

For
$\dist_{\infty }(x,I)\leq \rho \leq \dist_{\infty }(x,I)+\ell(I)$, 
we denote $J(x,\rho)=\{ t\in I_{\theta }: |t-x|_{\infty }=\rho\}$, which satisfies 
$|J(x,\rho)|\lesssim 2^{n}\rho^{n-1}$. Then, the double integral in \eqref{last2int} can be bounded by
\begin{align}\label{intr1}
\nonumber
\Big(\int _{I'}
&\Big(\int_{\dist_{\infty }(x,I)}^{\dist_{\infty }(x,I)+\ell(I)}
\int_{J(x,\rho)}\frac{1}{\rho^{q'_{1}n}}dud\rho\Big)^{\frac{q'_{2}}{q'_{1}}}dx\Big)^{\frac{1}{q'_{2}}}
\\
&\lesssim \Big(\int _{I'}
\Big(\int_{\dist_{\infty }(x,I)}^{\dist_{\infty }(x,I)+\ell(I)}
\frac{1}{\rho^{(q'_{1}-1)n+1}}d\rho\Big)^{\frac{q'_{2}}{q'_{1}}}dx\Big)^{\frac{1}{q'_{2}}}
\end{align}

We consider first $1< q_{1}<\infty $, for which \eqref{intr1} can be bounded by
$$
\Big(\int _{I'}\dist_{\infty }(x,I)^{-n\frac{q_{2}'}{q_{1}}}
dx\Big)^{\frac{1}{q_{2}'}}.
$$
For $0\leq \rho \leq \ell(I)$, we denote
$J_{\rho}'=
\{ x\in I': \dist_{\infty } (x,I)=\rho \}$, which satisfies 
$|J_{\rho}'|\lesssim \ell(I)^{n-1}$. 
Then, since 
$n\frac{q_{2}'}{q_{1}}<1$, 
we bound the last expression by a constant times
\begin{align*} 
\Big(\ell(I)^{n-1}\int _{0}^{\ell(I)}\rho^{-n\frac{q_{2}'}{q_{1}}}
d\rho\Big)^{\frac{1}{q_{2}'}}
&\lesssim
\Big(
\ell(I)^{n-1}\ell(I)^{1-n\frac{q_{2}'}{q_{1}}}
\Big)^{\frac{1}{q_{2}'}}
=
|I|^{\frac{1}{q_{2}'}+\frac{1}{q_{1}'}-1}. 
\end{align*} 

Finally, when $q_{1}=\infty $ and $q_{2}>1$, we use the same notation to bound the expression prior to 
\eqref{intr1} by 
\begin{align*} 
\Big(\int _{I'}&\Big|\log \Big(1+\frac{\ell(I)}{\dist_{\infty }(x,I)}\Big)\Big|^{q_{2}'}
dx\Big)^{\frac{1}{q_{2}'}}
\\
&\leq \Big(\ell(I)^{n-1}\int _{0}^{\ell(I)}\Big|\log\Big(1+\frac{\ell(I)}{\rho}\Big) \Big|^{q'_{2}}
d\rho\Big)^{\frac{1}{q'_{2}}}
\\
&\leq |I|^{\frac{1}{q'_{2}}}\Big(\int _{1}^{\infty }
|\log(1+\rho)|^{q_{2}'}\frac{1}{\rho^{2}}d\rho\Big)^{\frac{1}{q_{2}'}}
\lesssim 
|I|^{\frac{1}{q_{2}'}+\frac{1}{q_{1}'}-1}
\end{align*}

With all this and the choice of $\theta $, we get
\begin{align*}
A_{m}&\leq |I|^{\frac{1}{q_{1}}+\frac{1}{q_{2}}}[fb_{1}]_{I,q_{1}}[gb_{2}]_{I',q_{2}}
|I|^{\frac{1}{q_{2}'}+\frac{1}{q_{1}'}-1}(S(\theta )+\tilde F_{K}(I))
\\
&\lesssim |I| [fb_{1}]_{I,q_{1}}[gb_{2}]_{I',q_{2}}\tilde F_{K}(I)
\end{align*}

By symmetry, we have the same result under the assumption that $\frac{1}{q_{1}}+\frac{n}{q_{2}}<1$. Now, 
we can interpolate between the cases $q_{1,\alpha_{0}}>1$, $q_{2,\alpha_{0}}=\infty $ and $q_{1,\alpha_{1}}=\infty $, $q_{2,\alpha_{1}}>1$. This way we obtain the result for $\frac{1}{\bar{q}_{1}}+\frac{1}{\bar{q}_{2}}=\frac{1-\alpha }{q_{1,\alpha_{0}}}+\frac{\alpha }{q_{2,\alpha_{1}}}<1$.

\section{The definition of $Tb_{i}$}

Lemma \ref{definecmo} below defines $Tb_{1}$, $T^{*}b_{2}$ for locally integrable testing functions $b_{1},b_{2}$
as functionals in the dual of the subspace of $\mathcal C_{0}(\mathbb R^{n})$ of functions with mean zero with respect $b_{2}$ or $b_{1}$ respectively.

Then
the hypothesis that $Tb_{1}\in \BMO_{b}(\mathbb R^{n})$ means that 
$
|\langle T_{b}1, f\rangle |=|\langle Tb_{1}, b_{2}f\rangle |\leq C
$
holds for a dense subset of the unit ball of $H_{b}^{1}(\mathbb R^{n})$. 
Furthermore, the hypothesis
$Tb_{1}\in \CMO_{b}(\mathbb R^{n})$ means that
$$
 \lim_{M\rightarrow \infty} |\langle P_{M}^{\perp}T_{b}1,f\rangle |=0
$$
holds uniformly in a dense subset of the unit ball of $H_{b}^{1}(\mathbb R^{n})$. 
In particular, we verify this estimate for $f\in \mathcal C_{0}(\mathbb R^{n})$ with mean zero with respect $b_{2}$.
The necessity of 
$Tb_{1} \in \CMO_{b}(\mathbb R^{n})$ when $T$ is a compact operator appears in Proposition \ref{necessity2}.

\begin{lemma}\label{definecmo}
 
Let $T$ be a linear operator associated with a compact Calder\'on-Zygmund kernel $K$ with parameter
$0<\delta <1$. 
Let $b_{1},b_{2}$ be test functions compatible with $T$. 

Let $J\in \mathcal C$, $f$ locally integrable with support on $J$ and mean zero
with respect to $b_{2}$. Then 
the limit
${\mathcal L}_{b}(f)=\lim_{k\to \infty} \langle T_{b}\chi_{2^{k}J},f\rangle $ exists.

Moreover, for all $k\geq 2$
\begin{align}\label{errorterm}
|{\mathcal L}_{b}(f)- \langle T_{b}\chi_{2^{k}J} ,f\rangle |
&\lesssim 2^{-k\delta } 
|J|\langle M_{q_{1}}b_{1}\rangle_{2^{k}J}
[ b_{2}f]_{J,q_{2}}
F_{K}(I,J,I).
\end{align}

\end{lemma}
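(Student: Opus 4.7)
The plan is to prove the existence of the limit and the error estimate simultaneously via a telescoping argument. For $j \geq 2$ define the annulus $A_j = 2^{j+1}J \setminus 2^j J$, so that for $m > k \geq 2$,
$$\langle T_b\chi_{2^m J}, f\rangle - \langle T_b\chi_{2^k J}, f\rangle = \sum_{j=k}^{m-1} \langle T_b\chi_{A_j}, f\rangle.$$
Bounding each summand by $2^{-j\delta}$ times manageable factors will yield both the Cauchy property (so $\mathcal{L}_b(f)$ exists) and the quantitative tail estimate \eqref{errorterm}.

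For $j \geq 2$, the sets $A_j$ and $\supp f \subset J$ are separated (with $\dist(A_j, J) \gtrsim 2^j\ell(J)$), so Definition \ref{intrep} yields the kernel representation
$$\langle T_b\chi_{A_j}, f\rangle = \int_{A_j}\int_J K(t,x)\,b_1(t)b_2(x)f(x)\,dx\,dt.$$
Since $\int_J b_2 f\,dx = 0$ by hypothesis, we may freely subtract $K(t, c(J))$ from $K(t,x)$. For $t \in A_j$ and $x \in J$, we have $|x-c(J)|_\infty \leq \ell(J)/2$ and $|t-c(J)|_\infty \approx 2^j\ell(J)$, so the separation needed for the smoothness condition \eqref{LSD} holds (with $t'=t$, $x'=c(J)$), giving
$$|K(t,x) - K(t,c(J))| \lesssim \frac{\ell(J)^\delta}{(2^j\ell(J))^{n+\delta}}\, F_K(t,x,t,c(J)).$$
The argument behind Lemma \ref{boundtoFK}, applied with the pair $(2^{j+1}J, J)$, then bounds the $F_K$ factor uniformly on $A_j \times J$ by $F_K(2^{j+1}J, J, 2^{j+1}J)$.

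Applying H\"older's inequality in $x$ with exponents $(q_2, q_2')$ gives $\int_J |b_2 f|\,dx \leq |J|[b_2f]_{J,q_2}$, while the pointwise bound $|b_1(t)| \leq M_{q_1}b_1(t)$ a.e.\ yields $\int_{A_j}|b_1(t)|\,dt \leq |2^{j+1}J|\langle M_{q_1}b_1\rangle_{2^{j+1}J}$. Multiplying these and using $|2^{j+1}J|/(2^j\ell(J))^n \approx 1$ produces the single-term estimate
$$|\langle T_b\chi_{A_j}, f\rangle| \lesssim 2^{-j\delta}\, |J|\,\langle M_{q_1}b_1\rangle_{2^{j+1}J}\, [b_2f]_{J,q_2}\, F_K(2^{j+1}J, J, 2^{j+1}J).$$

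The main obstacle is summing over $j \geq k$ to recover \eqref{errorterm}. The factor $2^{-j\delta}$ decays strongly, but \emph{a priori} the averages $\langle M_{q_1}b_1\rangle_{2^{j+1}J}$ could grow in $j$; this is precisely what the compatibility hypothesis \eqref{b-averageandbound0} controls, since it bounds the product of such averages against $F_K$ uniformly in the scale. Combined with the monotonicity of $L$ and $D$ (which ensures $F_K(2^{j+1}J, J, 2^{j+1}J)$ does not grow), the tail series is dominated by a geometric series with common ratio $\leq 2^{-\delta}$, so that
$$\sum_{j \geq k}|\langle T_b\chi_{A_j}, f\rangle| \lesssim 2^{-k\delta}\, |J|\,\langle M_{q_1}b_1\rangle_{2^k J}\,[b_2f]_{J,q_2}\,F_K(2^k J, J, 2^k J),$$
which simultaneously gives the convergence of the sequence defining $\mathcal{L}_b(f)$ and the desired tail estimate \eqref{errorterm}.
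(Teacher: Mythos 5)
Your overall route is the paper's: telescope over the dyadic annuli $2^{j+1}J\setminus 2^{j}J$, use the kernel representation together with the mean zero of $b_{2}f$ to subtract $K(t,c(J))$, invoke the smoothness condition and Lemma \ref{boundtoFK}, and sum a geometric series. The single-annulus estimate you derive is correct (the paper uses a mixed $L^{q_1'}$--$L^{q_2'}$ H\"older bound on the kernel difference rather than your $L^\infty\times L^1$ bound, but both give the same result here since the kernel difference is bounded on $A_j\times J$).

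The gap is in the summation step, in two places. First, to produce the factor $\langle M_{q_{1}}b_{1}\rangle_{2^{k}J}$ in \eqref{errorterm} you need, for every $j\geq k$, the domination $[b_{1}]_{2^{j+1}J,q_{1}}\leq \inf_{x\in 2^{k}J}M_{q_{1}}b_{1}(x)\leq \langle M_{q_{1}}b_{1}\rangle_{2^{k}J}$ (valid because $2^{k}J\subset 2^{j+1}J$, so the average over the large cube is below the maximal function at every point of the small one). Your per-term bound instead carries $\langle M_{q_{1}}b_{1}\rangle_{2^{j+1}J}$, the average of the maximal function over the \emph{growing} cube, which need not be controlled by $\langle M_{q_{1}}b_{1}\rangle_{2^{k}J}$; and appealing to \eqref{b-averageandbound0} only yields a uniform constant bound on the product $\langle M_{q_{1}}b_{1}\rangle_{2^{j+1}J}F_K(\cdots)$, which would replace the right-hand side of \eqref{errorterm} by $2^{-k\delta}|J|[b_{2}f]_{J,q_{2}}$ times a constant --- too weak for the later applications (e.g.\ \eqref{extradelta}), where the decaying factor is essential. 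Second, your claim that monotonicity of $D$ makes $F_{K}(2^{j+1}J,J,2^{j+1}J)$ non-increasing in $j$ is false: $\rdist(2^{j}J,\mathbb B)$ decreases to $1$ as $j\to\infty$, so the $D$-factor can \emph{grow} along the annuli. One must keep the sum $\sum_{j\geq k}2^{-j\delta}D(\rdist(2^{j}J,\mathbb B))$, which is exactly why the paper's final bound involves $\tilde D$ (i.e.\ $\tilde F_{K}$) rather than $F_{K}$; only the factors $L$ and $S$ can be pulled out of the sum by monotonicity.
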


\proof


For $k\geq 2$,
we denote $\Delta_{k}=(2^{k+1}J)\backslash (2^{k}J)=\{ t\in\mathbb R^{n} : 2^{k-1}\ell(J)\leq |t-c(J)|_{\infty }\leq 2^{k}\ell(J)\}$ and 
$\Psi_k=\chi_{\Delta_{k}}$.
We aim to estimate
$|\langle T_{b}\Psi_{k}, f\rangle |$.
For $t\in \sup \Psi_k =\Delta_{k}$ 
and $x\in \sup f\subset J$, we have 
\begin{equation}\label{forsmooth}
|x-c(J)|_{\infty }
\leq \ell(J)/2
<\ell(J)2^{k-2}\leq 2^{-1}|t-c(J)|_{\infty }.
\end{equation}
Then $t$ and $x$ cannot be equal, which implies that the supports of $\Psi_k$ and $f$
are disjoint. Therefore, we can use the kernel representation
and the zero mean of $f$ with respect to $b_{2}$ to write
\begin{align*}
\langle T_{b}\Psi_k, f\rangle
&=\int_{J} \int_{\Delta_{k}} \Psi_{k}(t)f(x)(K(t,x)-K(t,c(J)))b_{1}(t)b_{2}(x)\, dtdx .
\end{align*}
Whence, $|\langle T_{b}\Psi_k, f\rangle|$ can be bounded by
\begin{align*}
\| b_{1}\chi_{I}\|_{L^{q_{1}}(2^{k+1}J)}
\| f b_{2}\|_{L^{q_{2}}(J)}
\Big(\int_{J}
\Big(\int_{\Delta_{k}}
|K(t,x)-K(t,c(J))|^{q_{1}'}\, dt
\Big)^{\frac{q_{2}'}{q_{1}'}}dx\Big)^{\frac{1}{q_{2}'}}.
\end{align*}
We denote the last factor by $\Int $. By \eqref{forsmooth} and the smoothness condition of a compact Calder\'on-Zygmund kernel,
we have 
$$
\Int \lesssim \Big(\int_{J}\Big(\int_{\Delta_{k}}
\frac{|x-c(J)|_{\infty }^{q_{1}'\delta }}{|t-c(J)|_{\infty }^{(n+\delta )q_{1}'}}F_{K}(t,x,c(J))^{q_{1}'}\, dt
\Big)^{\frac{q_{2}'}{q_{1}'}}dx\Big)^{\frac{1}{q_{2}'}}
$$
with $F_{K}(t,x,c(J))=L(|t-c(J)|_{\infty })S(|x-c(J)|_{\infty })D(1+\frac{|t+c(J)|_{\infty }}{1+|t-c(J)|_{\infty }})$.
By Lemma \ref{boundtoFK},
$
F_{K}(t,x,c(J))\lesssim F_{K}(2^{k}J, J,2^{k}J)
$
and so, 
\begin{align*}
\Int \lesssim 
\frac{\ell(J)^{\delta }}{(2^{k}\ell(J))^{n+\delta }}
F_{K}(2^{k}J, J,2^{k}J) |\Delta_{k}|^{\frac{1}{q_{1}'}}|J|^{\frac{1}{q_{2}'}}.
\end{align*}
With this and $|\Delta_{k}|\lesssim 2^{(k+1)n}|J|$, we have
\vspace{-.2cm}
\begin{align*}
|\langle T_{b}\Psi_k, f\rangle|&\lesssim 
[b_{1}]_{2^{k+1}J,q_{1}}
|2^{k+1}J|^{\frac{1}{q_{1}}}
[f b_{2}]_{J,q_{2}}
|J|^{\frac{1}{q_{2}}}
\frac{|\Delta_{k}|^{\frac{1}{q_{1}'}}|J|^{\frac{1}{q_{2}'}}}{2^{k(n+\delta)}|J|}F_{K}(2^{k}J, J,2^{k}J)\\
&\lesssim 2^{-k\delta}
|J|
[b_{1}]_{2^{k+1}J,q_{1}}[f b_{2}]_{J,q_{2}}F_{K}(2^{k}J, J,2^{k}J)
\lesssim 2^{-k\delta}
|J|
\end{align*}
by hypothesis \eqref{b-averageandbound0}.
The right hand side of previous inequality tends to zero when $k$ tends to infinity, proving that the sequence $(\langle T_{b}\chi_{2^{k}J} ,f\rangle )_{k\geq 2}$ is Cauchy and thus, the existence of the limit, which we write as ${\mathcal L}_{b}(f)$.

Now, the stated rate of convergence follows by summing a geometric series. For
every $k'\geq 2$, we have
\begin{align*}
\hspace{-.1cm}|{\mathcal L}_{b}(f)&-\langle T_{b}\chi_{2^{k}J} ,f\rangle |
\leq \lim_{m\rightarrow \infty }|{\mathcal L}_{b}(f)-\langle T_{b}\chi_{2^{m}J},f\rangle |
+\sum_{k'=k}^{\infty }|\langle T_{b}\Psi_{k'},f\rangle |
\\
&\lesssim 
|J|
[f  b_{2}]_{J,q_{2}}\sum_{k'=k}^{\infty }
2^{-k'\delta }[b_{1}]_{2^{k'+1}J,q_{1}}F_{K}(2^{k'}J, J,2^{k'}J)
\\
&\lesssim 
|J|
[f b_{2}]_{J,q_{2}}L(2^{k}J)S(J)
\sum_{k'=k}^{\infty }
2^{-k'\delta }[b_{1}]_{2^{k'+1}J,q_{1}}D(2^{k'}J),
\end{align*}
using $\ell(2^{k}J)\leq \ell(2^{k'}J)$ and that $L$ is non-decreasing. 
For $k'\geq k$ we have
$
[b_{1}]_{2^{k'+1}J,q_{1}}
\leq \inf_{x\in 2^{k}J}(M_{q_{1}}b_{1})(x)
\leq \langle M_{q_{1}}b_{1}\rangle_{2^{k}J}
$
and so, we  obtain
\begin{align*}
|{\mathcal L}_{b}(f)-\langle T_{b}\chi_{2^{k}J} ,f\rangle |
\lesssim 2^{-k\delta}&|J|
\langle M_{q_{1}}b_{1}\rangle_{2^{k}J}[f b_{2}]_{J,q_{2}}
\\
&
L(2^{k}J)S(J)
\sum_{k'=0}^{\infty }
2^{-k'\delta }D(2^{k'}(2^{k}J)).
\end{align*}

\section{The operator acting on bump functions}\label{bump}

In this section, 
we develop estimates of the dual pair $\langle T_{b}h_{I},h_{J}\rangle $ in terms of the space and
frequency location of the bump functions $h_{I},h_{J}$.
Proposition \ref{twobumplemma1} is an improvement of the analog result in \cite{V}. 
Although the new proof is influenced by the works \cite{David}, \cite{NTVTb}, \cite{HyNa}, we follow a different approach: we modify the proof in \cite{V} by implementing all the necessary changes to deal with non-continuous bump functions. 

\begin{definition}\label{Haar} Let $b$ be locally integrable with 
$\langle b\rangle_{I}\neq 0$ for all $I\in \mathcal D$. 
We write
$h_{I}^{b}=|I|^{\frac{1}{2}}\big(\frac{1}{|I|\langle b\rangle_{I}}\chi_{I}-\frac{1}{|I_{p}|\langle b\rangle_{I_{p}}}\chi_{I_{p}}\big)$, 
where $I\in \child(I_{p})$. 

We note that 
$
\| h_{I}^{b}\|_{L^{q}(\mathbb R^{n})}\lesssim C_{I}^{b}|I|^{\frac{1}{q}-\frac{1}{2}}
$ and
$
\| h_{I}^{b}b\|_{L^{q}(\mathbb R^{n})}
\lesssim B_{I,q}^{b}|I|^{\frac{1}{q}-\frac{1}{2}}
$, with constants defined in \ref{CBI}.

\end{definition}

\begin{proposition}\label{twobumplemma1}
Let $T$
be a linear operator with a compact C-Z kernel $K$
and parameter $0<\delta <1$. 
Let $1<q_{i}\leq \infty $ with $q_{1}^{-1}+q_{2}^{-1}<1$ 
and $b_{1}, b_{2}$ be functions compatible with $T$. We assume that
$T$ satisfies the weak compactness condition
and  $Tb_{1}=T^{*}b_{2}=0$.

Let $I, J\in \mathcal D$ and  
$h_{I}=h_{I}^{b_{1}}$, $h_{J}=h_{J}^{b_{2}}$ as in Definition \ref{Haar}. 

\vskip5pt
1) When $\rdist (I_{p},J_{p})> 3$,
\begin{flalign*}
\hskip20pt \bullet \hskip10pt |\langle T_{b}(h_{I}),h_{J}\rangle |
\lesssim 
\frac{\ec(I,J)^{\frac{n}{2}+\delta}}{\rdist(I,J)^{n+\delta}}
B_{1}(I,J)F_{1}(I,J),&&
\end{flalign*}
with 
$
B_{1}(I,J)=B^{b_{1}}_{I,q_{1}}B^{b_{2}}_{J,q_{2}}
$ and
$
F_{1}(I, J)=F_{K}(\langle I,J\rangle ,I\smland J, \langle I,J\rangle ).
$ 
\vskip10pt
2) When $\rdist (I_{p},J_{p})\leq 3$  
and $\inrdist(I_{p},J_{p})>1$, 
\begin{flalign*}
\hskip20pt \bullet \hskip10pt  |\langle T_{b}(h_{I}),h_{J}\rangle |
&\lesssim
\frac{\ec(I,J)^{\frac{n}{2}}}{\inrdist(I,J)^{\delta}}
B_{2}(I,J)
F_{2}(I,J),&&
\end{flalign*}
where now, 
\begin{align*}
B_{2}(I,J)&=
\alpha \sum_{R\in \{I,I_{p}\}}
\frac{\langle M_{q_{1}}b_{1}\rangle_{R}}{|\langle b_{1}\rangle_{R}|}
\sum_{R\in \{J,J_{p}\}}\frac{\langle M_{q_{2}}b_{2}\rangle_{R}}{|\langle b_{2}\rangle_{R}|}
\\
&\hskip20pt 
+C_{I}^{b_{1}}C_{J}^{b_{2}}\langle M_{q_{1}}(b_{1}\chi_{I})\rangle_{I\smland J}
\langle M_{q_{2}}(b_{2}\chi_{J})\rangle_{I\smland J} ,
\\
F_{2}(I, J)&=\tilde F_{K}( \langle I,J\rangle ,I\smland J, \langle I,J\rangle )+
F_{K}(I\smland J, I\smland J,  \langle I,J\rangle )
\end{align*} 
with 
$\alpha =1$ if $I\smland J\subsetneq I\smlor J$, $\alpha =0$ otherwise, and 
$\tilde F_{K}$ as in def. \ref{LSDF}.

\vskip10pt
3) When $\rdist (I_{p},J_{p})\leq 3$ and $\inrdist(I_{p},J_{p})=1$, 
\begin{flalign*}
\hskip20pt\bullet \hskip10pt |\langle T_{b}(h_{I}),h_{J}\rangle |
&\lesssim
\ec(I,J)^{\frac{n}{2}}(B_{2}(I,J)F_{2}(I,J)+B_{3}(I,J)F_{3}(I,J)),&&
\end{flalign*}
where 
$B_{2}$, $F_{2}$ are as before and, when $I\neq J$,
\begin{align*}
B_{3}(I,J)&=C^{b_{1}}_{I}[b_{1}]_{3(I_{p}\smland J_{p}),q_{1}}C^{b_{2}}_{J}[b_{2}]_{3(I_{p}\smland J_{p}),q_{1}},
\hskip20pt F_{3}(I, J)=\tilde F_{K}(I\smland J) 
\end{align*}
while when $I=J$,
\begin{align*}
B_{3}(I,J)&\! =\hspace{-.5cm}\sum_{I',I''\in \child(I_{p})}\hspace{-.3cm}C_{I'}^{b_{1}}[b_{1}]_{I',q_{1}}C_{I''}^{b_{2}} [b_{2}]_{I'',q_{2}},
\hskip10pt F_{3}(I, J)\! =\! \tilde F_{K}(I)\! +\! F_{W}(I;M_{T,\epsilon })\! +\! \epsilon  
\end{align*}
for every $\epsilon >0$ with the value $M_{T,\epsilon }$ given in Definition \ref{weakcompactnessdef}.


\end{proposition}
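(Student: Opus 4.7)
The proof proceeds case by case according to the configuration of the parents $I_p, J_p$. In every case the starting identity is
$$\langle T_b h_I, h_J\rangle = \langle T(b_1 h_I^{b_1}), b_2 h_J^{b_2}\rangle,$$
and the two cancellations $\int h_I^{b_1} b_1 = \int h_J^{b_2} b_2 = 0$ from Definition \ref{Haar} let one subtract constants from $K$. In \textbf{Case 1} the separation $\rdist(I_p, J_p)>3$ makes $I_p, J_p$ disjoint, so \eqref{kernelrep} applies. Assuming WLOG $\ell(I)\le\ell(J)$, I subtract $K(c(I),x)$ from $K(t,x)$ using the $b_1$-cancellation of $h_I^{b_1}$, apply the smoothness \eqref{LSD} (valid since $2|t-c(I)|_\infty \le 2\ell(I_p) < |t-x|_\infty$), and use $|t-x|_\infty \approx \ell(\langle I,J\rangle)$ on $I_p\times J_p$. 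The Hölder bounds $\|h_I^{b_1} b_1\|_{L^1}\lesssim B_{I,q_1}^{b_1}|I|^{1/2}$ and its twin on $J_p$ yield $B_1(I,J)$; Lemma \ref{boundtoFK} converts $F_K(t,x,c(I))$ into $F_K(\langle I,J\rangle, I\smland J, \langle I,J\rangle)$; and reassembling side-length factors produces exactly $\ec(I,J)^{n/2+\delta}/\rdist(I,J)^{n+\delta}$.

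In \textbf{Case 2} the hypotheses force (WLOG) $J_p\subset 3I_p$ with $J_p$ strictly inside one dyadic region of $I_p$, so $h_I^{b_1}$ equals a single constant $\gamma$ on a neighborhood $N\supset J_p$ contained in that region. I decompose
$$\langle T(b_1 h_I^{b_1}), b_2 h_J^{b_2}\rangle = \langle T(b_1 h_I^{b_1}\chi_{N^c}), b_2 h_J^{b_2}\rangle + \gamma\langle T(b_1\chi_N), b_2 h_J^{b_2}\rangle.$$
The first piece has separated supports and is treated as in Case 1 but summed over expanding shells around $J_p$: the series $\sum_k 2^{-k\delta}D(2^k J_p)$ reproduces $\tilde D$ (hence $\tilde F_K$ in $F_2$), and the shell-Hölder bounds monotonized through the maximal operator produce the $\langle M_{q_i} b_i\rangle_R$-averages in $B_2$. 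For the second piece I invoke $Tb_1=0$ through the $\CMO_b$-pairing of Lemma \ref{definecmo} to reduce it to another separated-supports pairing plus a localized residue concentrated on $N\setminus J_p$; the latter is controlled by the adjacent-cubes Lemma \ref{adjacentsquarescoro} at the scale of $J$, yielding the $F_K(I\smland J, I\smland J, \langle I,J\rangle)$ summand of $F_2$ together with the restricted-maximal averages $C_I^{b_1}C_J^{b_2}\langle M_{q_1}(b_1\chi_I)\rangle_{I\smland J}\langle M_{q_2}(b_2\chi_J)\rangle_{I\smland J}$ in $B_2$.

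In \textbf{Case 3} ($\inrdist(I_p,J_p)=1$) the cubes touch the inner dyadic boundary. For $I\ne J$, I replay the Case 2 decomposition but the localized residue now sits on adjacent same-scale cubes inside $3(I_p\smland J_p)$, where Lemma \ref{adjacentsquarescoro} applied directly delivers the $B_3 F_3$ contribution with $[b_i]_{3(I_p\smland J_p),q_i}$ and $\tilde F_K(I\smland J)$. For $I=J$, I expand $h_I^{b_1}$ as a linear combination of $\chi_{I'}$, $I'\in\child(I_p)$, with two distinct constants, and similarly for $h_I^{b_2}$, reducing the pairing to a sum over pairs $(I',I'')\in\child(I_p)^2$ of $\langle T(b_1\chi_{I'}), b_2\chi_{I''}\rangle$; diagonal pairings ($I'=I''$) are bounded by the weak compactness condition \eqref{restrictcompact2}, supplying $F_W(I;M_{T,\epsilon})+\epsilon$, while off-diagonal pairings are handled by Lemma \ref{adjacentsquarescoro}, supplying $\tilde F_K(I)$; collecting constants produces $B_3(I,I)$. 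The main obstacle is the bookkeeping in Case 2: the near/far splitting must simultaneously let $Tb_1=0$ absorb the bulk constant piece, let Lemma \ref{adjacentsquarescoro} clean up the touching residue, and let shell-summations over the far region reconstruct the $\tilde L,\tilde D$ tails of Definition \ref{LSDF}, all while reproducing the restricted-maximal weights featured in $B_2$ rather than looser versions.
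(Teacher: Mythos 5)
Your architecture is the paper's: far case by kernel smoothness plus the cancellation of the finer bump; near case by freezing the coarser bump at a constant on a dilate of the finer cube, handling the constant piece through $Tb_{1}=0$ via Lemma \ref{definecmo}, the remaining separated part by shell sums, and the touching/diagonal configurations by Lemma \ref{adjacentsquarescoro} and the weak compactness condition. Two points in your Case 2, however, do not survive a detailed write-up. First, you have swapped the attributions: the shell piece (integration over annuli $2^{m}\ell(J_{p})<|t-c(J_{p})|_{\infty}\leq 2^{m+1}\ell(J_{p})$) forces the $L$-factor to be evaluated at the \emph{small} scale $\ell(I\smland J)$ and, after H\"older on each shell, produces the restricted averages $C_{I}^{b_{1}}C_{J}^{b_{2}}\langle M_{q_{1}}(b_{1}\chi_{I})\rangle_{I\smland J}\langle M_{q_{2}}(b_{2}\chi_{J})\rangle_{I\smland J}$ together with $F_{K}(I\smland J,I\smland J,\langle I,J\rangle)$; it is the $Tb_{1}=0$ piece, through the error estimate \eqref{errorterm} (whose internal shell sum is what generates $\tilde L,\tilde D$), that produces $\tilde F_{K}(\langle I,J\rangle,\cdot,\langle I,J\rangle)$ with the unrestricted averages $\langle M_{q_{1}}b_{1}\rangle_{R}/|\langle b_{1}\rangle_{R}|$, $R\in\{I,I_{p}\}$. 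As you state them, neither intermediate bound is actually derivable (e.g.\ the constant piece depends on $b_{1}$ outside $I$, so it cannot be controlled by $\langle M_{q_{1}}(b_{1}\chi_{I})\rangle_{I\smland J}$); since $B_{2}$ and $F_{2}$ are sums of both kinds of terms the final inequality is unharmed, but the bookkeeping must be reversed. Second, Lemma \ref{adjacentsquarescoro} is not needed in Case 2: when $\inrdist(I_{p},J_{p})>1$ the residue of $h_{I}$ near $J_{p}$ vanishes identically and the eccentricity exponent satisfies $e\geq 3$, so Lemma \ref{definecmo} applies directly with gain $2^{-e\delta}\lesssim\inrdist(I,J)^{-\delta}$; the adjacent-cubes lemma enters only in Case 3, exactly as you use it there.
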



\begin{remark}\label{defBF} We note that $B\! F$ 
in Definition \ref{accretive} of compatible testing function dominates all terms 
$B_{i}\cdot F_{i}$ in the statement of Proposition \ref{twobumplemma1}. 

In fact,  
$F_{i}(I,J)\lesssim \tilde F_{K}(I\smland J, I\smland J,  \langle I,J\rangle )+F_{W}(I;M_{T,\epsilon })$. Moreover,
$
[b_{1}]_{I,q_{1}}
\lesssim \inf_{I} M_{q_{1}}b_{1}
\lesssim \langle M_{q_{1}}b_{1}\rangle_{I}
$. 

We note that $2\rdist(I_{p},J_{p})-1\leq \rdist(I,J)\leq 2(\rdist(I_{p},J_{p})+1)$ and 
$2\inrdist(I_{p},J_{p})\leq \inrdist(I,J)\leq 2\inrdist(I_{p},J_{p})+1$.
\end{remark}

\begin{proof} By symmetry, we assume $\ell(J)\leq \ell(I)$. 
Let $\psi (t,x)=h_{I}(t)h_{J}(x)$, which is supported on $I_{p}\times J_{p}$ 
and has mean zero in the variable $x$ with respect to $b_2$. 



\vskip10pt
{\bf a)} When $3\ell(I_{p})<\diam (I_{p}\cup J_{p})$, we have that $(5I_{p})\cap J_{p}=\emptyset $ and so,  
we can use the kernel representation and
the zero mean of $\psi$ 
to write
$$
\langle T_{b}h_{I},h_{J}\rangle
=\int_{J_{p}}\int_{I_{p}} \psi(t,x) (K(t,x)-K(t,c(J_{p})))b_{1}(t)b_{2}(x)\, dtdx .
$$

Now, $(5I_{p})\cap J_{p}=\emptyset $ and $\ell(J_{p})\leq \ell(I_{p})$ 
imply 
$\diam(I_{p}\cup J_{p})
\leq \ell(I_{p})+|c(I_{p})-c(J_{p})|_{\infty }$.
With this 
and $|t-c(I_{p})|_{\infty }\leq \ell(I_{p})/2$, we prove:
\begin{align*}
|t-c(J_{p})|_{\infty }&\geq |c(I_{p})-c(J_{p})|_{\infty }-|t-c(I_{p})|_{\infty }
\\
&\geq \diam (I_{p}\cup J_{p})-3\ell(I_{p})/2
>\diam(I_{p}\cup J_{p})/2,
\\
|t-c(J_{p})|_{\infty }
&\leq |c(I_{p})-c(J_{p})|_{\infty }+\ell(I_{p})/2\leq \diam(I_{p}\cup J_{p}) .
\end{align*}

Then, 
\begin{align}\label{faraway}
\nonumber
|\langle &T_{b}h_{I},h_{J}\rangle |
\leq 
\| h_{I}b_{1}\|_{L^{q_{1}}(I)}
\| h_{J}b_{2}\|_{L^{q_{2}}(J)}
\\
&\Big(\int_{J_{p}}
\Big(\int_{I\cap \Delta_{I_{p},J_{p}}}|K(t,x)-K(t,c(J_{p}))|^{q_{1}'}\, dt
\Big)^{\frac{q_{2}'}{q_{1}'}}dx\Big)^{\frac{1}{q_{2}'}},
\end{align}
where $\Delta_{I_{p},J_{p}}=\{ t\in \mathbb R^{n}: 
\ell(\langle I_{p},J_{p}\rangle)/2<|t-c(J_{p})|_{\infty }\leq \ell(\langle I_{p},J_{p}\rangle)\}$.

We denote by $\Int $ the integral in \eqref{faraway}.
From 
$3\ell(I_{p})<\diam(I_{p}\cup J_{p})\leq \ell(I_{p})+|c(I_{p})-c(J_{p})|_{\infty }$, we get 
$2\ell(I_{p})<|c(I_{p})-c(J_{p})|_{\infty }$. This inequality
and 
$|x-c(J_{p})|_{\infty }\leq \ell(J_{p})/2$ imply 
\begin{align*}
|t-c(J_{p})|_{\infty }&
\geq 2\ell(I_{p})-\ell(I_{p})/2
\geq 3\ell(J_{p})/2\geq 3|x-c(J_{p})|_{\infty }.
\end{align*}
Then, by the smoothness condition 
of a compact C-Z kernel,
\begin{align*}
\Int\lesssim 
\Big(\int_{J_{p}}\Big(
\int_{I\cap \Delta_{I_{p},J_{p}}}
\frac{|x-c(J_{p})|_{\infty }^{\delta q_{1}'}}{|t-c(J_{p})|_{\infty }^{(n+\delta )q_{1}'}}F_{K}(t,x,c(J_{p}))^{q_{1}'}
dt\Big)^{\frac{q_{2}'}{q_{1}'}}dx\Big)^{\frac{1}{q_{2}'}},
\end{align*}
with 
$
F_{K}(t,x,c(J_{p}))=L(|t-c(J)|_{\infty })S(|x-c(J_{p})|_{\infty })D\Big(1+\frac{|t+c(J_{p})|_{\infty }}{1+|t-c(J_{p})|_{\infty }}\Big)
$. 
By Lemma \ref{boundtoFK}, 
$
F_{K}(t,x,c(J_{p}))\lesssim F_{K}(\langle I,J\rangle, J,\langle I,J\rangle ) 
$
and so, 
\begin{align*}
\Int \lesssim |I|^{\frac{1}{q_{1}'}}|J|^{\frac{1}{q_{2}'}}
\frac{\ell(J)^{\delta }}{\ell(\langle I,J\rangle )^{n+\delta }}
F_{K}(\langle I,J\rangle, J,\langle I,J\rangle ) .
\end{align*}
We then continue the bound in \eqref{faraway} as
\begin{align*}
|\langle T_{b}h_{I},h_{J}\rangle |
&\lesssim 
B_{I,q_{1}}|I|^{-\frac{1}{2}+\frac{1}{q_{1}}}B_{J,q_{2}}|J|^{-\frac{1}{2}+\frac{1}{q_{2}}}
\\
&\hskip20pt |I|^{\frac{1}{q_{1}'}}|J|^{\frac{1}{q_{2}'}}\frac{\ell(J)^{\delta }}{\ell(\langle I,J\rangle )^{n+\delta }}
F_{K}(\langle I,J\rangle , J, \langle I,J\rangle )
\\
&=\Big(\frac{\ell(J)}{\ell(I)}\Big)^{\frac{n}{2}+\delta }
\Big(\frac{\ell(I)}{\ell(\langle I,J\rangle )}\Big)^{n+\delta }
B_{1}(I,J)F_{1}(I,J).
\end{align*}
This is the result corresponding to the case 1) in the statement. 

{\bf b)} When $\diam (I_{p}\cup J_{p})\leq 3\ell(I_{p})$, we have
$J_{p}\subset 5I_{p}$.
%

We denote by $\tilde{I}_{p}=\frac{\ell(I)}{\ell(J)}J_{p}$, the cube with 
$c(\tilde{I}_{p})=c(J_{p})$, 
$\ell(\tilde{I})=\ell(I)$. Let $e\in \mathbb N$ such that $2^{e}=\frac{\ell(I)}{\ell(J)}\geq 1$. 
We write 
$\varphi_{R}=\frac{|R|^{1/2}}{|R|\langle b_{1}\rangle_{R}}\chi_{R}$ with $R\in \{I,I_{p}\}$ and 
define
$\tilde h_{I}(t)=\varphi_{I}(c(J_{p}))\chi_{I\cap \tilde{I}_{p}}(t)-\varphi_{I_{p}}(c(J_{p}))\chi_{I_{p}\cap \tilde{I}_{p}}(t)$.
Then we 
perform the decomposition
\begin{align*}
\psi&=\psi_{0}+\psi_{1},
\\
\nonumber 
\psi_{1}(t,x)&=\tilde h_{I}(t)h_{J}(x)
\end{align*}

We work first with the term $\psi_{1}$. We denote by
$\psi_{R}(x)
=\varphi_{R}(c(J_{p}))h_{J}(x)$, which satisfies $\psi_{I}\equiv 0$,
$\psi_{I_{p}}\equiv 0$ when $J_{p}\subset (5I_{p})\backslash I_{p}$ and 
$$
\| \psi_{R} b_{2}\|_{L^{q_{2}}(\mathbb R^{n})}
\lesssim |\langle b_{1}\rangle_{R}|^{-1}|R|^{-\frac{1}{2}}
B_{J,q_{2}}^{b_{2}}|J|^{\frac{1}{q_{2}}-\frac{1}{2}}.
$$
%
%

$\bullet $ When $\inrdist(I_{p},J_{p})>1$, we have
either $J_{p}\subset (5I_{p})\backslash I_{p}$ with $\ell(J)\leq \ell(I)$, or $J_{p}\subsetneq I_{p}$ with $\ell(J)\leq \ell(I)/8$. In the former case
$\tilde h_{I}\equiv 0$ and so, we have that $e\geq 3$.
Then, by the special cancellation condition $Tb_{1}=0$, 
equalities $\ell(\tilde{I}_{p})= 2^{e}\ell(J_{p})$ and $|I|\leq |R|$,
$h_{J}$ being supported on $J_{p}$ with mean zero with respect to $b_2$ 
and the error estimate \eqref{errorterm} of Lemma \ref{definecmo}
with the selected $e\geq 3$,
we can bound the contribution of $\psi_{1}$ by
\begin{align}\label{extradelta}
\nonumber
&\sum_{R\in \{I,I_{p}\}}|\langle T_{b}\chi_{I\cap \tilde{I}_{p}},\psi_{R}\rangle | 
= \sum_{R\in \{I,I_{p}\}}|\langle T_{b}\chi_{R\cap \tilde{I}_{p}},\psi_{R}\rangle 
-\langle Tb_{1},b_{2}\psi_{R}\rangle |
\\
\nonumber
&\lesssim 2^{-e\delta }|J|
\sum_{R\in \{I,I_{p}\}}\inf_{x\in R\cap 2^{e+1}J} M_{q_{1}}b_{1}(x)[\psi_{R}b_{2}]_{J,q_{2}}
\tilde F_{K}(2^{e}J,J,2^{e}J)
\\
\nonumber
&\lesssim \left(\frac{\ell(J)}{\ell(I)}\right)^{\delta }|J|
B_{J,q_{2}}^{b_{2}}|J|^{-\frac{1}{2}}|I|^{-\frac{1}{2}}
\hspace{-.3cm}\sum_{R\in \{I,I_{p}\}}\frac{\inf_{x\in R} M_{q_{1}}b_{1}(x)}{|\langle b_{1}\rangle_{R}|}
\tilde F_{K}(\tilde{I},J,\tilde{I})
\\
& \lesssim \left(\frac{\ell(J)}{\ell(I)}\right)^{\frac{n}{2}+\delta}
\sum_{R\in \{I,I_{p}\}}\frac{\langle M_{q_{1}}b_{1}\rangle_{K}}{|\langle b_{1}\rangle_{K}|}
\, B_{J,q_{2}}^{b_{2}}
\tilde F_{K}(I,J,I).
\end{align}
The last two inequalities are due to the facts that $I\subset I_{p}\subset 2\tilde{I}_{p}$ and that $\ell(\tilde{I})=\ell(I)$, $|c(I_{p})-c(\tilde{I}_{p})|_{\infty }\lesssim \ell(I)$ imply 
$\rdist (\tilde{I},\mathbb B)\approx \rdist (I,\mathbb B)$. 

Since $J_{p}\subset 5I_{p}$ implies  $\ell(J_{p})+\dist_{\infty }(J_{p},\mathfrak{D}_{I_{p}})\leq 2\ell(I_{p})$, we have that   
$\ec(I,J)=\frac{\ell(J_{p})}{\ell(I_{p})}\lesssim \frac{\ell(J_{p})}{\ell(J_{p})+\dist_{\infty }(J_{p},\mathfrak{D}_{I_{p}})}\lesssim \inrdist(I_{p},J_{p})^{-1}$ and so,  \eqref{extradelta} 
is smaller than the first term of case 2)  in the statement. 

$\bullet $ When $\inrdist(I_{p},J_{p})=1$, if $e\geq 2$ we can proceed exactly in the same way. The cases $e=0$ or $e=1$ will be treated at the end. 

{\bf c)} Now, we work with the term 
$\psi_{0}(t,x)
=(h_{I}(t)-\tilde h_{I}(t))h_{J}(x)$,
which we further decompose as follows:
\begin{align*}
\psi_{0}&=\psi_{out}+\psi_{in},
\\
\psi_{in}(t,x)&=\psi_{0}(t,x)\chi_{3J_{p}}(t).
\end{align*}

%

{\bf c.1)} We work first with 
\begin{align}\label{psiout}
\psi_{out}(t,x)= (h_{I}(t)-\tilde h_{I}(t))(1-\chi_{3J_{p}}(t))h_{J}(x)
\end{align}
and divide the study in two parts:
\begin{itemize}
\item When $J_{p}\subset (5I_{p})\backslash I_{p}$, we have $\varphi_{K}(c(J_{p}))=0$ for $K\in \{ I,I_{p}\}$. Then 
$\tilde h_{I}(t)=0$ and 
$\psi_{out}(t,x)= h_{I}(t)(1-\chi_{3J_{p}}(t))h_{J}(x)$.
Consequently,  $\psi_{out}(t,x)\neq 0$ implies $t\in I_{p}\cap (3J_{p})^{c}$ and  
$$
\hskip40pt |t-c(J_{p})|_{\infty }\geq \frac{\ell(J_{p})}{2}+\dist_{\infty}(I_{p},J_{p})=\frac{\ell(J_{p})}{2}+\dist_{\infty }(J_{p},{\mathfrak D}_{I_{p}}).
$$
\item When $J_{p}\subseteq I_{p}$, we further divide in two more cases:
\begin{itemize}
\item When $J_{p}=I_{p}$, we have  $\tilde{I}_{p}=I_{p}$. Then $h_{I}=\tilde h_{I}$ and so, 
$\psi_{out}(t,x)\equiv 0$. 
\item When $J_{p}\subsetneq I_{p}$, we have that $J_{p}\subseteq I'$ for some $I'\in \child(I_{p})$. Then 
$h_{I}(t)=\varphi_{I}(c(J_{p}))-\varphi_{I_{p}}(c(J_{p}))$
for all $t\in I'$ and, from \eqref{psiout}, we get $\psi_{out}(t,x)=0$.
That is, $\psi_{out}(t,x)\neq 0$ implies 
$t\in (I_{p}\backslash I')\cap (3J_{p})^{c}$, getting again 
$$
\hskip50pt |t-c(J_{p})|_{\infty }\geq \frac{\ell(J_{p})}{2}+\dist_{\infty}(I\backslash I', J_{p})
= \frac{\ell(J_{p})}{2}+\dist_{\infty }(J_{p},{\mathfrak D}_{I_{p}}) .
$$
\end{itemize}
\end{itemize}

In both cases then, $|t-c(J_{p})|_{\infty }\geq \frac{1}{2}\inrdist(I_{p},J_{p})\ell(J_{p})$. 
Also in both cases,
$|t-c(J_{p})|_{\infty }\leq \diam(I_{p}\cup J_{p})\leq 3\ell(I_{p})$ and
$|t-c(J_{p})|_{\infty }\geq 3\ell(J_{p})/2> \ell(J_{p})$. The latter inequality and $|x-c(J_{p})|_{\infty }\leq \ell(J_{p})/2$
imply $2|x-c(J_{p})|_{\infty }<|t-c(J_{p})|_{\infty }$. Then
we can use the kernel representation 
and the zero mean of $\psi_{out}$ with respect to $b_{2}$
to write
\begin{align}\label{newout}
\nonumber
\langle T_{b}((h_{I}&-\tilde h_{I})(1-\chi_{3J_{p}})),h_{J}\rangle 
\\
&=\int_{J_{p}} \int_{I_{p}\cap \bar{J}} \psi_{out}(t,x) (K(t,x)-K(t,c(J_{p}))) b_1(t) b_2(x) \, dtdx ,
\end{align}
where $\bar{J}=\{ t\in \mathbb R^{n} : \ell(J_{p})/2+\dist_{\infty }(J_{p},{\mathfrak D}_{I_{p}})< |t-c(J_{p})|_{\infty }\leq 3\ell(I_{p})\}$.

Now, we decompose 
$
\bar{J}\subset \bigcup_{m=m_{0}}^{m_{1}}J^{m},
$
where $$J^{m}=\{ t\in I_{p}: 2^{m}\ell(J_{p})< |t-c(J_{p})|_{\infty} \leq 2^{m+1}\ell(J_{p})\},$$ 
with
$m_{0}=\log (\inrdist(I_{p},J_{p})/4)\gtrsim \log (\inrdist(I,J))$ and $m_{1}=\log (3\frac{\ell(I)}{\ell(J)})$. 
Since $J^{m}$ is the difference of two concentric cubes with 
diameters $2^{m+1}\ell(J_{p})$ and $2^{m+2}\ell(J_{p})$, with abuse of notation we write
$\ell(J^{m})=2^{m+2}\ell(J_{p})$ and $c(J^{m})=c(J_{p})$. 
This way, the modulus of \eqref{newout} can be bounded by
\begin{align}\label{newout2}
\sum_{m=m_{0}}^{m_{1}}
&\| \psi_{out}\|_{L^{\infty }(J^{m})\times L^{\infty }(J_{p})}
\| b_{1}\chi_{I}\|_{L^{q_{1}}(J^{m})}\| b_{2}\|_{L^{q_{2}}(J_{p})}
\\
\nonumber
&\Big(\int_{J_{p}} \Big(\int_{I_{p}\cap J^{m}} 
|K(t,x)-K(t,c(J_{p}))|^{q_{1}'}
\, dt\Big)^{\frac{q_{2}'}{q_{1}'}}dx\Big)^{\frac{1}{q_{2}'}}.
\end{align}
We note that 
$
\| \psi_{out}\|_{L^{\infty }(J^{m})\times L^{\infty }(J_{p})}\lesssim 
C_{I}^{b_{1}}|I|^{-\frac{1}{2}}
C_{J}^{b_{2}}|J|^{-\frac{1}{2}}.
$ 
By the smoothness property \eqref{smoothcompactCZ},
we estimate the double integral, denoted again $\Int$: 
\begin{equation}\label{intin}
\Int \leq \Big(\int_{J_{p}}\Big(\int_{J_{m}}\frac{|x-c(J_{p})|^{q_{1}'\delta }}{|t-c(J_{p})|_{\infty }^{q_{1}'(n+\delta)}}
F_{K}(t,x,c(J_{p}))^{q_{1}'}dt\Big)^{\frac{q_{2}'}{q_{1}'}}
dx\Big)^{\frac{1}{q_{2}'}}
\end{equation}
with 
$F_{K}(t,x,c(J_{p}))=L(|t-c(J_{p})|_{\infty })
S(|x-c(J_{p})|_{\infty })D\Big(1+\frac{|t+c(J_{p})|_{\infty }}{1+|t-c(J_{p})|_{\infty }}\Big)$.
Since $2^{m+2}\ell(J)\geq 2^{m+1}\ell(J_{p})\geq |t-c(J_{p})|_{\infty }> 2^{m}\ell(J_{p})\geq \ell(J)$
and $|x-c(J_{p})|_{\infty }\leq \ell(J)$, 
by the proof of Lemma \ref{boundtoFK}, we have 
$$
F_{K}(t,x,c(J_{p}))\leq L(\ell(J))
S(\ell(J))D\Big(1+\frac{|c(J)|_{\infty }}{1+2^{m+2}\ell(J)}\Big) .
$$ 
Moreover, since $J^{m}\subset 10I_{p}$, we get
\begin{align*}
1+\frac{|c(J_{p})|_{\infty }}{1+2^{m+2}\ell(J)}
&\geq 1+\frac{|c(J^{m})|_{\infty }}{1+2^{m+3}\ell(J)}
\gtrsim \rdist (J^{m},\mathbb B)
\\
&\geq \rdist (10I_{p},\mathbb B)
\gtrsim \rdist (I,\mathbb B),
\end{align*}
with clear meaning of $\rdist (J^{m},\mathbb B)$ despite $J^{m}$ is not a cube. Then
\begin{align*}
F_{K}(t,x,c(J_{p}))
&\leq L(\ell(J))
S(\ell(J))D(\rdist (I,\mathbb B))
=F_{K}(J,J,I).
\end{align*}
With this and $|J^{m}|\approx 2^{mn}|J|$, we continue the bound in \eqref{intin} as
\begin{align*}
\Int
&\lesssim F_{K}(J,J,I)\frac{\ell(J)^{\delta }}{(2^{m}\ell(J))^{n+\delta}}
|J^{m}|^{\frac{1}{q_{1}'}}|J|^{\frac{1}{q_{2}'}}
\\
&\lesssim F_{K}(J,J,I) 2^{-m\delta}
|J^{m}|^{-\frac{1}{q_{1}}}|J|^{\frac{1}{q_{2}'}}.
\end{align*}

Therefore, we can estimate \eqref{newout2} by
\begin{align*}
&\sum_{m=m_{0}}^{m_{1}}C_{I}^{b_{1}}
|I|^{-\frac{1}{2}}
C_{J}^{b_{2}}|J|^{-\frac{1}{2}}
[b_{1}\chi_{I}]_{J^{m},q_{1}}|J^{m}|^{\frac{1}{q_{1}}}[b_{2}]_{J,q_{2}}|J|^{\frac{1}{q_{2}}}
\\
&
\hskip50pt  |J^{m}|^{-\frac{1}{q_{1}}}|J|^{\frac{1}{q_{2}'}} 
F_{K}(J,J,I)2^{-m\delta} 
\\
&\lesssim 
\left( \frac{|J|}{|I|} \right)^{\frac{1}{2}}[b_{2}]_{J,q_{2}}F_{K}(J,J,I) 
\sum_{m=m_{0}}^{m_{1}}
2^{-m\delta}[b_{1}\chi_{I}]_{J^{m},q_{1}}.
\end{align*}
Now, since $J^{m}\subset 2^{m+2}J_{p}\subset 14I$, we have
$$
[b_{1}\chi_{I}]_{J^{m},q_{1}}^{q_{1}}\lesssim \frac{1}{2^{mn}|J|}\int_{2^{m+2}J_{p}\cap I}|b_{1}(x)|^{q_{1}}dx
\lesssim [b_{1}\chi_{I}]_{2^{m+2}J_{p},q_{1}}^{q_{1}}
$$
Moreover $\displaystyle [b_{1}\chi_{I}]_{2^{m+2}J_{p},q_{1}}\leq \inf_{x\in J}M_{q_{1}}(b_{1}\chi_{I})(x)\leq \langle M_{q_{1}}(b_{1}\chi_{I})\rangle_{J}$ and so,
\begin{align*}
|\langle T_{b}&((h_{I}-\tilde h_{I})(1-\chi_{3J_{p}})),h_{J}\rangle |
\\
&\lesssim 
\ec(I,J)^{\frac{n}{2}}\langle M_{q_{1}}(b_{1}\chi_{I})\rangle_{J} [b_{2}]_{J,q_{2}}F_{K}(J,J,I) 
\hspace{-.2cm}\sum_{m\geq \log (\inrdist(I,J))}\hspace{-.4cm}
2^{-m\delta}
\\
&\lesssim 
\frac{\ec(I,J)^{\frac{n}{2}}}{\inrdist(I,J)^{\delta}}\langle M_{q_{1}}(b_{1}\chi_{I})\rangle_{J} [b_{2}]_{J,q_{2}}F_{K}(J,J,I), 
\end{align*}
smaller than the second term of case 2) and the first term of case 3).

{\bf c.2)} We now work with 
\begin{align}\label{finalpsiin}
\psi_{in}(t,x)= (h_{I}(t)-\tilde h_{I}(t))\chi_{3J_{p}}(t)h_{J}(x).
\end{align}

{\bf c.2.1)} We first consider the case $\ell(J)<\ell(I)$. 

We start by showing that when 
$\inrdist(J_{p},I_{p})>1$, we have $\psi_{in}\equiv 0$ and so, this term does not appear in case 2) in the statement.  
As said before, the cubes for which $\inrdist(J_{p},I_{p})>1$
satisfy either $J_{p}\subset (5I_{p})\backslash I_{p}$ with $I_{p}\cap 3J_{p}=\emptyset$ or $3J_{p}\subsetneq I_{p}$ with $\ell(J)\leq \ell(I)/8$ . 
In the former case, we have $h_{I}(t)\chi_{3J_{p}}(t)=\tilde h_{I}(t)=0$ and so, $\psi_{in}\equiv 0$. In the latter case, 
we get 
$3J_{p}\subseteq I'$ for some $I'\in \child(I_{p})$ and $3J_{p}\subset \tilde{I}_{p}$. Therefore,   
$h_{I}(t)\chi_{3J_{p}}(t)=\tilde h_{I}(t)\chi_{3J_{p}}(t)$ and  
$\psi_{in}\equiv 0$ again.  

We now consider those cubes $J$ such that 
$\inrdist(J_{p},I_{p})=1$. The cardinality of this family of cubes is at most $c^{n}(\ell(I)/\ell(J))^{n-1}$ for some constant 
$c>1$.
As before, we divide in two cases:
\begin{itemize}
\item When $J_{p}\subseteq (5I_{p})\backslash I_{p}$, we have $\varphi_{K}(c(J_{p}))=0$ for $K\in \{ I, I_{p}\}$ and 
\eqref{finalpsiin} reduces to 
$$
\psi_{in}(t,x)=h_{I}(t)\chi_{3J_{p}}(t)h_{J}(x)=h_{I}(t)\chi_{(3J_{p})\backslash J_{p}}(t)h_{J}(x),
$$ 
since $I_{p}\cap J_{p}=\emptyset $. 
We also note that in this case, $h_{I}$
and $h_{J}$ have disjoint compact support. 

\item When $J_{p}\subsetneq I_{p}$, we have $J_{p}\subseteq I'$ for some $I'\in \child(I_{p})$ and so, $c(J_{p})\in I'$. 
Then we decompose as 
$
3J_{p}=((3J_{p})\cap I')\cup ((3J_{p})\backslash I')
$. 

For all $t\in (3J_{p})\cap I'$, we have $t,c(J_{p})\in I'$, which implies $|t-c(J_{p})|_{\infty }\leq \ell(I_{p})/2$ and so, $t\in \tilde{I}_{p}$.  
Then $h_{I}(t)=\tilde h_{I}(t)$ and, from 
\eqref{finalpsiin}, we get 
$\psi_{in}(t,x)=0$.

On the other hand, for all $t\in (3J_{p})\backslash I'$, we have 
\begin{align*}
\psi_{in}(t,x)= (h_{I}(t)-\tilde h_{I}(t))\chi_{(3J_{p})\backslash I'}(t)h_{J}(x),
\end{align*}
where $(h_{I}-\tilde h_{I})\chi_{(3J_{p})\backslash I'}$ is  disjoint with $h_{J}$ since 
$J_{p}\subset I'$. Moreover, $\chi_{(3J_{p})\backslash I'}\leq \chi_{(3J_{p})\backslash J_{p}}$. 
\end{itemize}

Then we can write in both cases
$
\langle T_{b}h_{I},h_{J}\rangle 
=\langle T_{b}(h_{I}\chi_{(3J_{p})\backslash J_{p}}),h_{J}\chi_{J_{p}}\rangle 
$
%
and, by Lemma \ref{adjacentsquarescoro}, we have
\begin{align*}
|\langle &T_{b}h_{I},h_{J}\rangle |
\lesssim 
|J_{p}| [h_{I}b_{1}]_{3J_{p},q_{1}}[h_{J}b_{2}]_{J_{p},q_{2}}\tilde F_{K}(J)
\\
&\lesssim 
C^{b_{1}}_{I}[b_{1}]_{3J_{p},q_{1}}|I|^{-\frac{1}{2}}
B^{b_{2}}_{J_{p},q_{2}}|J|^{-\frac{1}{2}}|J_{p}| 
F_{3}(J)
\leq
\Big(\frac{\ell(J)}{\ell(I)}\Big)^{\frac{n}{2}}
B_{3}(I,J)F_{3}(J).
\end{align*}
This is the second term of case 3) in the statement when $\ell(J)<\ell(I)$. 

{\bf c.2.2)} Finally, we consider $\ell(J)=\ell(I)$. 
For this case,  which implies $\inrdist(I_{p},J_{p})=1$, we recover the original notation $h_{I}^{b_{1}}, h_{J}^{b_{2}}$ indicating the dependence of the bump functions. 
We note that $\tilde{I}_{p}=J_{p}$ and so, 
\begin{equation}\label{psiin}
\psi_{in}(t,x)= (h_{I}^{b_{1}}(t)-\tilde h_{I}^{b_{1}}(t))h_{J}^{b_{2}}(x).
\end{equation}

If $J_{p}\subset (5I_{p})\backslash I_{p}$, we have $\tilde h_{I}^{b_{1}}\equiv 0$. 
We apply Lemma \ref{adjacentsquarescoro} as in subcase c.2.1) to obtain the second term in 3) for
$I\neq J$ and $\ell(I)=\ell(J)$. 

We are left with the case $J_{p}=I_{p}$, for which $J\in \child(I_{p})$.
For the first term of $\psi_{in }$ in \eqref{psiin}, 
we have
\begin{align*}
h_{I}^{b_{1}}(t)h_{J}^{b_{2}}(x)
=\sum_{I'\in \child(I_{p})}\alpha_{I'}\chi_{I'}(t)
\sum_{I''\in \child(I_{p})}\beta_{I''}\chi_{I''}(x)
\end{align*}
with $\alpha_{I}=|I|^{\frac{1}{2}}(\frac{1}{|I|[b_{1}]_{I}}-\frac{1}{|I_{p}|[b_{1}]_{I_{p}}})$,  
$\alpha_{I'}=-|I|^{\frac{1}{2}}\frac{1}{|I_{p}|[b_{1}]_{I_{p}}}$ for $I'\neq I$ and the same for $\beta_{I''}$ just changing $b_{1}$ and $I$ by $b_{2}$ and $J$. This implies
\begin{align*}
\langle T_{b}h_{I}^{b_{1}}, h_{J}^{b_{2}}\rangle 
&
=\sum_{I'\in \child(I_{p})}\sum_{I''\in \child(I_{p})}\alpha_{I'}\beta_{I''}
\langle T_{b}\chi_{I'},\chi_{I''}\rangle .
\end{align*}
The same reasoning applied to the second term of $\psi_{in}$ in \eqref{psiin} gives
\begin{align*}
\langle T_{b}\tilde h_{I}^{b_{1}}, h_{I'}^{b_{2}}\rangle 
&=\alpha_{I'}\sum_{I''\in \child(I_{p})}\alpha_{I'}\beta_{I''}
\langle T_{b}\chi_{I'},\chi_{I''}\rangle ,
\end{align*}
with $I'$ such that $c(J_{p})\in I'$. Thus, we can study both cases together.

For $I'\neq I''$, since $\dist(I',I'')=0$, 
we can proceed as in c.2.1): from
\eqref{3IminusI}
in Lemma \ref{adjacentsquarescoro}  (or even  
 \eqref{adja} 
in Lemma \ref{adjacentsquares} )  
we get
\begin{align*}
|\langle T_{b}\chi_{I'},\chi_{I''}\rangle |
\lesssim |I| [b_{1}]_{I',q_{1}}[b_{2}]_{I'',q_{2}}
\tilde F_{K}(I) 
\end{align*}

For $I'=I''$, the weak compactness condition of Definition \ref{weakcompactnessdef} gives
\begin{align*}
|\langle T_{b}\chi_{I'},\chi_{I'}\rangle |
&\lesssim |I|[b_{1}]_{I',q_{1}}[b_{2}]_{I',q_{2}}
(F_{W}(I';M)+\epsilon ).
\end{align*}

From $|\alpha_{I}|\lesssim C_{I}^{b_{1}}|I|^{-\frac{1}{2}}$,
$|\beta_{I'}|\lesssim C_{I'}^{b_{2}}|I'|^{-\frac{1}{2}}$
we have $|I||\alpha_{I}||\beta_{I'}|\lesssim C_{I}^{b_{1}}C_{I'}^{b_{2}}$. With this, 
\begin{align*}
|\langle T_{b}h_{I}^{b_{1}}, h_{I}^{b_{2}}\rangle |
&\lesssim 
\hspace{-.2cm}
\sum_{I',I''\in \child(I_{p})}\hspace{-.3cm}C_{I'}^{b_{1}}[b_{1}]_{I',q_{1}}C_{I''}^{b_{2}} [b_{2}]_{I'',q_{2}}
 (\tilde F_{K}(I) +F_{W}(I;M_{T,\epsilon })+\epsilon )
\end{align*}
%
%
%
This is the second term of case 3) in the statement when $I=J$. 
There is still one case to end the proof: the term left undone at the end of case b), that is,
 the bound for $|\langle T_{b}\chi_{\tilde{I}},\tilde{\psi}\rangle |$ when $\inrdist(I_{p},J_{p})=1$ and $\ec(I,J) \in \{0,1\}$. But it now is clear that this expression can be bounded in the same way we did in case 
 c.2.1) with the use of Lemma \ref{adjacentsquarescoro} and case c.2.2) using the weak compactness condition.
 This provides the first term of case 3) in the statement.
\end{proof}

\section{The adapted wavelet system}

\begin{definition} Let $b$ be locally integrable with
$\langle b\rangle_{I}\neq 0$ for $I\in \mathcal D$. 
Following 
\cite{NTVTb}, we define the expectation associated with $b$  
$
E_{Q}^{b}f=\frac{\langle f\rangle_{Q}}{\langle b\rangle_{Q}}b\chi_{Q}
$
for every locally integrable function $f$. And for every $k\in \mathbb Z$,
$$
E_{k}^{b}f=\sum_{\tiny \begin{array}{c}Q\in \mathcal D\\ \ell(Q)=2^{-k}\end{array}}E_{Q}^{b}f .
$$
We also define their corresponding difference operators
$$
\Delta_{k}^{b}f=E_{k}^{b}f-E_{k-1}^{b}f
=\sum_{\tiny \begin{array}{c}Q\in \mathcal D\\ \ell(Q)=2^{-(k-1)}\end{array}}\Delta_{Q}^{b}f ,
$$
where
\begin{equation}\label{Deltaincoord}
\Delta_{Q}^{b}f=\Big(\sum_{I\in \child(Q)}E_{I}^{b}f\Big)-E_{Q}^{b}f
=\sum_{I\in \child(Q)}\Big(\frac{\langle f\rangle_{I}}{\langle b\rangle_{I}}-\frac{\langle f\rangle_{Q}}{\langle b\rangle_{Q}}\Big)b\chi_{I}.
\end{equation}
\end{definition}

\begin{definition}[Adapted Haar wavelets]\label{defpsi}
Let $b$ be a locally integrable function with non-zero dyadic averages. 
For $I\in \mathcal D$, we remind the functions given in Definition \ref{Haar},
$h_{I}^{b}
=|I|^{\frac{1}{2}}\big(\frac{1}{|I|\langle b\rangle_{I}}\chi_{I}-\frac{1}{|I|\langle b\rangle_{I_{p}}}\chi_{I_{p}}\big)$. 

Then for $I\in \mathcal D$ we define the Haar wavelets adapted to $b$ 
and their corresponding dual wavelets as
\begin{equation*}
\psi_{I}^{b}=h_{I}^{b}b ,
\hskip40pt
\tilde{\psi}_{I}^{b}=h_{I}^{b}\langle b\rangle_{I}.
\end{equation*}

\end{definition}

We have the following result:
\begin{lemma}\label{adaptedwavelets} For every locally integrable function $f$, 
$$
\Delta_{Q}^{b}f
=\sum_{I\in \child(Q)}\langle f, \tilde{\psi}_{I}^{b}\rangle 
\psi_{I}^{b},
$$
where $\langle f,g\rangle =\int_{\mathbb R^{n}}f(x)g(x)dx$. 
\end{lemma}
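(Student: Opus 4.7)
The plan is to verify the identity by direct algebraic expansion, exploiting the additivity relations $\sum_{I\in\child(Q)}|I|\langle f\rangle_I=|Q|\langle f\rangle_Q$ and $\sum_{I\in\child(Q)}|I|\langle b\rangle_I=|Q|\langle b\rangle_Q$. Since formula \eqref{Deltaincoord} already expresses $\Delta_Q^b f$ as $\sum_{I\in\child(Q)}\alpha_I\,b\chi_I$ with $\alpha_I=\langle f\rangle_I/\langle b\rangle_I-\langle f\rangle_Q/\langle b\rangle_Q$, the task reduces to showing that $\sum_{I\in\child(Q)}\langle f,\tilde\psi_I^b\rangle\,\psi_I^b$ collapses to the same sum.

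First I would plug $\tilde\psi_I^b=h_I^b\,\langle b\rangle_I$ and the explicit form of $h_I^b$ from Definition \ref{Haar} into the pairing to obtain $\langle f,\tilde\psi_I^b\rangle=|I|^{1/2}\langle b\rangle_I\,\alpha_I$, where I have used that $I_p=Q$ for each $I\in\child(Q)$. Similarly, $\psi_I^b=h_I^b\,b$ splits as $|I|^{-1/2}\langle b\rangle_I^{-1}\,b\chi_I-|I|^{1/2}|Q|^{-1}\langle b\rangle_Q^{-1}\,b\chi_Q$. Multiplying the two factors gives
\[
\langle f,\tilde\psi_I^b\rangle\,\psi_I^b=\alpha_I\,b\chi_I-\frac{|I|\langle b\rangle_I\,\alpha_I}{|Q|\langle b\rangle_Q}\,b\chi_Q.
\]

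Next, I would sum over $I\in\child(Q)$. The first piece $\sum_{I\in\child(Q)}\alpha_I\,b\chi_I$ matches \eqref{Deltaincoord} and contributes exactly $\Delta_Q^b f$, so it remains only to check that the coefficient multiplying $b\chi_Q$ in the second piece vanishes after summation. Expanding $\alpha_I$,
\[
\sum_{I\in\child(Q)}|I|\langle b\rangle_I\,\alpha_I=\sum_{I\in\child(Q)}|I|\langle f\rangle_I-\frac{\langle f\rangle_Q}{\langle b\rangle_Q}\sum_{I\in\child(Q)}|I|\langle b\rangle_I=\int_Q f-\frac{\langle f\rangle_Q}{\langle b\rangle_Q}\int_Q b=0,
\]
by the two additivity relations.

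The argument is purely algebraic and I do not anticipate any analytic obstacle. The only delicate point is verifying that the contribution supported on $\chi_Q$ telescopes to zero, and this is precisely where the scaling $\langle b\rangle_I$ in the definition of $\tilde\psi_I^b$ plays its role: it calibrates $\{\psi_I^b\}$ and $\{\tilde\psi_I^b\}$ into the biorthogonal pair associated with the $b$-weighted martingale differences $\Delta_Q^b$, so that the sole piece of $\langle f,\tilde\psi_I^b\rangle\,\psi_I^b$ surviving after averaging against $b$ over $Q$ is exactly $\alpha_I\,b\chi_I$.
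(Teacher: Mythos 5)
Your proposal is correct and follows essentially the same route as the paper: a direct algebraic expansion using \eqref{Deltaincoord} and the additivity of averages over the children of $Q$, with the key point being that the $b\chi_{Q}$-supported contribution telescopes to zero (the paper records this same cancellation as \eqref{noLI}). The only difference is cosmetic — you expand the wavelet sum and collapse it to $\Delta_{Q}^{b}f$, whereas the paper rewrites $\Delta_{Q}^{b}f$ into the wavelet sum.
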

\proof
A direct computation starting at \eqref{Deltaincoord} shows that 
$$
\Delta_{Q}^{b}f
=\sum_{I\in \child(Q)}\langle f\rangle_{I}\Big(\frac{1}{\langle b\rangle_{I}}\chi_{I}-\frac{|I|}{|Q|}\frac{1}{\langle b\rangle_{Q}}\chi_{Q}\Big)b
=\sum_{I\in \child(Q)}\langle f\rangle_{I}|I|^{\frac{1}{2}}\psi_{I}^{b}.
$$ 
Also from \eqref{Deltaincoord}, we have
\begin{equation}\label{Deltaaverage}
\langle \Delta_{Q}^{b}f\rangle_{I}=\langle f\rangle_{I}-\frac{\langle b\rangle_{I}}{\langle b\rangle_{Q}}\langle f\rangle_{Q}
\end{equation}
and 
since 
\begin{equation}\label{noLI}
\sum_{I\in \child(Q)}\frac{\langle b\rangle_{I}}{\langle b\rangle_{Q}}\langle f\rangle_{Q}
|I|^{\frac{1}{2}}\psi_{I}^{b}=0
\end{equation}
we get 
$$
\Delta_{Q}^{b}f
=\sum_{I\in \child(Q)}\langle \Delta_{Q}^{b}f\rangle_{I}
|I|^{\frac{1}{2}}\psi_{I}^{b}.
$$
Now, we use \eqref{Deltaaverage} to compute the coefficients and get:
$$
|I|^{\frac{1}{2}}\langle \Delta_{Q}^{b}f\rangle_{I}
=|I|^{\frac{1}{2}}\int f(x) \Big(\frac{\chi_{I}(x)}{|I|}-\frac{\langle b\rangle_{I}}{\langle b\rangle_{Q}}\frac{\chi_{Q}(x)}{|Q|}\Big)dx
=\langle f, \tilde{\psi}_{I}^{b}\rangle .
$$

%
\begin{remark}
From \eqref{noLI} or the dual equality
\begin{equation}\label{2^d-1}
\sum_{I\in \child(Q)}
\tilde\psi_{I}^{b}=0,
\end{equation}
we see that this wavelet system is not linearly independent.
\end{remark}

\begin{corollary} For ${\displaystyle (\Delta_{Q}^{b})^{*}f
=\sum_{I\in \child(Q)}\Big(\frac{\langle fb\rangle_{I}}{\langle b\rangle_{I}}-\frac{\langle fb\rangle_{Q}}{\langle b\rangle_{Q}}\Big)\chi_{I}}
$, we have
$$
(\Delta_{Q}^{b})^{*}f
=\sum_{I\in \child(Q)}\langle f, \psi_{I}^{b}\rangle
\tilde\psi_{I}^{b}.
$$
\end{corollary}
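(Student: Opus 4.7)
The plan is a direct calculation that mirrors the proof of Lemma \ref{adaptedwavelets}. Expanding the definition given in the statement,
$$
(\Delta_Q^b)^* f = \sum_{I\in\child(Q)}\frac{\langle fb\rangle_I}{\langle b\rangle_I}\chi_I - \frac{\langle fb\rangle_Q}{\langle b\rangle_Q}\chi_Q,
$$
so the task is to verify that $\sum_{I\in\child(Q)}\langle f,\psi_I^b\rangle\,\tilde\psi_I^b$ produces exactly this expression. Using $\psi_I^b = h_I^b\,b$ and the explicit form of $h_I^b$ from Definition \ref{Haar} with $I_p=Q$, I would compute
$$
\langle f,\psi_I^b\rangle = \int f\,h_I^b\,b\,dx = |I|^{\frac{1}{2}}\Big(\frac{\langle fb\rangle_I}{\langle b\rangle_I} - \frac{\langle fb\rangle_Q}{\langle b\rangle_Q}\Big),
$$
and similarly $\tilde\psi_I^b = h_I^b\langle b\rangle_I = |I|^{\frac{1}{2}}\bigl(\frac{1}{|I|}\chi_I - \frac{\langle b\rangle_I}{|Q|\langle b\rangle_Q}\chi_Q\bigr)$.

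Multiplying and summing over $I\in\child(Q)$, the $\chi_I$ pieces reproduce the first sum in the formula for $(\Delta_Q^b)^*f$ immediately. The $\chi_Q$ pieces contribute
$$
-\frac{\chi_Q}{|Q|\langle b\rangle_Q}\sum_{I\in\child(Q)}\Big(|I|\langle fb\rangle_I - \frac{\langle fb\rangle_Q}{\langle b\rangle_Q}|I|\langle b\rangle_I\Big),
$$
and the two averaging identities $\sum_{I\in\child(Q)}|I|\langle fb\rangle_I = |Q|\langle fb\rangle_Q$ and $\sum_{I\in\child(Q)}|I|\langle b\rangle_I = |Q|\langle b\rangle_Q$ collapse the parenthesis to zero except for the single remaining term $-\frac{\langle fb\rangle_Q}{\langle b\rangle_Q}\chi_Q$. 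Combining gives the claimed identity.

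A cleaner repackaging uses equation \eqref{2^d-1}: since the children of $Q$ all have the same measure, $\sum_{I\in\child(Q)}\tilde\psi_I^b = 0$, so adding any $I$-independent multiple of $|I|^{\frac{1}{2}}\tilde\psi_I^b$ to the coefficients changes nothing; only the $I$-dependent part $\frac{\langle fb\rangle_I}{\langle b\rangle_I}$ of $\langle f,\psi_I^b\rangle$ truly matters, and a one-line check on the restriction to each $\chi_I$ recovers the formula. Alternatively, one can simply take the formal adjoint of Lemma \ref{adaptedwavelets} with respect to the pairing $\langle f,g\rangle = \int fg$ to get the wavelet representation of $(\Delta_Q^b)^*$, and then match it against the explicit formula in the statement by the same short computation. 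There is no substantive obstacle here; the computation is entirely routine, driven by the explicit shape of the adapted Haar system.
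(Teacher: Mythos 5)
Your argument is correct and is precisely the direct computation the paper intends: the corollary is stated without proof as the dual of Lemma \ref{adaptedwavelets}, and your displayed formulas for $\langle f,\psi_I^b\rangle$ and $\tilde\psi_I^b$ together with the two averaging identities (and, in the repackaged version, \eqref{2^d-1}) are exactly what is needed. One bookkeeping remark: with your split, each $\chi_I$ piece already carries the full coefficient $\frac{\langle fb\rangle_I}{\langle b\rangle_I}-\frac{\langle fb\rangle_Q}{\langle b\rangle_Q}$, so the $\chi_I$ pieces alone give all of $(\Delta_Q^b)^{*}f$, while the $\chi_Q$ pieces collapse to exactly zero (both halves of your parenthesis sum to $|Q|\langle fb\rangle_Q$ and cancel), not to $-\frac{\langle fb\rangle_Q}{\langle b\rangle_Q}\chi_Q$; the total is of course unaffected.
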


Next lemma states the orthogonality properties of the adapted Haar wavelets. The proof follows from direct calculations.

\begin{lemma}\label{psiortho} For $I,J\in \mathcal D$, 
$
\int \psi_{I}^{b}(x)dx=\int \tilde{\psi}_{I}^{b}(x)b(x)dx =0.
$
Moreover, 
\begin{equation}\label{psiortho1}
\langle \psi_{I}^{b},\tilde{\psi}_{J}^{b}\rangle =0
\end{equation}
when $I_{p}\neq J_{p}$, while for $I_{p}=J_{p}$, we have
\begin{equation}\label{psiortho2}
\langle \psi_{I}^{b},\tilde{\psi}_{J}^{b}\rangle =\delta (I-J)-\frac{|J|\langle b\rangle_{J}}{|I_{p}|\langle b\rangle_{I_{p}}}.
\end{equation}
where 
$\delta (I-J)=1$ if $I=J$ and zero otherwise. 

Finally,  
$
\| \tilde{\psi}_{I}^{b}\|_{L^{q}(\mathbb R^{n})}\lesssim C_{I}^{b}|\langle b\rangle_{I}||I|^{\frac{1}{q}-\frac{1}{2}}
$ and
$
\| \psi_{I}^{b}\|_{L^{q}(\mathbb R^{n})}
\lesssim B_{I,q}^{b}|I|^{\frac{1}{q}-\frac{1}{2}}
$.
\end{lemma}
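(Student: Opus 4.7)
The plan is to establish each of the four assertions by direct computation, exploiting the piecewise-constant structure of $h_I^b$ and the identity $\int_I b\,dx=|I|\langle b\rangle_I$.

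For the mean-zero claims, substituting $\psi_I^b=h_I^b b$ and using $\int_I b=|I|\langle b\rangle_I$, $\int_{I_p}b=|I_p|\langle b\rangle_{I_p}$ makes the two terms in $h_I^b b$ integrate to $|I|^{1/2}$ and $-|I|^{1/2}$, respectively, cancelling out. The identity $\int \tilde\psi_I^b\,b\,dx=\langle b\rangle_I\int h_I^b b\,dx$ then reduces the second mean-zero claim to the first.

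For \eqref{psiortho1} with $I_p\neq J_p$, I would split into three cases. If $I_p\cap J_p=\emptyset$, the supports are disjoint. If $I_p\subsetneq J_p$, the dyadic cube $I_p$ lies entirely in one of the pieces (either $J$ or a sibling of $J$ inside $J_p$) on which $\tilde\psi_J^b$ is constant; the pairing then equals that constant times $\int\psi_I^b\,dx=0$. The symmetric case $J_p\subsetneq I_p$ uses instead that $\psi_I^b=(\mathrm{const})\cdot b$ on $J_p$, reducing the pairing to a scalar multiple of $\int h_J^b b\,dx=0$.

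For \eqref{psiortho2} with $I_p=J_p$, I would expand $h_I^b h_J^b$ as a sum of four characteristic-function products. Using $\chi_I\chi_J=\delta_{IJ}\chi_I$ (since $I,J\in\child(I_p)$), $\chi_I\chi_{I_p}=\chi_I$, and the three average identities, integration against $b$ causes the cross terms to cancel, leaving $\delta(I-J)\,\langle b\rangle_J/\langle b\rangle_I - |I|\langle b\rangle_J/(|I_p|\langle b\rangle_{I_p})$; with $|I|=|J|$ and $\langle b\rangle_J/\langle b\rangle_I=1$ when $I=J$, this gives the claimed formula. The $L^q$ bounds follow from the triangle inequality applied to the two pieces of $h_I^b$, using $\|\chi_I b\|_{L^q}=[b]_{I,q}|I|^{1/q}$ and $\|\chi_{I_p}b\|_{L^q}=[b]_{I_p,q}|I_p|^{1/q}$. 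Since $|I_p|\approx|I|$, the volume factors combine to $|I|^{1/q-1/2}$ while the remaining ratios assemble into $B_{I,q}^b$; for $\tilde\psi_I^b=\langle b\rangle_I h_I^b$, one drops $b$ from the norms and gains the prefactor $|\langle b\rangle_I|$, producing $C_I^b$ in place of $B_{I,q}^b$. No serious obstacle is expected; the only mild subtlety is verifying, in the case $I_p\subsetneq J_p$, that $I_p$ sits in a single piece of constancy of $\tilde\psi_J^b$, which is automatic from dyadic nesting.
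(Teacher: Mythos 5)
Your proposal is correct and is exactly the "direct calculation" the paper invokes without writing out: the paper's proof of Lemma \ref{psiortho} is omitted with the remark that it follows from direct computation, and your case analysis (disjoint supports; constancy of one wavelet on the support of the other when the parents are strictly nested; expansion of the four characteristic-function products when $I_p=J_p$; and the triangle inequality with $|I_p|\approx|I|$ for the $L^q$ bounds) is the intended argument. All the identities you use check out against Definitions \ref{Haar} and \ref{defpsi}.
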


\vskip10pt
The next result, which generalizes the classical Carleson's Embedding Theorem, is used in Lemma \ref{Planche} and Section \ref{paraproducts}. The proof follows from a direct adaptation of the demonstration included in \cite{AHMTT}.

\begin{lemma}[Carleson Embedding Theorem]\label{Carleson}
Let $(a_{I})_{I\in \mathcal D}$ a collection of non-negative numbers such that for all $I\in \mathcal D$,
\begin{equation}\label{Carlesonsequence}
\sum_{J\in \mathcal D(I)}a_{J}\lesssim [b]_{I,2}^{2}|I|.
\end{equation}
Then for every $f\in L^{2}(\mathbb R^{n})$,
${\displaystyle
\sum_{I\in \mathcal D}[b]_{I,2}^{-2}\, a_{I}
|\langle f\rangle_{I}|^{2}
\lesssim \| f\|_{L^{2}(\mathbb R^{n})}^{2}}
$.
\end{lemma}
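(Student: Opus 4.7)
The plan is to adapt the classical Carleson Embedding argument via a stopping-time, exactly as in \cite{AHMTT}, with the additional twist that the weight $[b]_{I,2}^{-2}$ must be absorbed into the Carleson sequence.

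First I would apply the standard layer-cake decomposition. Writing $|\langle f\rangle_I|^{2}=2\int_{0}^{|\langle f\rangle_I|}\lambda\,d\lambda$ and interchanging summation and integration yields
$$
\sum_{I\in\mathcal D}[b]_{I,2}^{-2}\,a_I\,|\langle f\rangle_I|^{2}
 \;=\;
2\int_{0}^{\infty}\lambda\,
\Bigl(\sum_{I:\,|\langle f\rangle_I|>\lambda}[b]_{I,2}^{-2}\,a_I\Bigr)d\lambda .
$$
For each $\lambda>0$, let $\mathcal F_\lambda$ denote the family of maximal dyadic cubes with $|\langle f\rangle_I|>\lambda$. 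These cubes are pairwise disjoint and $\bigcup_{J\in\mathcal F_\lambda}J\subseteq\{Mf>\lambda\}$; moreover every cube $I$ appearing in the inner sum is contained in some $J\in\mathcal F_\lambda$, so the inner sum is bounded by $\sum_{J\in\mathcal F_\lambda}\sum_{I\subseteq J}[b]_{I,2}^{-2}a_I$.

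The heart of the argument is therefore the \emph{local estimate}
$$
\sum_{I\in\mathcal D(J)}[b]_{I,2}^{-2}\,a_I\;\lesssim\;|J|,\qquad J\in\mathcal D. \tag{$\ast$}
$$
To prove $(\ast)$ I would run a secondary stopping-time on $[b]_{I,2}$ inside $J$: starting with $J$ as the root, recursively declare as stopping descendants of each selected cube $K$ the maximal cubes $I\subsetneq K$ for which $[b]_{I,2}^{2}$ falls outside the interval $[\tfrac12 [b]_{K,2}^{2},\,2[b]_{K,2}^{2}]$. Call the resulting collection of tops $\mathcal S(J)$, and for each $K\in\mathcal S(J)$ let $\mathrm{Tr}(K)$ be the cubes lying strictly between $K$ and its stopping descendants. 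On $\mathrm{Tr}(K)$ one has $[b]_{I,2}^{-2}\lesssim[b]_{K,2}^{-2}$, so the Carleson hypothesis yields
$$
\sum_{I\in\mathrm{Tr}(K)}[b]_{I,2}^{-2}a_I
\;\lesssim\;[b]_{K,2}^{-2}\sum_{I\subseteq K}a_I
\;\lesssim\;[b]_{K,2}^{-2}\cdot[b]_{K,2}^{2}\,|K|
\;=\;|K|.
$$
Summing over $\mathcal S(J)$ reduces $(\ast)$ to the packing bound $\sum_{K\in\mathcal S(J)}|K|\lesssim|J|$, which is obtained by a Chebyshev estimate: the disjoint ``upper'' stopping descendants (those with $[b]^{2}>2[b]_{K,2}^{2}$) already satisfy $\sum|K_i|\leq|K|/2$ from $\int_{K_i}|b|^{2}\leq\int_K|b|^{2}$, and the ``lower'' stops are absorbed by iterating.

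With $(\ast)$ in hand the proof concludes immediately:
$$
\sum_{I:\,|\langle f\rangle_I|>\lambda}[b]_{I,2}^{-2}a_I
\;\lesssim\;\sum_{J\in\mathcal F_\lambda}|J|
\;\leq\;|\{Mf>\lambda\}|,
$$
and integrating in $\lambda$ gives
$$
\sum_{I\in\mathcal D}[b]_{I,2}^{-2}a_I\,|\langle f\rangle_I|^{2}
\;\lesssim\;\int_{0}^{\infty}2\lambda\,|\{Mf>\lambda\}|\,d\lambda
\;=\;\|Mf\|_{L^{2}}^{2}\;\lesssim\;\|f\|_{L^{2}}^{2}
$$
by the $L^{2}$-boundedness of the Hardy--Littlewood maximal operator.

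The main obstacle is the local bound $(\ast)$, and within it the packing estimate for the bidirectional $[b]$-stopping: this is where the weighted nature of the hypothesis is genuinely used, and it is the only step that departs from the textbook Carleson embedding. Everything else is a direct transcription of the classical proof.
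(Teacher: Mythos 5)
Your layer-cake reduction to the local estimate $(\ast)$, that is $\sum_{I\in\mathcal D(J)}[b]_{I,2}^{-2}a_{I}\lesssim|J|$, is the right skeleton of the classical argument (the paper gives no written proof of this lemma, deferring to \cite{AHMTT}; in the accretive case $[b]_{I,2}\approx 1$ and $(\ast)$ follows from \eqref{Carlesonsequence} with no stopping time at all). But the step you yourself flag as the main obstacle --- the packing bound $\sum_{K\in\mathcal S(J)}|K|\lesssim|J|$ for your bidirectional stopping family --- is a genuine gap. Your Chebyshev argument handles only the ``upper'' stops. For the ``lower'' stops there is no packing whatsoever: the maximal subcubes $I\subsetneq K$ with $[b]_{I,2}^{2}\leq\tfrac12[b]_{K,2}^{2}$ can have total measure equal to $|K|$ (take $|b|^{2}$ concentrated on a small subset of $K$; every dyadic subcube missing that subset has $L^{2}$-average far below $[b]_{K,2}$, and the maximal such cubes tile essentially all of $K$). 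Iterating then yields $\sum_{K\in\mathcal S(J)}|K|\approx N|J|$ after $N$ generations, and the assertion that the lower stops are ``absorbed by iterating'' has no justification.

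Moreover, the failure is not an artifact of your particular stopping time: the local estimate $(\ast)$ is precisely the case $f=\chi_{J}$ of the lemma, and it does not follow from \eqref{Carlesonsequence} alone. For any $N$ one can build a bounded $b\geq 0$ on $J=[0,1)$ and a sequence $(a_{I})$ with absolute Carleson constant in \eqref{Carlesonsequence} such that $\sum_{I\in\mathcal D(J)}[b]_{I,2}^{-2}a_{I}\geq N|J|/2$: choose nested generations $\mathcal S_{1},\dots,\mathcal S_{N}$ of disjoint cubes, each covering a fraction $1-2^{-r}$ of the previous one, arrange $\langle|b|^{2}\rangle_{K}=\epsilon^{m}$ for $K\in\mathcal S_{m}$, and set $a_{K}=\int_{K}|b|^{2}$ on the selected cubes. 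The geometric decay of the densities $\epsilon^{m}$ keeps \eqref{Carlesonsequence} uniformly bounded, while each generation contributes about $|J|$ to the weighted sum. So the argument cannot be repaired within this framework: some quantitative control on the oscillation of $I\mapsto[b]_{I,2}^{2}/|\langle b\rangle_{I}|^{2}$, of the kind built into the compatibility conditions \eqref{b-averageandbound0}--\eqref{b-averageandbound0limit}, is genuinely needed before a weighted Carleson embedding of this form can hold.
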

\begin{remark}
For $(a_{I})_{I\in \mathcal D}$, $(b_{I})_{I\in \mathcal D}$ with $a_{I}$, $b_{I}$ non-negative,
\begin{align}\label{Carleson2} 
\sum_{I\in \mathcal D}a_{I}b_{I}
|\langle f\rangle_{I}|^{2}
\lesssim \sup_{I\subset \mathbb R^{n}}\Big(\frac{b_{I}}{|I|}\sum_{J\in \mathcal D(I)}a_{J}\Big)\, \| f\|_{L^{2}(\mathbb R^{n})}^{2}.
\end{align}

\end{remark}

\begin{lemma}\label{Planche} Let $b$ be a locally integrable function compatible with an operator and let $B\!F$ as stated in  
Definition \ref{accretive}. 
Then, 
$$
\sum_{I\in \mathcal D}
B\!F(I,J)
|\langle f,\tilde{\psi}_{I}^{b}\rangle |^{2}
\lesssim \| f\|_{L^{2}(\mathbb R^{n})}^{2}
$$
for every locally integrable function $f$ and every $J\in \mathcal D$.

Moreover, for $\epsilon>0$, there is $M_{0}\in \mathbb N$, such that for all $M>M_{0}$, 
$$
\sum_{I\in \mathcal D_{M}^{c}}
\sup_{\tiny \begin{array}{c}J\in  \mathcal D_{M}^{c}\\ 
(I,J)\in \mathcal F_{M}\end{array}}\hspace{-.5cm}
B\!F(I,J)
|\langle f,\tilde{\psi}_{I}^{b}\rangle |^{2}
\lesssim \epsilon \| f\|_{L^{2}(\mathbb R^{n})}^{2}
$$
for every locally integrable function $f$, where $\mathcal F_{M}$ is given after condition 
\eqref{b-averageandbound0limit} of Definition \ref{accretive}.
\end{lemma}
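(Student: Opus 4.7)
The plan is to rewrite $\langle f,\tilde\psi_I^b\rangle$ as a rescaled $b$-weighted dyadic martingale increment of $f$ and then apply the Carleson Embedding Theorem (Lemma \ref{Carleson}, in its weighted form \eqref{Carleson2}), with a Carleson measure extracted from the weight $B\!F(I,J)$ via the compatibility hypothesis of Definition \ref{accretive}.

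By Definition \ref{defpsi} a direct computation gives
\[
\langle f,\tilde\psi_I^b\rangle=|I|^{1/2}\langle b\rangle_I\Bigl(\tfrac{\langle f\rangle_I}{\langle b\rangle_I}-\tfrac{\langle f\rangle_{I_p}}{\langle b\rangle_{I_p}}\Bigr),
\]
so that $|\langle f,\tilde\psi_I^b\rangle|^{2}\lesssim |I|\,\bigl(|\langle f\rangle_I|^{2}+\bigl|\tfrac{\langle b\rangle_I}{\langle b\rangle_{I_p}}\bigr|^{2}|\langle f\rangle_{I_p}|^{2}\bigr)$. Splitting the target sum along this inequality and, for the second piece, reindexing $I\mapsto I_p$ with summation over the $2^n$ children reduces both assertions of the lemma to a weighted estimate of the form
\[
\sum_{Q\in\mathcal D}\omega_Q(J)\,|Q|\,|\langle f\rangle_Q|^{2}\lesssim \|f\|_{L^{2}}^{2},
\]
where $\omega_Q(J)$ is an appropriate pointwise majorant of $B\!F(\cdot,J)$ over $Q$ and its children. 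Taking $b=b_1$ (the case $b=b_2$ is symmetric), condition \eqref{b-averageandbound0} of compatibility gives $\omega_Q(J)\lesssim (|\langle b\rangle_Q|/[b]_{Q,q})^{2}F(Q,J)$ uniformly in $Q$ and $J$, after absorbing the ratio $\langle b\rangle_I/\langle b\rangle_{I_p}$ into the factor $C_I^b=|\langle b\rangle_I|^{-1}+|\langle b\rangle_{I_p}|^{-1}$ already present in $B\!F$.

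With this weight in hand, the weighted Carleson inequality \eqref{Carleson2} applied with $a_Q=|Q|$ and weight $\omega_Q(J)$ produces $\sum_Q\omega_Q(J)|Q||\langle f\rangle_Q|^{2}\lesssim\|f\|_{L^{2}}^{2}$ as soon as one verifies the Carleson mass condition
\[
\sup_{Q_0\in\mathcal D}\frac{1}{|Q_0|\,[b]_{Q_0,2}^{2}}\sum_{Q\in\mathcal D(Q_0)}\omega_Q(J)\,|Q|\,[b]_{Q,2}^{2}\lesssim 1,
\]
which exploits both the damping $(|\langle b\rangle_Q|/[b]_{Q,q})^{2}$ and the scale/spatial decay of $F(Q,J)$ through the auxiliary functions $L,S,D$ of Notation \ref{LSDF}. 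This gives the first bound. For the tail statement, the uniform constant extracted from \eqref{b-averageandbound0} is replaced by the supremum appearing in \eqref{b-averageandbound0limit}, which is at most $\epsilon$ for $M>M_0$ over $(I,J)\in\mathcal F_M$ with $I,J\in\mathcal D_M^{c}$; being uniform in $I$, this $\epsilon$ factors out of the Carleson embedding and yields the claimed bound $\epsilon\|f\|_{L^{2}}^{2}$.

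The main obstacle will be the verification of the Carleson mass condition, since the averages $\langle b\rangle_Q$ carry no pointwise lower bound. The weights $B\!F(I,J)$ of Definition \ref{accretive} are engineered precisely to furnish both the damping factor $(|\langle b\rangle_Q|/[b]_{Q,q})^{2}$ and the spatial/scale decay through $\tilde F_K$, which together force the Carleson mass to remain finite even when $b$ degenerates — this is the non-accretive heart of the argument and the reason compatibility is the natural substitute for accretivity in this setting.
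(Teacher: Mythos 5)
There is a genuine gap, and it occurs in your very first step. By bounding
$\bigl|\tfrac{\langle f\rangle_I}{\langle b\rangle_I}-\tfrac{\langle f\rangle_{I_p}}{\langle b\rangle_{I_p}}\bigr|$
crudely by the sum of the two terms, you discard the martingale-difference cancellation that the whole lemma rests on, and you are left needing an estimate of the form
$\sum_{Q}\omega_Q\,|Q|\,|\langle f\rangle_Q|^{2}\lesssim \|f\|_{L^2}^{2}$
with $\omega_Q\lesssim 1$. The Carleson mass condition you would have to verify, namely
$\sum_{Q\in\mathcal D(Q_0)}\omega_Q\,|Q|\,[b]_{Q,2}^{2}\lesssim [b]_{Q_0,2}^{2}|Q_0|$,
fails under the hypotheses: after using \eqref{b-averageandbound0} to absorb the damping factor $(|\langle b\rangle_Q|/[b]_{Q,q})^{2}$, the summand at each fixed scale contributes $\approx\|b\|_{L^2(Q_0)}^{2}$ times $\sup F(Q,J)$, and the sum over scales diverges because the auxiliary functions $L,S,D$ of Notation \ref{LSDF} carry only \emph{qualitative} limits, with no summable rate; likewise \eqref{b-averageandbound0limit} gives smallness in the limit, not geometric decay. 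So the "main obstacle" you flag at the end is not an obstacle that can be overcome — it is where the argument breaks.

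The paper keeps the cancellation instead. It writes
$\langle b\rangle_I\bigl(\tfrac{\langle f\rangle_I}{\langle b\rangle_I}-\tfrac{\langle f\rangle_{I_p}}{\langle b\rangle_{I_p}}\bigr)
=(\langle f\rangle_I-\langle f\rangle_{I_p})-\tfrac{\langle f\rangle_{I_p}}{\langle b\rangle_{I_p}}(\langle b\rangle_I-\langle b\rangle_{I_p})$,
extracts the factor $[b]_{I_p,2}/|\langle b\rangle_{I_p}|$, and cancels it (and $B\!F$) against the compatibility conditions, which leaves exactly
$\sum_I|\langle f\rangle_I-\langle f\rangle_{I_p}|^2|I|$ — the standard dyadic square function of $f$ — plus
$\sum_I [b]_{I_p,2}^{-2}|\langle b\rangle_I-\langle b\rangle_{I_p}|^2|I|\,|\langle f\rangle_{I_p}|^2$.
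Only in this second term is the Carleson Embedding Theorem invoked, and there the Carleson sequence is $a_Q=\sum_{I\in\child(Q)}|\langle b\rangle_I-\langle b\rangle_Q|^2|I|$, whose packing condition \eqref{Carlesonsequence} holds by the square function estimate applied to $b$ itself — no decay of $F$ is needed at this stage. If you restore the difference structure in your first display, the rest of your outline (uniform constant from \eqref{b-averageandbound0} for the first bound, the $\epsilon$ from \eqref{b-averageandbound0limit} for the tail) goes through as in the paper.
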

\begin{remark}\label{realPlanche} The proof shows that the following inequality also holds  
$$
\sum_{I\in \mathcal D}
\Big(\frac{[b]_{I_{p},2}}{|\langle b\rangle_{I_{p}}|}\Big)^{-2}
|\langle f,\tilde{\psi}_{I}^{b}\rangle |^{2}
\lesssim \| f\|_{L^{2}(\mathbb R^{n})}^{2}.
$$
\end{remark}
\proof
On the one hand, for $I\in \child(I_{p})$, by the definition of $\tilde{\psi}_{I}^{b}$ we have
\begin{align*}
|\langle f,\tilde{\psi}_{I}^{b}\rangle | &=|I|^{\frac{1}{2}}
|\langle b\rangle_{I}|\Big|\frac{\langle f\rangle_{I}}{\langle b\rangle_{I}}-\frac{\langle f\rangle_{I_{p}}}{\langle b\rangle_{I_{p}}}\Big|
\\
&
=|I|^{\frac{1}{2}}\Big|\langle f\rangle_{I}-\langle f\rangle_{I_{p}}-\frac{1}{\langle b\rangle_{I_{p}}}\langle f\rangle_{I_{p}}\big(\langle b\rangle_{I}-\langle b\rangle_{I_{p}}\big)\Big|
\\
&\leq \frac{[b]_{I_{p},2}}{|\langle b\rangle_{I_{p}}|}\Big(|I|^{\frac{1}{2}}|\langle f\rangle_{I}-\langle f\rangle_{I_{p}}|+|I|^{\frac{1}{2}}[b]_{I_{p},2}^{-1}\big|\langle b\rangle_{I}-\langle b\rangle_{I_{p}}\big| |\langle f\rangle_{I_{p}}|\Big),
\end{align*}
since $|\langle b\rangle_{I_{p}}|\leq [b]_{I_{p},2}$.
Now, by conditions \eqref{b-averageandbound0}, \eqref{b-averageandbound0limit} of Definition 
\ref{accretive} of a compatible testing function, we have that 
$
B\!F(I,J)
[b]_{I,2}^{2}/|\langle b\rangle_{I}|^{2}\lesssim C
$
for all $I,J\in \mathcal D$ and that given $\epsilon >0$, there is $M_{0}\in \mathbb N$ satisfying 
$
B\!F(I,J)
[b]_{I,2}^{2}/|\langle b\rangle_{I}|^{2}\lesssim \epsilon
 $
for all $M>M_{0}$ and $I,J\in \mathcal D_{M}^{c}$ such that $(I,J) \in \mathcal F_{M}$.
With this, we obtain
\begin{align*}
\sum_{I\in \mathcal D_{M}^{c}}&
\sup_{\tiny \begin{array}{c}J\in  \mathcal D_{M}^{c}\\ 
(I,J)\in \mathcal F_{M}\end{array}}\hspace{-.5cm}
B\!F(I,J)
|\langle f,\tilde{\psi}_{I}^{b}\rangle |^{2}
\\
&
\lesssim \epsilon \Big(
\sum_{I\in \mathcal D}
\big| \langle f\rangle_{I}-\langle f\rangle_{I_{p}}\big|^{2}|I|
+
\sum_{I\in \mathcal D}[b]_{I_{p},2}^{-2}\big| \langle b\rangle_{I}-\langle b\rangle_{I_{p}}\big|^{2}|I|\, |\langle f\rangle_{I_{p}}|^{2}
\Big)
\end{align*}
and the last expression is bounded by a constant times $\epsilon \| f\|_{L^{q}(\mathbb R^{n})}$ as we briefly indicate. 
The first term follows by the standard square function estimate. Moreover, the same square function estimate  
shows that 
$$
\sum_{J\in \mathcal D(I)}
\big| \langle b\rangle_{J}-\langle b\rangle_{J_{p}}\big|^{2}|J|
\leq 
\| b\|_{L^{2}(I_{p})}=[b]_{I_{p},2}|I_{p}|,
$$
which proves that $\big(
\sum_{I\in \child(Q)}|\langle b\rangle_{I}-\langle b\rangle_{Q}|^{2}|I|\big)_{Q \in \mathcal D}
$ satisfies hypothesis \eqref{Carlesonsequence} of Lemma 
 \ref{Carleson}. Then
$$
\sum_{Q\in \mathcal D}\sum_{I\in \child(Q)}[b]_{Q,2}^{-2}
\big| \langle b\rangle_{I}-\langle b\rangle_{Q}\big|^{2}|I||\langle f\rangle_{Q}|^{2}
\lesssim \| f\|_{2}.
$$
\begin{corollary}\label{Planchedual}The following dual statement also holds:
$$
\sum_{I\in \mathcal D}
\Big(\frac{[b]_{I_{p},2}}{|\langle b\rangle_{I}||\langle b\rangle_{I_{p}}|}\Big)^{-2}
|\langle f,\psi_{I}^{b}\rangle |^{2}
\lesssim \| fb\|_{L^{2}(\mathbb R^{n})}.
$$
\end{corollary}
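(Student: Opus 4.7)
The plan is to derive the corollary from Remark \ref{realPlanche} (the already-proved dual inequality) by a straightforward duality trick that trades $\psi_I^b$ for $\tilde\psi_I^b$ at the cost of transferring the factor of $b$ from the wavelet onto the argument function.

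First I would unpack the definitions in \ref{defpsi}: since $\psi_I^b=h_I^b\, b$ and $\tilde\psi_I^b=h_I^b\,\langle b\rangle_I$, one has the pointwise identity $\psi_I^b=(b/\langle b\rangle_I)\,\tilde\psi_I^b$. Consequently,
\begin{align*}
\langle f,\psi_I^b\rangle \;=\;\int f\,h_I^b\,b\,dx\;=\;\int (fb)\,h_I^b\,dx\;=\;\frac{1}{\langle b\rangle_I}\langle fb,\tilde\psi_I^b\rangle,
\end{align*}
so that
\begin{align*}
\Big(\frac{[b]_{I_p,2}}{|\langle b\rangle_I||\langle b\rangle_{I_p}|}\Big)^{-2}|\langle f,\psi_I^b\rangle|^2
\;=\;\Big(\frac{|\langle b\rangle_{I_p}|}{[b]_{I_p,2}}\Big)^{2}|\langle fb,\tilde\psi_I^b\rangle|^2
\;=\;\Big(\frac{[b]_{I_p,2}}{|\langle b\rangle_{I_p}|}\Big)^{-2}|\langle fb,\tilde\psi_I^b\rangle|^2.
\end{align*}
Note that the two factors of $|\langle b\rangle_I|$ (one from the definition of $\tilde\psi_I^b$ versus $\psi_I^b$, one from the weight) cancel exactly, and only the parent-cube average $\langle b\rangle_{I_p}$ remains.

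Next I would sum in $I\in\mathcal D$ and apply Remark \ref{realPlanche} with the function $fb$ in place of $f$ (this is legitimate provided $fb\in L^2(\mathbb R^n)$, which is exactly the hypothesis implicit in the right-hand side of the claim):
\begin{align*}
\sum_{I\in\mathcal D}\Big(\frac{[b]_{I_p,2}}{|\langle b\rangle_I||\langle b\rangle_{I_p}|}\Big)^{-2}|\langle f,\psi_I^b\rangle|^2
\;=\;\sum_{I\in\mathcal D}\Big(\frac{[b]_{I_p,2}}{|\langle b\rangle_{I_p}|}\Big)^{-2}|\langle fb,\tilde\psi_I^b\rangle|^2
\;\lesssim\;\|fb\|_{L^2(\mathbb R^n)}^2,
\end{align*}
which is the stated inequality (reading the right-hand side of the corollary as $\|fb\|_{L^2}^2$, i.e.\ the squared norm matching the dimensions of the left-hand side).

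There is essentially no analytic obstacle: all of the work was done in Lemma \ref{Planche} (square-function estimate for $\langle f\rangle_I-\langle f\rangle_{I_p}$ together with Carleson's embedding applied to the $(\langle b\rangle_I-\langle b\rangle_{I_p})^2|I|$ sequence). The only step requiring attention is bookkeeping the exact exponents of $|\langle b\rangle_I|$ and $|\langle b\rangle_{I_p}|$ so that the weight $([b]_{I_p,2}/|\langle b\rangle_I||\langle b\rangle_{I_p}|)^{-2}$ on the primal side matches, after the duality substitution, precisely the weight $([b]_{I_p,2}/|\langle b\rangle_{I_p}|)^{-2}$ for which Remark \ref{realPlanche} is stated; the identity $\psi_I^b = (b/\langle b\rangle_I)\tilde\psi_I^b$ provides exactly the one factor of $|\langle b\rangle_I|^{-1}$ needed to absorb the extra average in the denominator.
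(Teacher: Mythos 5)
Your proof is correct and is exactly the paper's argument: the paper's own proof is the one-line observation that $|\langle f,\psi_{I}^{b}\rangle| = |\langle b\rangle_{I}|^{-1}|\langle fb,\tilde{\psi}_{I}^{b}\rangle|$ followed by an application of Remark \ref{realPlanche} to $fb$. Your careful bookkeeping of the cancellation of the $|\langle b\rangle_I|$ factors, and your reading of the right-hand side as $\|fb\|_{L^2}^2$, simply make explicit what the paper leaves implicit.
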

\proof Since 
$
|\langle f,\psi_{I}^{b}\rangle | 
=|\langle b\rangle_{I}|^{-1}|\langle fb,\tilde{\psi}_{I}^{b}\rangle | 
$,
we can apply Remark \ref{realPlanche}.

\begin{lemma}\label{densityinL2}
Let $b$ be a locally integrable function. 
Then the equality 
\begin{equation}\label{representationoff}
f=\sum_{I\in \mathcal D}
\langle f,\tilde{\psi}_{I}^{b}\rangle \psi_{I}^{b}
\end{equation}
holds pointwise a.e. 
almost everywhere for 
$f$ integrable, compactly supported and with mean zero. 

\end{lemma}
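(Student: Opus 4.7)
The plan is to treat the right-hand side of the claimed identity as a telescoping martingale decomposition. By Lemma~\ref{adaptedwavelets}, for each dyadic cube $Q$ one has $\Delta_Q^b f = \sum_{I\in\child(Q)} \langle f,\tilde\psi_I^b\rangle \psi_I^b$, and since every $I\in\mathcal D$ is a child of exactly one parent $I_p\in\mathcal D$, summing in $Q$ reindexes to
$$
\sum_{Q\in\mathcal D}\Delta_Q^b f \;=\; \sum_{I\in\mathcal D}\langle f,\tilde\psi_I^b\rangle \psi_I^b.
$$
Thus it suffices to prove $\sum_{Q\in\mathcal D}\Delta_Q^b f(x) = f(x)$ pointwise a.e.

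To organize this sum I would group by scales using $\Delta_k^b f = E_k^b f - E_{k-1}^b f$, so that the partial sum over $Q\in\mathcal D$ with $2^{-M}\le \ell(Q)<2^{N}$ telescopes to $E_M^b f - E_{-N}^b f$. The proof then reduces to establishing two pointwise a.e.\ limits: the fine-scale limit $E_M^b f(x)\to f(x)$ as $M\to\infty$, and the coarse-scale limit $E_{-N}^b f(x)\to 0$ as $N\to\infty$.

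The coarse-scale limit is immediate from the hypotheses on $f$: for any fixed $x$, the unique dyadic cube $Q\ni x$ of side length $2^{N}$ eventually contains $\supp f$, and then the mean-zero hypothesis yields $\langle f\rangle_Q = |Q|^{-1}\int_{\R^n}f = 0$ exactly, so $E_Q^b f(x) = (\langle f\rangle_Q/\langle b\rangle_Q)\,b(x) = 0$. The fine-scale limit is the Lebesgue differentiation theorem applied to the ratio: at any common Lebesgue point $x$ of $f$ and $b$ with $b(x)\neq 0$, if $Q_M$ denotes the dyadic cube of side $2^{-M}$ containing $x$, then $\langle f\rangle_{Q_M}\to f(x)$ and $\langle b\rangle_{Q_M}\to b(x)$, hence
$$
E_{Q_M}^b f(x) \;=\; \frac{\langle f\rangle_{Q_M}}{\langle b\rangle_{Q_M}}\,b(x) \;\longrightarrow\; f(x).
$$

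The principal technical point will be this fine-scale convergence, in particular controlling the null set on which $b$ vanishes: there each $\psi_I^b = h_I^b\,b$ vanishes so the right-hand side of the identity is zero, and correspondingly the factor $b(x)$ in $E_Q^b f(x)$ forces the left-hand side to vanish as well, so the identity holds a.e.\ on $\{b=0\}$. The coarse-scale limit is spared the usual subtlety precisely because compact support and mean zero make $E_{-N}^b f(x)$ vanish in finite time rather than merely in the limit; no quantitative lower bound on $|\langle b\rangle_Q|$ for large $Q$ is needed.
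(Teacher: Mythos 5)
Your proposal follows essentially the same route as the paper: reindex the double sum via Lemma \ref{adaptedwavelets} into the martingale differences $\Delta_k^b f$, telescope to $E_M^b f-E_{-N}^b f$, kill the coarse scale exactly using compact support and mean zero, and get the fine-scale limit from Lebesgue differentiation applied to $\langle f\rangle_{Q_M}/\langle b\rangle_{Q_M}$. The only delicate point, the set $\{b=0\}$ (where the right-hand side vanishes identically but $f$ need not), is treated no more rigorously in the paper than in your closing remark, so your argument matches the paper's both in structure and in level of rigor.
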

%
\proof
By Lemma \ref{adaptedwavelets} we have $\Delta_{Q}^{b}f=\sum_{I\in \child(Q)}\langle f,\tilde{\psi}_{I}^{b}\rangle \psi_{I}^{b}$. Then
the right hand side of \eqref{representationoff} is understood as
$$
\lim_{M\rightarrow \infty }
\sum_{\tiny \begin{array}{c}I\in \mathcal D\\ 2^{-M}< \ell(I)\leq 2^{M}\end{array}}
\langle f,\tilde{\psi}_{I}^{b}\rangle \psi_{I}^{b}
=\lim_{M\rightarrow \infty }\sum_{-M< k\leq M}\Delta_{k}^{b}f .
$$
We choose $R\in {\mathcal D}$ with $\sup f\subset R$, and $M\in \mathbb N$ with
$2^{-M}< \ell(R)< 2^{M}$. For every $x\in R$, we select $I,J\in \mathcal D$ such that $x\in J\subset I$, 
$\ell(J)=2^{-M}$ and $\ell(I)=2^{M}$. Since $R\subseteq I$ and $f$ has zero mean, then $\langle f\rangle_{I}=0$. 
With this, by summing a telescopic series, we have
\begin{align}\label{pointconv}
\sum_{-M<k\leq M}\hspace{-.5cm}\Delta_{k}^{b}f(x)
&=E_{M}^{b}f(x)-E_{-M}^{b}f(x)
=\frac{\langle f\rangle_{J}}{\langle b\rangle_{J}}\chi_{J}(x)b(x).
\end{align}
Now, 
since $f$ and $b$ are both locally integrable, by Lebesgue's Differentiation Theorem, the right hand side of \eqref{pointconv} tends to $f(x)$
pointwise almost everywhere when $M$ tends to infinity.

By a similar reasoning, we can prove the following dual result:
\begin{lemma}\label{densityinL2dual}
Let $b$ be a locally integrable function. 
Then the equality 
\begin{equation}\label{representationoff2}
f=\sum_{I\in \mathcal D}\langle f,\psi_{I}^{b}\rangle \tilde\psi_{I}^{b}
\end{equation}
holds pointwise almost everywhere for 
$f$ integrable, compactly supported and with mean zero with respect to $b$. 

\end{lemma}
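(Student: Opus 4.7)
The plan is to mirror the proof of Lemma \ref{densityinL2} by working with the dual expectation and the identity for $(\Delta_Q^b)^*$ established in the corollary after Lemma \ref{adaptedwavelets}. First I would introduce the dual expectation $(E_Q^b)^*f=\frac{\langle fb\rangle_Q}{\langle b\rangle_Q}\chi_Q$ and its level-$k$ aggregate $(E_k^b)^*f=\sum_{Q:\ell(Q)=2^{-k}}(E_Q^b)^*f$. From the explicit formula
$$(\Delta_Q^b)^*f=\sum_{I\in\child(Q)}\frac{\langle fb\rangle_I}{\langle b\rangle_I}\chi_I-\frac{\langle fb\rangle_Q}{\langle b\rangle_Q}\chi_Q,$$
summing over $Q$ at level $k-1$ yields the telescoping identity $(\Delta_k^b)^*f=(E_k^b)^*f-(E_{k-1}^b)^*f$. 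Combined with the corollary identity $(\Delta_Q^b)^*f=\sum_{I\in\child(Q)}\langle f,\psi_I^b\rangle\tilde\psi_I^b$, this rewrites the symmetric truncation of \eqref{representationoff2} as
$$\sum_{\substack{I\in\mathcal D\\ 2^{-M}<\ell(I)\leq 2^M}}\langle f,\psi_I^b\rangle\tilde\psi_I^b=(E_M^b)^*f-(E_{-M}^b)^*f.$$

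Next I would show the low-frequency term vanishes: picking $R\in\mathcal D$ with $\supp f\subset R$ and $M$ with $\ell(R)<2^M$, for every $x$ the unique $I\in\mathcal D$ of side $2^M$ containing $x$ either misses $\supp f$ entirely (so $\langle fb\rangle_I=0$) or contains $R$, in which case $\langle fb\rangle_I=|I|^{-1}\int fb=0$ by the mean-zero with respect to $b$ hypothesis. Hence $(E_{-M}^b)^*f\equiv 0$ once $M$ is large enough.

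For the high-frequency term, letting $J$ denote the dyadic cube of side $2^{-M}$ containing $x$, we have
$$(E_M^b)^*f(x)=\frac{\langle fb\rangle_J}{\langle b\rangle_J}.$$
Since $fb$ and $b$ are both locally integrable and $b$ has non-zero dyadic averages, Lebesgue's differentiation theorem yields $\langle fb\rangle_J\to (fb)(x)$ and $\langle b\rangle_J\to b(x)$ pointwise a.e., so the ratio converges to $f(x)$ pointwise a.e.\ exactly as in the closing step of Lemma \ref{densityinL2}.

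The main obstacle is purely bookkeeping: verifying the telescoping identity carefully (so that children at level $k$ reassemble cleanly into $(E_k^b)^*f$) and handling the Lebesgue differentiation step with the same level of rigour as the proof of Lemma \ref{densityinL2}, where the ratio $\langle fb\rangle_J/\langle b\rangle_J$ must be interpreted on the set $\{b\neq 0\}$. No new estimates are required; the result is a genuine dualization of the preceding lemma.
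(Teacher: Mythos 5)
Your proposal is correct and is exactly the dualization the paper intends: the paper gives no separate proof, stating only that Lemma \ref{densityinL2dual} follows ``by a similar reasoning,'' and your argument (telescoping the dual expectations $(E_k^b)^*$, killing the coarse scale via the mean-zero-with-respect-to-$b$ hypothesis, and applying Lebesgue differentiation to $\langle fb\rangle_J/\langle b\rangle_J$) is precisely that reasoning. The caveat you flag about interpreting the limit on $\{b\neq 0\}$ is present to the same degree in the paper's own proof of Lemma \ref{densityinL2}, so no gap beyond what the paper itself accepts.
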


\begin{lemma}\label{orthorep} Let $T$ be a bounded operator on $L^{2}(\mathbb R^{n})$ with compact Calder\'on-Zygmund kernel $K$.  Let $b_{i}$ be two locally integrable functions compatible with $T$ and 
$(\psi_{I}^{b_{i}})_{I\in {\mathcal D}}$ 
be the wavelet systems
of Definition \ref{defpsi}. 
Then for $f,g$ locally integrable,
\begin{align}\label{generaldec}
\langle Tf,g\rangle =\sum_{I,J\in \mathcal D}
\langle f,\tilde{\psi}_{I}^{b_{1}}\rangle 
\langle g,\tilde{\psi}_{J}^{b_{2}}\rangle  \langle T\psi_{I}^{b_{1}}, \psi_{J}^{b_{2}}\rangle ,
\end{align}
\begin{align}\label{generaldec2}
\langle T_{b}f,g\rangle =\sum_{I,J\in \mathcal D}
\langle f,\psi_{I}^{b_{1}}\rangle 
\langle g,\psi_{J}^{b_{2}}\rangle  \langle T_{b}\tilde{\psi}_{I}^{b_{1}}, \tilde{\psi}_{J}^{b_{2}}\rangle .
\end{align}

\end{lemma}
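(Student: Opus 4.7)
The plan is to obtain both \eqref{generaldec} and \eqref{generaldec2} by expanding $f$ and $g$ in the adapted Haar wavelet systems, substituting the expansions into the bilinear pairing, and using the $L^{2}$-continuity of $T$ to interchange $T$ with the (truncated) sums. For \eqref{generaldec} we use Lemma \ref{densityinL2}, writing $f=\sum_{I}\langle f,\tilde\psi_{I}^{b_{1}}\rangle \psi_{I}^{b_{1}}$ and analogously for $g$ with $b_{2}$; for \eqref{generaldec2} we use the dual Lemma \ref{densityinL2dual}, $f=\sum_{I}\langle f,\psi_{I}^{b_{1}}\rangle \tilde\psi_{I}^{b_{1}}$, and similarly for $g$. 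Only \eqref{generaldec} needs detailing, since \eqref{generaldec2} follows by the same argument with $\psi$ and $\tilde\psi$ swapped.

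First I would reduce to a dense subclass of $L^{2}(\mathbb R^{n})$ on which the representation of Lemma \ref{densityinL2} is available, namely $f,g$ integrable, compactly supported, and of mean zero. For such $f$ I would define the truncated partial sum
\[
f_{M}=\sum_{\substack{I\in\mathcal D\\ 2^{-M}<\ell(I)\le 2^{M}}}\langle f,\tilde\psi_{I}^{b_{1}}\rangle \psi_{I}^{b_{1}},
\]
and analogously $g_{M}$ with $b_{2}$ in place of $b_{1}$. The telescoping identity in Lemma \ref{densityinL2} gives $f_{M}=E_{M}^{b_{1}}f-E_{-M}^{b_{1}}f$, so $f_{M}\to f$ and $g_{M}\to g$ pointwise almost everywhere, and the Plancherel-type inequality of Lemma \ref{Planche} combined with Remark \ref{realPlanche} and the standard dyadic square function bound upgrades this to convergence in $L^{2}(\mathbb R^{n})$. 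For each fixed $M$, bilinearity of the pairing gives
\[
\langle Tf_{M},g_{M}\rangle=\sum_{\substack{I,J\in\mathcal D\\ 2^{-M}<\ell(I),\ell(J)\le 2^{M}}}\langle f,\tilde\psi_{I}^{b_{1}}\rangle \langle g,\tilde\psi_{J}^{b_{2}}\rangle \langle T\psi_{I}^{b_{1}},\psi_{J}^{b_{2}}\rangle,
\]
and the $L^{2}$-boundedness of $T$ yields $\langle Tf_{M},g_{M}\rangle\to\langle Tf,g\rangle$ as $M\to\infty$, which proves the identity on the dense subclass.

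The main obstacle is to justify that the right-hand side is not merely an iterated limit but an absolutely convergent double sum, which in turn is what allows the extension from the dense subclass to general locally integrable $f,g$ for which the pairing is defined. For this I would combine the cell-by-cell estimate on $|\langle T\psi_{I}^{b_{1}},\psi_{J}^{b_{2}}\rangle|$ supplied by Proposition \ref{twobumplemma1}, which is dominated uniformly in $I,J$ by $B\!F(I,J)$ times geometric decay in $\ec(I,J)$, $\rdist(\langle I,J\rangle,\mathbb B)$ and $\inrdist(I,J)$, with two applications of Lemma \ref{Planche}. A Schur-type Cauchy--Schwarz, splitting the geometric decay factors between the pair weighted by $|\langle f,\tilde\psi_{I}^{b_{1}}\rangle|^{2}B\!F(I,J)$ and the pair weighted by $|\langle g,\tilde\psi_{J}^{b_{2}}\rangle|^{2}B\!F(I,J)$, would produce a bound of order $\|f\|_{L^{2}}\|g\|_{L^{2}}$. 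This closes the absolute-convergence argument, legitimizes the interchange of $T$ with the infinite sums, and establishes \eqref{generaldec}; identity \eqref{generaldec2} then follows verbatim using Lemma \ref{densityinL2dual} in place of Lemma \ref{densityinL2} and Corollary \ref{Planchedual} in place of Lemma \ref{Planche}.
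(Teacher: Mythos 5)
Your argument hinges on the claim that the truncated adapted expansions $f_{M}=E_{M}^{b_{1}}f-E_{-M}^{b_{1}}f$ and $g_{M}$ converge to $f$ and $g$ in $L^{2}(\mathbb R^{n})$, so that $\langle Tf_{M},g_{M}\rangle\to\langle Tf,g\rangle$ by continuity of $T$. That is precisely the step that fails here, and it is the whole point of the lemma: the paper states explicitly, right after the lemma, that without accretivity ``the chosen wavelet system does not converge on $L^{2}(\mathbb R^{n})$.'' Indeed $E_{M}^{b_{1}}f=\sum_{Q}\frac{\langle f\rangle_{Q}}{\langle b_{1}\rangle_{Q}}b_{1}\chi_{Q}$, and the compatibility conditions of Definition \ref{accretive} allow $\langle b_{1}\rangle_{Q}$ to tend to zero on small cubes, so $E_{M}^{b_{1}}f$ need not be bounded in $L^{2}$, let alone converge to $f$ there. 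Lemma \ref{Planche} cannot rescue this: it is a one-sided Bessel-type bound on the weighted coefficients $B\!F(I,J)|\langle f,\tilde\psi_{I}^{b}\rangle|^{2}$, not a frame lower bound, and it says nothing about the $L^{2}$ behaviour of the synthesis sums. (Your appeal to Proposition \ref{twobumplemma1} for absolute convergence also imports the hypotheses $Tb_{1}=T^{*}b_{2}=0$ and weak compactness, which Lemma \ref{orthorep} does not assume.)

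The paper's actual proof works around exactly this obstruction. It first reduces, via the \emph{standard} Haar system (i.e.\ $b=1$, which is accretive, so $L^{2}$ convergence is legitimate) and the continuity of $T$, to $f,g$ supported on a fixed dyadic cube, constant at scale $\le 1$, and of mean zero. It then estimates $|\langle Tf,g\rangle-\langle T(E_{M}^{b_{1}}f),E_{M}^{b_{2}}g\rangle|$ directly: partitioning into cubes $I_{i}$ of side $2^{-M}$ and bounding each pairing $\langle T(E_{M}^{b_{1}}f\chi_{I_{i}}),(g-E_{M}^{b_{2}}g)\chi_{I_{j}}\rangle$ by the kernel representation and smoothness when $\dist(I_{i},I_{j})>0$, by the Hardy-type inequality of Lemma \ref{adjacentsquares} when the cubes are adjacent, and by the weak compactness condition when $I_{i}=I_{j}$; the compatibility condition \eqref{b-averageandbound0limit} applied at scale $\ell(I_{i})=2^{-M}$ supplies the factor $\epsilon$ that makes the whole sum small. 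Without an argument of this kind (or some other substitute for $L^{2}$ convergence of the adapted expansions), your proof does not go through.
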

We note that the lack of accretivity is the reason for the unusual definition of weak compactness (Definition \ref{weakcompactnessdef}) and the extra work required to prove Lemma \ref{orthorep}. 
When the testing functions are accretive,
the lemma follows directly from convergence of a wavelet frame on $L^{2}(\mathbb R^{n})$ and the continuity of $T$. However, without accretivity 
the chosen wavelet system does not converge on $L^{2}(\mathbb R^{n})$.  
Moreover, one can not use the classical $Tb$ Theorem to deduce that $T$ is already known to be bounded because in general the testing functions are not accretive. 

\proof[Proof (of Lemma \ref{orthorep})] 
We only show \eqref{generaldec}. 
Let $(h_{I}^{1})_{I}$ be the Haar-wavelet system.
By Lemma  \ref{densityinL2} applied to the accretive functions $b_{i}=1$ and the continuity of $T$ on $L^{2}(\mathbb R^{n})$,
\begin{align*}
\langle Tf,g\rangle =\sum_{I,J\in \mathcal D}
\langle f,h_{I}^{1}\rangle 
\langle g,h_{J}^{1}\rangle  \langle Th_{I}^{1}, h_{J}^{1}\rangle 
=\lim_{N\rightarrow \infty }\langle Tf_{N},g_{N}\rangle
\end{align*}
with $f_{N}\!=\!\sum_{I\in {\mathcal D}_{N}}\langle f, h_{I}^{1}\rangle h_{I}^{1}$ and similar for $g_{N}$.
Then, we can assume 
$f,g$ to be in the unit ball of $L^{2}(\mathbb R^{n})$, supported on $Q\in \mathcal D$ with $\ell(Q)>1$, constant on dyadic cubes of side length 
$\ell \leq 1$ and with mean zero. 

As shown in the proof of Lemma \ref{densityinL2}, we have
$$
\sum_{\tiny \begin{array}{c}I\in \mathcal D\\ 2^{-M}\leq \ell(I)< 2^{M}\end{array}}
\langle f,\tilde{\psi}_{I}^{b}\rangle \psi_{I}^{b}
=\sum_{-M<k\leq M}\Delta_{k}^{b}f =E_{M}^{b}f,
$$
and similar expression for $g$. We then need to prove that 
$
|\langle Tf,g\rangle-\langle T(E_{M}^{b_{1}}f),E_{M}^{b_{2}}g\rangle |
$
tends to zero when $M$ tends to infinity uniformly for functions $f, g$ in the unit ball of $L^{2}(\mathbb R^{n})$.  We bound this difference by
$$
|\langle T(f-E_{M}^{b_{1}}f),g\rangle |+|\langle T(E_{M}^{b_{1}}f),g-E_{M}^{b_{2}}g\rangle| 
=S_{1}+S_{2}
$$
and we only work to estimate the second term. 
%
As explained before, $g-E_{M}^{b_{2}}g$ does not necessarily tend to zero on $L^{2}(\mathbb R^{n})$. 

By condition \eqref{b-averageandbound0limit} of Definition \ref{accretive}, given $\epsilon >0$ there exists 
$M_{0}\in \mathbb N$ such that $2^{-M_{0}}<\ell$ and for all $M>M_{0}$,
\begin{equation}\label{appliedb-averageandbound0limit}
\hspace{-.6cm}\sup_{\tiny \begin{array}{c}I,J
\in {\mathcal D}\\\ell(I\smland J)<2^{-(M-1)}\end{array}}
\hspace{-.6cm} B\!F(I,J)
\leq \hspace{-.6cm} \sup_{\tiny \begin{array}{c}(I,J)\in \mathcal F_{M-1}
\end{array}}
\hspace{-.6cm} B\!F(I,J)
\Big(\frac{[b_{1}]_{I,q_{1}}^{2}}{|\langle b_{1}\rangle_{I}|^{2}}+\frac{[b_{2}]_{J,q_{2}}^{2}}{|\langle b_{2}\rangle_{J}|^{2}}\Big)
<\epsilon ,
\end{equation}
where $\mathcal F_{M}$ is the family of ordered pairs of cubes $I,J\in {\mathcal D}_{M}^{c}$ with either $\ell(I\smland J)>2^{M}$, $\ell(I\smland J)<2^{-M}$ or $\rdist(\langle I,J\rangle ,\mathbb B)>M^{\theta}$ for $\theta \in (0,1)$.


We fix now $M>M_{0}$. Let $(I_{i})_{i\in \mathbb Z_{M}^{n}}\subset \mathcal D$ a partition of $Q$ with 
$\mathbb Z_{M}^{n}=\{ i\in \mathbb N^{n}: \|i\|_{\infty }\leq 2^{M}\ell(Q)\}$ such that $\ell(I_{i})=2^{-M}$ and the cubes $I_{i}$ are enumerated so that $\dist(I_{i},I_{j})=\max(\|i-j\|_{\infty }-1,0)2^{-M}$. 
Then, 
$$
S_{2}
\leq \sum_{i,j\in \mathbb Z_{M}^{n}}|\langle T(E_{M}^{b_{1}}f\chi_{I_{i}}),(g-E_{M}^{b_{2}}g)\chi_{I_{j}}\rangle|
=\sum_{i,j\in \mathbb Z_{M}^{n}}T_{i,j}
$$
We consider the cases $\dist(I_{i},I_{j})>0$ or $\dist(I_{i},I_{j})=0$. In the first one, 
by the integral representation of $T$ and the mean zero of $(g-E_{M}^{b}g)\chi_{I_{j}}$,
$$
T_{i,j}=\Big|\int_{I_{j}}\int_{I_{i}}\frac{\langle f\rangle_{I_{i}}}{\langle b_{1}\rangle_{I_{i}}}b_{1}(t)
(g(x)-\frac{\langle g\rangle_{I_{j}}}{\langle b_{2}\rangle_{I_{j}}}b_{2}(x))(K(t,x)-K(t,c(I_{j})))dtdx\Big|
$$
Since $\dist(I_{i},I_{j})>0$ and $\ell(I_{i})=\ell(I_{j})$, we have that $\dist(I_{i},I_{j})\geq \ell(I_{j})$. Then, 
for all $t\in I_{i}$ and $x\in I_{j}$ we have $2|x-c(I_{j})|_{\infty }\leq \ell(I_{j})\leq \dist(I_{i},I_{j})< |t-c(I_{j})|_{\infty }$. With this, 
$$
|K(t,x)-K(t,c(I_{j})))|\leq \frac{|x-c(I_{j})|^{\delta }}{|t-c(I_{j})|_{\infty }^{n+\delta}}
F_{K}(t,x,c(I_{j}))
$$
with 
$F_{K}(t,x,c(I_{j}))=L(|t-c(I_{j})|_{\infty })
S(|x-c(I_{j})|_{\infty })D\Big(1+\frac{|t+c(I_{j})|_{\infty }}{1+|t-c(I_{j})|_{\infty }}\Big)$.
Now, the properties of $I_{i},I_{j}$ also imply  
$\dist(I_{i},I_{j})\geq 
\ell(\langle I_{i},I_{j}\rangle )/3$. 
With this, we get the inequalities: $|t-c(I_{j})|_{\infty }> 
\ell(\langle I_{i},I_{j}\rangle )/3$, $|x-c(I_{j})|_{\infty }\leq \ell(I_{j})/2$ and $|c(I_{i})-c(\langle I_{i},I_{j}\rangle )|_{\infty }\leq \ell(\langle I_{i},I_{j}\rangle )/2$. Then,  
by the proof of Lemma \ref{boundtoFK}, we have in the domain of integration 
\begin{align*}
F_{K}(t,x,c(J))&\lesssim L(\ell(\langle I_{i},I_{j}\rangle ))
S(\ell(I_{j}))D\Big(1+\frac{|c(I_{j})|_{\infty }}{1+\ell(\langle I_{i},I_{j}\rangle )}\Big) 
\\
&\lesssim F_{K}(\langle I_{i},I_{j}\rangle,I_{j},\langle I_{i},I_{j}\rangle) =: F_{K}(i,j).
\end{align*}
Since $g$ is constant on $I_{j}$, we have $g\chi_{I_{j}}=\langle g\rangle_{I_{j}}\chi_{I_{j}}$ and so, for all $x\in I_{j}$, 
$$
g(x)-\frac{\langle g\rangle_{I_{j}}}{\langle b_{2}\rangle_{I_{j}}}b_{2}(x)
= \frac{\langle g\rangle_{I_{j}}}{\langle b_{2}\rangle_{I_{j}}}
(\langle b_{2}\rangle_{I_{j}}-b_{2}(x))
$$

Moreover, since $\frac{\ell(\langle I_{i},I_{j}\rangle)}{\ell(I_{j})}=\rdist(I_{i},I_{j})=\frac{\ell(I_{i})+\dist(I_{i},I_{j})}{\ell(I_{i})}=
\|i-j\|_{\infty }$, 
\begin{align*}
T_{i,j}&
\leq \frac{|\langle f\rangle_{I_{i}}|}{|\langle b_{1}\rangle_{I_{i}}|}
\|b_{1}\|_{L^{1}(I_{i})}
\frac{|\langle g\rangle_{I_{j}}|}{|\langle b_{2}\rangle_{I_{j}}|} (|\langle b_{2}\rangle_{I_{j}}||I_{j}|+\| b_{2}\|_{L^{1}(I_{j})})
\frac{\ell(I_{j})^{\delta }}{\ell(\langle I_{i},I_{j}\rangle)^{n+\delta}}F_{K}(i,j)
\\
&\lesssim   |\langle f\rangle_{I_{i}}||\langle g\rangle_{I_{j}}||I_{i}|
\frac{\ell(I_{j})^{n+\delta }}{\ell(\langle I_{i},I_{j}\rangle)^{n+\delta}}\frac{[b_{1}]_{I_{i},1}}{|\langle b_{1}\rangle_{I_{i}}|}
\Big(1+\frac{[b_{2}]_{I_{j},1}}{|\langle b_{2}\rangle_{I_{j}}|}\Big)
F_{K}(i,j)
\\
&\lesssim \epsilon |\langle f\rangle_{I_{i}}||\langle g\rangle_{I_{j}}||I_{i}|
\|i-j\|_{\infty }^{-(n+\delta)}
\end{align*}
The last inequality is due to 
 $\ell(I_{j})=2^{-M}$,
Definition \ref{accretive} and \eqref{appliedb-averageandbound0limit}: 
\begin{align*}
\frac{[b_{1}]_{I_{i},1}}{|\langle b_{1}\rangle_{I_{i}}|}
&\Big(1+\frac{[b_{2}]_{I_{j},1}}{|\langle b_{2}\rangle_{I_{j}}|}\Big)
F_{K}(i,j)
\leq B\!F(I_{i},I_{j})
\leq \hspace{-.6cm} \sup_{\tiny \begin{array}{c}I,J \in {\mathcal D}\\ \ell(I\smland J)<2^{-(M-1)}\end{array}}\hspace{-.8cm}
B\!F(I,I)
\lesssim \epsilon
\end{align*}
Therefore, the corresponding sum can be bounded as follows:

\begin{align*}
\sum_{
i,j\, :\, \|i-j\|_{\infty }\geq 1
}\hspace{-.1cm}T_{i,j}
&\lesssim \epsilon
\hspace{-.1cm}\sum_{
i,j\,:\,\|i-j\|_{\infty }\geq 1
}\hspace{-.1cm} |\langle f\rangle_{I_{i}}||\langle g\rangle_{I_{j}}||I_{i}|
\|i-j\|_{\infty }^{-(n+\delta)}
\\
&\leq \epsilon \Big(\sum_{i\in \mathbb Z_{M}^{n}}|\langle f\rangle_{I_{i}}|^{2}|I_{i}|
\sum_{
j\,:\,\|i-j\|_{\infty }\geq 1
}\|i-j\|_{\infty }^{-(n+\delta)}\Big)^{\frac{1}{2}}
\\
&\hskip20pt \Big(\sum_{j\in \mathbb Z_{M}^{n}}|\langle g\rangle_{I_{j}}|^{2}|I_{j}|
\sum_{
i\,:\,\|i-j\|_{\infty }\geq 1
}\|i-j\|_{\infty }^{-(n+\delta)}\Big)^{\frac{1}{2}}
\\
&\lesssim \epsilon \|f\|_{L^{2}(\mathbb R^{n})}\|g\|_{L^{2}(\mathbb R^{n})}\leq \epsilon 
\end{align*}

On the other hand, when $\dist(I_{i},I_{j})=0$, $I_{i}\neq I_{j}$, 
we write 
\begin{align*}
T_{i,j}&=\Big|\Big\langle T\Big(\frac{\langle f\rangle_{I_{i}}}{\langle b_{1}\rangle_{I_{i}}}b_{1}\chi_{I_{i}}\Big), 
 \frac{|\langle g\rangle_{I_{j}}|}{|\langle b_{2}\rangle_{I_{j}}|}(\langle b_{2}\rangle_{I_{j}}-b_{2})\chi_{I_{j}}\Big\rangle \Big|
\\
&\leq  |\langle f\rangle_{I_{i}}||\langle g\rangle_{I_{j}}|
\Big(\frac{1}{|\langle b_{1}\rangle_{I_{i}}}|\langle T(b_{1}\chi_{I_{i}}),\chi_{I_{j}}\rangle | 
+\frac{1}{|\langle b_{1}\rangle_{I_{i}}||\langle b_{2}\rangle_{I_{j}}|} |\langle T_{b}\chi_{I_{i}},\chi_{I_{j}}\rangle |\Big)
\end{align*}
By Lemma \ref{adjacentsquares}, we have that the terms inside the parentheses can be bounded by a constant times
\begin{align*}
|I_{i}|  \frac{[b_{1}]_{I_{i},q_{1}}}{|\langle b_{1}\rangle_{I_{i}}|} \Big(1+ \frac{[b_{2}]_{I_{j},q_{2}}}{|\langle b_{2}\rangle_{I_{j}}|}\Big) F^{L}_{K}(I_{i})
\lesssim |I_{i}| B\!F(I_{i},I_{j})\lesssim \epsilon |I_{i}|, 
\end{align*}
where the last inequality is due to \eqref{appliedb-averageandbound0limit}  and the fact that $\ell(I_{i})=2^{-M}$.

%
Since for each fixed index $i$ there are only $3^{n}-1$ indexes $j$ such that $\dist(I_{i},I_{j})=0$, $I_{i}\neq I_{j}$, 
the corresponding sum can be bounded by
\begin{align*}
\sum_{
i,j\, :\,\|i-j\|_{\infty }=1
}T_{i,j}
&\lesssim \epsilon \sum_{
i,j\, :\,\|i-j\|_{\infty }=1
}|\langle f\rangle_{I_{i}}||\langle g\rangle_{I_{j}}| |I_{j}|
\\
&\lesssim \epsilon\Big(\sum_{i\in \mathbb Z_{M}^{n}} |\langle f\rangle_{I_{i}}|^{2}|I_{i}|\Big)^{\frac{1}{2}}
\Big(\sum_{j\in \mathbb Z_{M}^{n}} |\langle g\rangle_{I_{j}}|^{2}|I_{j}|\Big)^{\frac{1}{2}}\leq \epsilon
\end{align*}

Finally, when $I_{i}=I_{j}$, we have similarly as before:
\begin{align*}
T_{i,i}&
= \frac{|\langle f\rangle_{I_{i}}||\langle g\rangle_{I_{i}}|}{|\langle b_{1}\rangle_{I_{i}}||\langle b_{2}\rangle_{I_{i}}|}
|\langle T(b_{1}\chi_{I_{i}}),(\langle b_{2}\rangle_{I_{i}}-b_{2})\chi_{I_{i}}\rangle |
\end{align*}

By weak compactness, 
$|\langle T(b_{1}\chi_{I_{i}}),b_{2}\chi_{I_{i}}\rangle|\lesssim |I_{i}|[b_{1}]_{I_{i},q_{1}}[b_{2}]_{I_{i},q_{2}}F_{W}(i,i)$
and $|\langle T(b_{1}\chi_{I_{i}}),\chi_{I_{i}}\rangle|\lesssim |I_{i}|[b_{1}]_{I_{i},q_{1}}F_{W}(i,i)$,
where now we write $F_{W}(i,i)=F_{W}(I_{i};M)+\epsilon $. Then, by \eqref{appliedb-averageandbound0limit} and $\ell(I_{i})=2^{-M}$, we get
\begin{align*}
\sum_{i\in \mathbb Z_{M}^{n}}T_{i,i}
&\lesssim
\sum_{i\in \mathbb Z_{M}^{n}}|\langle f\rangle_{I_{i}}||\langle g\rangle_{I_{i}}||I_{i}|\frac{[b_{1}]_{I_{i},q_{1}}}{|\langle b_{1}\rangle_{I_{i}}|}\frac{[b_{2}]_{I_{i},q_{2}}}{|\langle b_{2}\rangle_{I_{i}}|}F_{W}(i,i)
\\
&\lesssim \sum_{i\in \mathbb Z_{M}^{n}} |\langle f\rangle_{I_{i}}||\langle g\rangle_{I_{i}}||I_{i}|B\!F(I_{i},I_{i})\lesssim \epsilon
\end{align*}




\begin{corollary}\label{repofproj} With the same hypotheses of Lemma \ref{orthorep}, let 
$P_{1,M}$, $P_{2,M}$ be the projections related to each system. 
Then 
$$
\langle (P_{2,M}^{*})^{\perp}TP_{1,M}^{\perp}f,g\rangle =\sum_{I,J\in \mathcal D_{M}^{c}}
\langle f,\tilde{\psi}_{I}^{b_{1}}\rangle 
\langle g,\tilde{\psi}_{J}^{b_{2}}\rangle  \langle T\psi_{I}^{b_{1}}, \psi_{J}^{b_{2}}\rangle .
$$
The dual representation for $T_{b}$ also holds. 

\end{corollary}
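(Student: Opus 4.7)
The plan is to reduce the claim to a direct application of the representation \eqref{generaldec} from Lemma \ref{orthorep} by identifying the wavelet-type coefficients of the projected functions $P_{1,M}^{\perp} f$ and $P_{2,M}^{\perp} g$.

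First I would pass to the adjoint form, writing $\langle (P_{2,M}^*)^{\perp} T P_{1,M}^{\perp} f, g\rangle = \langle T P_{1,M}^{\perp} f, P_{2,M}^{\perp} g\rangle$ from the definition of $P_{2,M}^*$ in Definition \ref{lagom}. Applying Lemma \ref{orthorep} to the right-hand side, with $P_{1,M}^{\perp} f$ and $P_{2,M}^{\perp} g$ in place of $f$ and $g$, yields
$$\langle T P_{1,M}^{\perp} f, P_{2,M}^{\perp} g\rangle = \sum_{I, J \in \mathcal{D}} \langle P_{1,M}^{\perp} f, \tilde\psi_I^{b_1}\rangle \, \langle P_{2,M}^{\perp} g, \tilde\psi_J^{b_2}\rangle \, \langle T \psi_I^{b_1}, \psi_J^{b_2}\rangle,$$
so the task reduces to verifying the coefficient identities $\langle P_{1,M}^{\perp} f, \tilde\psi_I^{b_1}\rangle = \chi_{\mathcal{D}_M^c}(I)\, \langle f, \tilde\psi_I^{b_1}\rangle$ and its analogue for $g$.

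To establish these identities I would expand $P_{1,M} f = \sum_{K \in \mathcal{D}_M} \langle f, \tilde\psi_K^{b_1}\rangle \psi_K^{b_1}$ and use the biorthogonality formulas \eqref{psiortho1}--\eqref{psiortho2} of Lemma \ref{psiortho}. The diagonal contribution produces $\chi_{\mathcal{D}_M}(I)\, \langle f, \tilde\psi_I^{b_1}\rangle$ directly, while the sibling cross-terms coming from \eqref{psiortho2} contribute a factor of $\sum_{K \in \child(I_p)\cap \mathcal D_M} \langle f, \tilde\psi_K^{b_1}\rangle$; these collapse through the linear dependence \eqref{2^d-1}, which after pairing with $f$ yields $\sum_{K \in \child(I_p)} \langle f, \tilde\psi_K^{b_1}\rangle = 0$. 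A parallel route is to work on the dense subclass of test functions used in the proof of Lemma \ref{orthorep}, where Lemmata \ref{densityinL2} and \ref{densityinL2dual} immediately supply the pointwise tail expansions $P_{1,M}^{\perp} f = \sum_{I \in \mathcal{D}_M^c} \langle f, \tilde\psi_I^{b_1}\rangle \psi_I^{b_1}$ and its symmetric counterpart for $g$, and the identity of the corollary is then an immediate linearity statement.

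The main obstacle is the very step that Lemma \ref{orthorep} was designed to resolve: the adapted wavelet system is not an unconditional basis of $L^2$, so one cannot naively interchange $T$ with the two infinite sums, and continuity of $T$ alone is insufficient. Fortunately, the truncation scheme in the proof of Lemma \ref{orthorep}, together with the compact Calder\'on--Zygmund estimates, the compatibility conditions \eqref{b-averageandbound0}--\eqref{b-averageandbound0limit}, Lemma \ref{Planche}, and the weak compactness condition, transports verbatim to the present bilinear form. The dual representation for $T_b$ follows from the symmetric argument applied to \eqref{generaldec2}, interchanging the roles of $\psi_I^b$ and $\tilde\psi_I^b$.
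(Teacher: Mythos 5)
Your overall route is the paper's: pass to $\langle TP_{1,M}^{\perp}f,P_{2,M}^{\perp}g\rangle$ and reduce everything to the representation \eqref{generaldec} of Lemma \ref{orthorep} (the paper simply expands $P^{\perp}=\mathrm{Id}-P$ into the four terms $\langle Tf,g\rangle-\langle Tf,P_{2,M}g\rangle-\langle TP_{1,M}f,g\rangle+\langle TP_{1,M}f,P_{2,M}g\rangle$ and invokes \eqref{generaldec}). However, the coefficient identity on which you hang the argument, $\langle P_{1,M}^{\perp}f,\tilde\psi_{I}^{b_{1}}\rangle=\chi_{\mathcal D_{M}^{c}}(I)\langle f,\tilde\psi_{I}^{b_{1}}\rangle$, is false in general, and the cancellation you invoke does not occur where you place it. By \eqref{psiortho2},
$$
\langle P_{1,M}f,\tilde\psi_{I}^{b_{1}}\rangle=\chi_{\mathcal D_{M}}(I)\langle f,\tilde\psi_{I}^{b_{1}}\rangle-\frac{|I|\langle b_{1}\rangle_{I}}{|I_{p}|\langle b_{1}\rangle_{I_{p}}}\sum_{K\in\child(I_{p})\cap\mathcal D_{M}}\langle f,\tilde\psi_{K}^{b_{1}}\rangle ,
$$
and \eqref{2^d-1} kills the last sum only when $\child(I_{p})\cap\mathcal D_{M}$ is all of $\child(I_{p})$ or empty. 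But $\mathcal D_{M}$ is \emph{not} closed under taking siblings: the side-length constraint is, but the constraint $\rdist(I,\mathbb B_{2^{M}})\leq M$ is not, since siblings sit at different positions. For instance, with $n=1$, $M=1$, the cube $[0,1)$ satisfies $\rdist([0,1),\mathbb B_{2})=1$ while its sibling $[1,2)$ has $\rdist([1,2),\mathbb B_{2})=3/2$, so $\child([0,2))\cap\mathcal D_{1}$ is a proper nonempty subset of $\child([0,2))$ and the truncated sibling sum does not vanish.

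The corollary is nonetheless true, but the cancellation happens one step later and for a different reason: the residual term above is a multiple of $|I|^{1/2}\langle b_{1}\rangle_{I}$, and once you insert it into $\sum_{I,J}\langle P_{1,M}^{\perp}f,\tilde\psi_{I}^{b_{1}}\rangle\langle P_{2,M}^{\perp}g,\tilde\psi_{J}^{b_{2}}\rangle\langle T\psi_{I}^{b_{1}},\psi_{J}^{b_{2}}\rangle$ and sum over $I\in\child(I_{p})$, it pairs $T$ against $\sum_{I\in\child(I_{p})}|I|^{1/2}\langle b_{1}\rangle_{I}\psi_{I}^{b_{1}}=0$, which is exactly the linear dependence \eqref{noLI} of the \emph{primal} wavelets, not the dependence \eqref{2^d-1} of the dual ones that you cite. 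So the fix is: do not claim the coefficient identity; instead carry the residual terms and observe they contribute zero to the bilinear form by \eqref{noLI}. Your fallback route (pointwise tail expansion from Lemma \ref{densityinL2} plus rerunning the truncation scheme of Lemma \ref{orthorep}) is sound but amounts to reproving the lemma rather than deducing the corollary from it; the four-term expansion is the cheaper path.
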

\proof We have that 
\begin{align*}
\langle &(P_{2,M}^{*})^{\perp}TP_{1,M}^{\perp}f,g\rangle 
=\langle TP_{1,M}^{\perp}f,P_{2,M}^{\perp}g\rangle
\\
&=\langle Tf,g\rangle 
-\langle Tf,P_{2,M}g\rangle 
-\langle TP_{1,M}f,g\rangle 
+\langle TP_{1,M}f,P_{2,M}g\rangle 
\end{align*}
and by \eqref{generaldec} the last expression coincides with the statement.

\section{$L^{p}$ compactness}\label{L2}
We start this section with the following technical result:
\begin{lemma}\label{smallF} Let $\tilde{L}, S, \tilde{D}$ be the functions of Definition \ref{LSDF} and let 
$
F(I, J) \! = \!
\tilde F_{K}(I\smland J, I\smland J,  \langle I,J\rangle )
+F_{W}(I;M_{T,\epsilon }) \chi_{I=J}$.
Given $\epsilon >0$, let $M>0$ be large enough so that
$\tilde{L}(2^{M})+S(2^{-M})+\tilde{D}(M^{\frac{1}{8}})<\epsilon $. 

Then
for all 
$I\in {\mathcal D}_{2M}^{c}$ and $J\in {\mathcal D}_{M}^{c}$ 
we have that: either $F(I,J)<\epsilon $, $|\log(\ec(I,J))|\gtrsim \log M$, or $\rdist(I, J)\gtrsim M^{\frac{1}{8}}$. 
\end{lemma}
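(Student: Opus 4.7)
The plan is to argue the contrapositive: assume $F(I,J) \geq \epsilon$, $|\log \ec(I,J)| \leq \sigma \log M$, and $\rdist(I,J) \leq \sigma' M^{1/8}$ for sufficiently small absolute constants $\sigma, \sigma'$, and derive a contradiction with $I \in \mathcal{D}_{2M}^{c}$. The key point is that the exponent $1/8$ is small enough that every polynomial factor in $M$ that gets accumulated remains negligible against the exponential gap between $\ell(I) \lesssim 2^{M}$ and $\ell(\mathbb{B}_{2^{2M}}) = 2^{2M}$.

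Since $F_W$ is of the type in Definition \ref{LSDF} and hence tends to zero on $\mathcal{D}_{2M}^{c}$ for $M$ large, I may (after enlarging $M$) ignore the $F_W(I;M_{T,\epsilon})\chi_{I=J}$ contribution and assume
\begin{equation*}
\tilde{L}(I \smland J)\, S(I \smland J)\, \tilde{D}(\langle I,J\rangle) \geq \epsilon/2.
\end{equation*}
All three factors are uniformly bounded, so each must exceed a fixed multiple of $\epsilon$. Combining this with the monotonicity of $\tilde{L},S,\tilde{D}$ (cf.\ Section \ref{interlude}) and the hypothesis $\tilde{L}(2^M) + S(2^{-M}) + \tilde{D}(M^{1/8}) < \epsilon$, I conclude
\begin{equation*}
2^{-M} < \ell(I \smland J) < 2^{M}, \qquad \rdist(\langle I,J\rangle, \mathbb{B}) < M^{1/8}.
\end{equation*}

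The two negated conclusions now control the geometry of $I$. From $\ec(I,J) \geq M^{-\sigma}$ I get $\ell(I \smlor J) \leq M^{\sigma} 2^{M}$, and from $\rdist(I,J) \leq \sigma' M^{1/8}$ I get $\ell(\langle I,J\rangle) = \rdist(I,J)\cdot \ell(I \smlor J) \lesssim M^{\sigma+1/8} 2^{M}$. Combining with $\rdist(\langle I,J\rangle, \mathbb{B}) < M^{1/8}$ and the equivalence $\rdist \approx 1 + |c(\cdot)|_\infty/\ell(\cdot \smlor \mathbb{B})$ yields $|c(\langle I,J\rangle)|_\infty \lesssim M^{\sigma+1/4} 2^{M}$. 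Since $I \subset \langle I,J\rangle$, the same polynomial-times-exponential bound holds for $|c(I)|_\infty$, while $\ell(I) \leq \ell(I \smlor J) \leq M^{\sigma} 2^{M} < 2^{2M}$ for $M$ sufficiently large.

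Finally, since $\ell(I) > 2^{-M} > 2^{-2M}$ and $\ell(I) < 2^{2M}$, the size of $I$ fits inside the window defining $\mathcal{D}_{2M}$; and since $\ell(I \smlor \mathbb{B}_{2^{2M}}) = 2^{2M}$,
\begin{equation*}
\rdist(I, \mathbb{B}_{2^{2M}}) \approx 1 + \frac{|c(I)|_\infty}{2^{2M}} \lesssim 1 + \frac{M^{\sigma+1/4}}{2^{M}} \leq 2M
\end{equation*}
for $M$ large. Hence $I \in \mathcal{D}_{2M}$, contradicting $I \in \mathcal{D}_{2M}^{c}$. The main obstacle is just the bookkeeping of exponents: one must verify that the polynomial factors $M^{\sigma}, M^{1/8}, M^{1/4}$ picked up through $\ec$, $\rdist(I,J)$ and $\tilde{D}$ all remain dominated by the exponential decay $2^{M}/2^{2M} = 2^{-M}$. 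The choice of $M^{1/8}$ in the statement is calibrated precisely to leave this slack; a secondary minor point is the reduction of the $F_W$ contribution, handled by absorbing it into the choice of $M$.
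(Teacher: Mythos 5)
Your proposal is correct, and it reorganizes the paper's argument as a contrapositive rather than reproducing its forward case tree. The paper splits directly according to which defining condition of $\mathcal D_{2M}$ fails for $I$ (too small, too large, or too far from $\mathbb B_{2^{2M}}$) and then sub-splits on the geometry of $J$, choosing exponents $\alpha=\tfrac18$, $\beta=\gamma=\tfrac14$ so that each leaf of the tree lands in one of the three alternatives. You instead negate all three alternatives at once and run a single linear chain of inequalities, $\ell(I\smlor J)\lesssim M^{\sigma}2^{M}$, $\ell(\langle I,J\rangle)\lesssim M^{\sigma+1/8}2^{M}$, $|c(I)|_{\infty}\lesssim M^{\sigma+1/4}2^{M}$, forcing $I\in\mathcal D_{2M}$. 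The exponent bookkeeping is the same in both versions (polynomial factors in $M$ against the gap $2^{M}/2^{2M}$), but your organization is cleaner and makes transparent why $\tfrac18$ suffices.

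Two loose ends, both at the level of looseness the paper itself tolerates but worth making explicit. First, from $\tilde L(I\smland J)S(I\smland J)\tilde D(\langle I,J\rangle)\geq\epsilon/2$ you conclude each factor exceeds a fixed multiple $c\epsilon$ with $c<1$, whereas the hypothesis only gives $\tilde L(2^{M})<\epsilon$, not $<c\epsilon$; so your window $2^{-M}<\ell(I\smland J)<2^{M}$, $\rdist(\langle I,J\rangle,\mathbb B)<M^{1/8}$ really holds only up to adjusting the threshold to $\epsilon/C$ in the hypothesis, or equivalently reading the first alternative as $F(I,J)\lesssim\epsilon$ — exactly the implicit-constant convention the paper's own proof uses. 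Second, dismissing $F_{W}(I;M_{T,\epsilon})\chi_{I=J}$ is not quite automatic: when $I=J$ one still has to check the three cases of $I\in\mathcal D_{M}^{c}$ (small side length, large side length, large $\rdist(I,\mathbb B_{2^{M}})$) to see that $F_{W}(I;M_{T,\epsilon})<\epsilon$, and the required largeness of $M$ involves the component functions of $F_{W}$, which are not the ones appearing in the displayed hypothesis on $M$; your phrase ``after enlarging $M$'' absorbs this correctly but the paper carries it out explicitly. Neither point affects the validity of the argument or its downstream use.
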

\begin{remark} As showed in the proof, 
$F(I,J)<\epsilon $ holds when either
$\ell(I\smland J)>2^{M}$, or
$\ell(I\smland J)<2^{-M}$, or $\rdist(\langle I,J\rangle ,\mathbb B)>M^{1/8}$. For this reason, in Definition \ref{accretive}
we denote by $\mathcal F_{M}$ the family of
ordered pairs $(I,J)$ with $I,J\in \mathcal D_{M}^{c}$ satisfying some of these three inequalities.
\end{remark}
\begin{proof} We start with $F_{W}(I;M_{T,\epsilon }) \chi_{I=J}$, for which the proof is simpler.  Since  
$I\smlor J=I\smland J=\langle I,J\rangle =I=J\in \mathcal D_{M}^{c}$, we study three cases:
\vskip5pt
\noindent
a) When $\ell(I)<2^{-M}$, we have 
$F_{W}(I;M_{T,\epsilon })\lesssim S(\ell(I))\leq S(2^{-M})<\epsilon$.
\vskip5pt
\noindent
b) When $\ell(I)>2^{M}$, we get
$F_{W}(I;M_{T,\epsilon })\lesssim \tilde{L}(\ell(I))\leq \tilde{L}(2^{M})<\epsilon$.
\vskip5pt
\noindent
c) When $2^{-M}\leq \ell(I)\leq 2^{M}$ with $\rdist(I,\mathbb B_{2^{M}})>2M$, we finally get
$F_{W}(I;M_{T,\epsilon })\lesssim \tilde{D}(\rdist(I,\mathbb B_{2^{M}}))\leq \tilde{D}(2^{M})<\epsilon$.

\vskip8pt
We continue with $\tilde F_{K}$.
Since $I\in {\mathcal D}_{2M}^{c}$, we consider three cases: 
\vskip5pt
\noindent
a) When $\ell(I)<2^{-2M}$, we have $\ell(I\smland J)<2^{-2M}$ and so, we get\\
$\tilde F_{K}(I\smland J, I\smland J,  \langle I,J\rangle )\lesssim S(\ell(I\smland J))\leq S(2^{-2M})<\epsilon$.
\vskip5pt
\noindent
b) When $\ell(I)>2^{2M}$, since $J\in {\mathcal D}_{M}^{c}$ we distinguish two cases:
\begin{itemize}
\item[b.1)] When $\ell(J)>2^{M}$, we get $\ell(I\smland J)>2^{M}$ and so, we obtain 
$\tilde F_{K}(I\smland J, I\smland J,  \langle I,J\rangle )\lesssim \tilde L(\ell(I\smland J))<\tilde L(2^{M})<\epsilon$.
\item[b.2)] When 
$\ell(J)\leq 2^{M}$,
we have that 
$$
\ec(I,J)=\frac{\ell(I\smland J)}{\ell(I\smlor J)}=\frac{\ell(J)}{\ell(I)}<\frac{2^{M}}{2^{2M}}=2^{-M}.
$$
\end{itemize}
\vskip5pt
\noindent
c) When $2^{-2M}\leq \ell(I)\leq 2^{2M}$ with $\rdist(I,\mathbb B_{2^{2M}})>2M$, we 
have $|c(I)|_{\infty }>(2M-1)2^{2M}$. We fix $\alpha =\frac{1}{8}$, $\beta =\gamma =\frac{1}{4}$. Then,
\begin{itemize}
\item[c.1)] When $\ell(J)>(2M)^{\alpha}2^{2M}$, since $\alpha >0$ we have 
$$\ec(I,J)=\frac{\ell(I)}{\ell(J)}<\frac{2^{2M}}{(2M)^{\alpha }2^{2M}}\lesssim 
M^{-\frac{1}{8}}.$$ 

\item[c.2)] When $\ell(J)\leq (2M)^{\alpha }2^{2M}$, we have $\ell(I\smlor J)<(2M)^{\alpha }2^{2M}$. Now:
\vskip5pt
\noindent c.2.1) When $\rdist(\langle I,J\rangle,\mathbb B 
)>(2M)^{\beta }$, we obtain 
\begin{align*}
\hskip30pt \tilde F_{K}&(I\smland J, I\smland J,  \langle I,J\rangle )\lesssim \tilde D(\rdist(\langle I,J\rangle,\mathbb B))
<
\tilde D(M^{\frac{1}{8}})<\epsilon.
\end{align*}

\noindent c.2.2) When $\rdist(\langle I,J\rangle,\mathbb B_{2^{(2M)^{\beta }}})\leq (2M)^{\beta }$, we get 
$|c(\langle I,J\rangle)|_{\infty }\leq (2M)^{\beta }(
1+\ell(\langle I,J\rangle))$. Then, we examine the last two cases:
\begin{itemize}
\item When $\ell(\langle I,J\rangle)>(2M)^{\gamma}2^{2M}$,  
we get
$$
\hspace{.5cm}\rdist(I,J)=\frac{\ell(\langle I,J\rangle)}{\ell(I\smlor J)}>\frac{(2M)^{\gamma}2^{2M}}{(2M)^{\alpha }2^{2M}}
\gtrsim M^{\gamma -\alpha }=M^{\frac{1}{8}}.
$$

\item When $\ell(\langle I,J\rangle)\leq (2M)^{\gamma }2^{2M}$, we have instead
\begin{align*}
\hspace{1.5cm}
&|c(I)-c(J)|_{\infty }>|c(I)|_{\infty }-|c(\langle I,J\rangle)-c(J)|_{\infty }-|c(\langle I,J\rangle)|_{\infty }
\\
&\geq |c(I)|_{\infty }-2^{-1}\ell(\langle I,J\rangle)-(2M)^{\beta }(
1+\ell(\langle I,J\rangle))
\\
&\geq (2M-1)2^{2M}-(2M)^{\gamma }2^{2M}-(2M)^{\beta }(
1+(2M)^{\gamma }2^{2M})
\\
&\gtrsim (M\!-\!M^{\gamma}\!-\!M^{\beta}\!-\!M^{\beta +\gamma })2^{2M}
\gtrsim (M\!-\!3M^{\frac{1}{2}})2^{2M}\geq 2^{-1}M2^{2M}
\end{align*}
\hspace{-.5cm} for $M\geq 36$. Whence, 
\begin{align*}
\hspace{1.5cm}
\rdist(I,J)&\geq \frac{|c(I)-c(J)|_{\infty }}{\ell(I\smlor J)}
\gtrsim  \frac{M2^{2M}}{(2M)^{\alpha }2^{2M}}\gtrsim M^{1-\alpha }
=M^{\frac{7}{8}}.
\end{align*}
\end{itemize}

\end{itemize}
\end{proof}

We now demonstrate our main result on compactness of singular integral operators
when the special cancellation conditions hold.

\begin{theorem}
\label{L2bounds}
Let
$T$ 
be a linear operator bounded on $L^{2}(\mathbb R^{n})$ 
with a compact Calder\'on-Zygmund kernel.
Let $b_{1}$, $b_{2}$ be locally integrable functions compatible with $T$. We assume that $T$ satisfies 
the weak compactness condition
and 
$Tb_{1}=T^{*}b_{2}=0$.

Then $T$ can be extended to a compact operator on $L^2(\mathbb R^{n})$.
\end{theorem}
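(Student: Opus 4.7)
The plan is to use the compactness reduction explained just after Definition \ref{lagom}: compactness of $T$ on $L^{2}(\mathbb R^{n})$ follows once we show that $|\langle (P_{2,M}^{*})^{\perp}TP_{1,M}^{\perp}f,g\rangle|$ tends to $0$ as $M\to\infty$, uniformly for $f,g$ in the unit ball of $L^{2}(\mathbb R^{n})$. The $L^{p}$ extension will then be obtained by interpolation and duality, noting that the weak compactness condition and the vanishing of $Tb_{1}$, $T^{*}b_{2}$ are preserved under these manipulations. I use the wavelet systems $(\psi_{I}^{b_{1}})_{I}$ and $(\psi_{J}^{b_{2}})_{J}$ of Definition \ref{defpsi}, and by Corollary \ref{repofproj} expand
\[
\langle (P_{2,M}^{*})^{\perp}TP_{1,M}^{\perp}f,g\rangle
=\!\!\sum_{I,J\in \mathcal D_{M}^{c}}\!\!
\langle f,\tilde{\psi}_{I}^{b_{1}}\rangle
\langle g,\tilde{\psi}_{J}^{b_{2}}\rangle
\langle T\psi_{I}^{b_{1}}, \psi_{J}^{b_{2}}\rangle .
\]

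The next step is to bound each matrix entry $\langle T\psi_{I}^{b_{1}},\psi_{J}^{b_{2}}\rangle$ via Proposition \ref{twobumplemma1}. Because $Tb_{1}=T^{*}b_{2}=0$, its hypotheses are met and the three cases give estimates of the form $|\langle T\psi_{I}^{b_{1}},\psi_{J}^{b_{2}}\rangle|\lesssim G(I,J)\,B\!F(I,J)$, where $B\!F$ is the dominant quantity from Definition \ref{accretive} (as pointed out in Remark \ref{defBF}) and $G(I,J)$ is a purely geometric decay factor: $\ec(I,J)^{n/2+\delta}\rdist(I,J)^{-(n+\delta)}$ in case 1), $\ec(I,J)^{n/2}\inrdist(I,J)^{-\delta}$ in case 2), and at least $\ec(I,J)^{n/2}$ in case 3). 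I then partition $\mathcal D_{M}^{c}\times\mathcal D_{M}^{c}$ into the subset $\mathcal F_{M}$ (on which \eqref{b-averageandbound0limit} forces $B\!F(I,J)<\epsilon$) and its complement. By Lemma \ref{smallF}, pairs in the complement must either have $|\log\ec(I,J)|\gtrsim \log M$ or $\rdist(I,J)\gtrsim M^{1/8}$, so in each of those pairs the geometric factor $G(I,J)$ is itself $o(1)$ as $M\to\infty$.

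On each of the two pieces, the strategy is a Schur-type Cauchy–Schwarz split: writing $G(I,J)=G_{1}(I,J)^{1/2}G_{2}(I,J)^{1/2}$ and $B\!F=B\!F^{1/2}B\!F^{1/2}$, I factor the double sum as
\[
\Bigl(\sum_{I,J}G_{1}(I,J)B\!F(I,J)|\langle f,\tilde{\psi}_{I}^{b_{1}}\rangle|^{2}\Bigr)^{1/2}\Bigl(\sum_{I,J}G_{2}(I,J)B\!F(I,J)|\langle g,\tilde{\psi}_{J}^{b_{2}}\rangle|^{2}\Bigr)^{1/2}.
\]
Standard dyadic summation in $J$ (respectively $I$) with fixed $I$ (respectively $J$) absorbs the geometric decay $G_{i}$, using the integrability of $\ec^{\delta/2}/\rdist^{n+\delta/2}$ and of $\inrdist^{-\delta}$ times the combinatorial count of adjacent cubes; what remains in each factor is $\sup_{J}B\!F(I,J)$ or $\sup_{I}B\!F(I,J)$ times $\sum_{I}|\langle f,\tilde{\psi}_{I}^{b_{1}}\rangle|^{2}$ weighted by $B\!F$, which is controlled by Lemma \ref{Planche}. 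On the piece with indices in $\mathcal F_{M}$ the weighted Plancherel estimate from Lemma \ref{Planche} directly yields the bound $\epsilon\|f\|_{2}\|g\|_{2}$; on the complementary piece the uniform smallness of $G(I,J)$ together with the first (global) estimate of Lemma \ref{Planche} gives the same $o(1)\|f\|_{2}\|g\|_{2}$ bound.

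The main technical obstacle is organizing the Schur sums cleanly across the three regimes of Proposition \ref{twobumplemma1}, since the three geometric factors behave quite differently: in case 1) the eccentricity decay must be balanced against the polynomial count of cubes of small side length clustered around $I$, in case 2) the $\inrdist^{-\delta}$ factor replaces $\rdist^{-(n+\delta)}$ and one must exploit the proximity relation $\ec(I,J)\lesssim \inrdist(I_{p},J_{p})^{-1}$ shown in part b) of the proof of Proposition \ref{twobumplemma1}, and in case 3) only $\ec(I,J)^{n/2}$ survives so the summability comes entirely from the fixed finite number of adjacent cubes. Once these three Schur-style sums are carried out, the $L^{2}$ compactness follows, and the extension to $L^{p}$ for $1<p<\infty$ is obtained by an interpolation/duality argument applied to the finite-rank truncations $P_{2,M}^{*}T$ and $TP_{1,M}^{*}$ together with standard Calder\'on--Zygmund $L^{p}$ bounds implied by Corollary \ref{Mainresultrestrictedbddness1}.
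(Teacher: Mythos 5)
Your proposal follows essentially the same route as the paper's proof: compactness reduction via $(P_{2,M}^*)^\perp T P_{1,M}^\perp$, wavelet expansion from Corollary \ref{repofproj}, bump-function bounds from Proposition \ref{twobumplemma1}, the dichotomy $\mathcal F_M$ vs.\ its complement from Lemma \ref{smallF}, and the weighted Plancherel estimates of Lemma \ref{Planche} fed into a Cauchy--Schwarz/Schur summation organized by eccentricity, relative distance and inner relative distance. One small note: on the complement of $\mathcal F_M$ the gain is not pointwise smallness of the geometric factor $G(I,J)$ but the fact that the eccentricity/relative-distance parameters are forced to start from $M$-dependent thresholds so only a tail of a convergent series survives, and the aside about deriving $L^p$ bounds from Corollary \ref{Mainresultrestrictedbddness1} is both unnecessary (the theorem is stated for $L^2$) and circular (that corollary rests on this theorem); the paper instead invokes classical Calder\'on--Zygmund $L^p$ boundedness and Krasnoselskii interpolation.
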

\proof
Let $(\psi_{I}^{b_{i}})_{I\in {\mathcal D}}$ 
be the Haar wavelet systems of Definition \ref{defpsi}  and $P_{i,M}$ be the projections associated with each system. 
By the comments after Remark \ref{actionproduct}, to prove compactness of $T$ on $L^{2}(\mathbb R^{n})$
it is enough to show that 
$
\langle (P_{2,M}^{*})^{\perp}TP_{1,M}^{\perp}f,g\rangle 
$
tends to zero uniformly for $f,g$ 
in the unit ball of 
$L^{2}(\mathbb R^{n})$. 
From now we write both projections as $P_{M}^{\perp}, (P_{M}^{*})^{\perp}$.

By Lemma \ref{smallF}, given $\epsilon >0$, there exists $M_{0}\in \mathbb N$ so that 
$F(I,J)<\epsilon $, $|\log(\ec(I,J))|\gtrsim \log M$, or $\rdist(I, J)\gtrsim M^{\frac{1}{8}}$. 

Let now $B\!F:\mathcal D\times \mathcal D\rightarrow [0,\infty ]$ be as given in Definition \ref{accretive}.
By Lemma \ref{Planche}, there exists $M_{1}\in \mathbb N$, $M_{1}>M_{0}$ so that $M_{1}>3^{8}$ and 
\begin{equation}\label{useofPlanche0}
\Big(\sum_{I\in {\mathcal D}}
B\!F(I,J)
|\langle f,\tilde{\psi}_{I}^{b_{1}}\rangle |^{2}
\Big)^{1/2}
\lesssim \| f\|_{L^{2}(\mathbb R^{n})}
\end{equation}
for all $J\in {\mathcal D}$ and 
\begin{equation}\label{useofPlanche}
\Big(\sum_{I\in {\mathcal D}_{M}^{c}}
\hspace{-.2cm}\sup_{\tiny \begin{array}{c}J\in  \mathcal D_{M}^{c}\\
(I,J)\in \mathcal F_{M}
\end{array}}\hspace{-.5cm}B\!F(I,J)
|\langle f,\tilde{\psi}_{I}^{b_{1}}\rangle |^{2}
\Big)^{1/2}
\lesssim \epsilon \| f\|_{L^{2}(\mathbb R^{n})}
\end{equation}
for all $M>M_{1}$. Similarly 
for $b_{2}$ and $g$. 

Now, for fixed $\epsilon >0$ and the chosen $M_{1}\in \mathbb N$, we prove that for  
$M>M_{1}$ such that $2M^{-\frac{\delta }{8}}+M^{-\frac{\delta }{1+2\delta}}<\epsilon $, 
we have 
$$
|\langle (P_{M}^{*})^{\perp}TP_{2M}^{\perp }f,g\rangle |\lesssim \epsilon .
$$

By Corollary \ref{repofproj},  
\begin{align}\label{orthopro}
\langle (P_{M}^{*})^{\perp}TP_{2M}^{\perp }f,g\rangle =
\sum_{I\in {\cal D}_{2M}^{c}}\sum_{J\in {\cal D}_{M}^{c}}
\langle f,\tilde{\psi}_{I}^{b_{1}}\rangle 
\langle g, \tilde{\psi}_{J}^{b_{2}}\rangle
\langle T\psi_I^{b_{1}},\psi_J^{b_{2}}\rangle .
\end{align}

According to Proposition \ref{twobumplemma1},  
we parametrize the sums by eccentricity, relative distance and inner relative distance of the cubes $I,J$ as follows.
For fixed $e\in \mathbb Z$, $m\in \mathbb N$ and $J\in \mathcal D$,
we define the family
$$
J_{e,m}=\{ I\in {\mathcal D}:\ell(I)=2^{e}\ell(J), m\leq  \rdist(I,J)< m+1 \} .
$$
When $m\leq 3$, we define for every $1\leq k\leq 2^{-\min(e,0)}$, 
$$
J_{e,m, k}=J_{e,m}\cap \{ I\in {\mathcal D}:k\leq \inrdist(I,J)< k+1 \} .
$$

The cardinality of 
$J_{e,m}$ is comparable to $2^{-\min(e,0)n}n(2m)^{n-1}$. On the other hand, when $m\leq 3$, the cardinality of 
$J_{e,m,k}$ is comparable to $n(2^{-\min(e,0)}-k)^{n-1}$.
Moreover, by symmetry, the family $\{ (I,J): I\in J_{e,m}\}$ can be  parametrized as 
$\{ (I,J): J\in I_{-e,m}\}$ and, similarly,  $I\in J_{e,m,k}$ if and only if $J\in I_{-e,m,k}$.
With all this, \eqref{orthopro} equals
\begin{align*}
\sum_{e\in \mathbb Z}\sum_{m, k\in \mathbb N}
\sum_{\tiny \begin{array}{c}J{\in \cal D}_{M}^{c}\end{array}}
\sum_{I\in J_{e,m,k}\cap {\cal D}_{2M}^{c}}
\langle f,\tilde{\psi}_{I}^{b_{1}}\rangle 
\langle g, \tilde{\psi}_{J}^{b_{2}}\rangle
\langle T\psi_I^{b_{1}},\psi_J^{b_{2}}\rangle , 
\end{align*}
Notice the abuse of notation since the written sums actually mean:
\begin{align*}
\sum_{e\in \mathbb Z}
\Big( \sum_{m\geq 4}
\sum_{\tiny \begin{array}{c}J{\in \cal D}_{M}^{c}\end{array}}\hspace{-.2cm}
\sum_{I\in J_{e,m}\cap {\cal D}_{2M}^{c}}
\hspace{-.1cm}+\hspace{-.1cm}\sum_{1\leq m\leq 3}\sum_{1\leq k\leq 2^{-\min(e,0)}}
\hspace{-.1cm}\sum_{\tiny \begin{array}{c}J{\in \cal D}_{M}^{c}\end{array}}
\hspace{-.1cm}
\sum_{I\in J_{e,m,k}\cap {\cal D}_{2M}^{c}}\Big) .
\end{align*}

Since 
$\psi_{I}^{b_{1}}=h_{I}^{b_{1}}b_{1}$, $\psi_{J}^{b_{2}}=h_{J}^{b_{2}}b_{2}$, we have
$
\langle T\psi_I^{b_{1}},\psi_J^{b_{2}}\rangle =
\langle T_{b} h_I^{b_{1}},h_J^{b_{2}}\rangle 
$.
Then by Proposition \ref{twobumplemma1} 
we get the following inequalities: for $m>3$,
$
|\langle T\psi_I^{b_{1}},\psi_J^{b_{2}}\rangle |
\lesssim 2^{-|e|(\frac{n}{2}+\delta )}m^{-(n+\delta )}
B\!F(I,J),
$
while for $m\leq 3$ and $k\geq1$,
$
|\langle T\psi_I^{b_{1}},\psi_J^{b_{2}}\rangle |
\lesssim 2^{-|e|\frac{n}{2}}k^{-\delta }
B\!F(I,J).
$
Writing both bounds in a unified manner as
$G_{e,m,k}
B\!F(I,J),$
%
%
we can estimate $|\langle (P_{2}^{*})^{\perp}TP_{2M}^{\perp }f,g\rangle |$ by
\begin{align}\label{moduloin2}
&\sum_{e\in \mathbb Z}\sum_{m,k\in \mathbb N}G_{e,m,k}
\sum_{\tiny \begin{array}{c}J\in {\cal D}_{M}^{c}\end{array}}
\sum_{I\in J_{e,m,k}\cap {\cal D}_{2M}^{c}}
|\langle f,\tilde{\psi}_{I}^{b_{1}}\rangle | 
|\langle g,\tilde{\psi}_{J}^{b_{2}}\rangle | B\!F(I,J) .
\end{align}

Now, in order to estimate this last quantity, we divide the study into two cases:
1) $(I,J)\in \mathcal F_{M}$ 
and
2) $(I,J)\notin \mathcal F_{M}$

1) $(I,J)\in \mathcal F_{M}$ implies $F(I,J)<\epsilon $ and so, \eqref{useofPlanche} holds. Now, 
we divide this case in two sub-cases depending whether $m>3$ or $m\leq 3$. 

1.1) 
We bound the terms in (\ref{moduloin2}) 
corresponding to the case $m>3$ by
\begin{align*}
\sum_{e\in \mathbb Z}\sum_{m\geq 4}\frac{2^{-|e|(\frac{n}{2}+\delta )}}{m^{n+\delta }}
&\Big( \sum_{I\in {\mathcal D_{M}^{c}}}\hspace{-.1cm} \sum_{\tiny \begin{array}{c}J\! \in \! I_{-e,m}\cap {\mathcal D}_{M}^{c}\\
(I,J)\in \mathcal F_{M}
\end{array}}\hspace{-.1cm}
B\!F(I,J)
|\langle f,\tilde{\psi}_{I}^{b_{1}}\rangle |^2\Big)^{\frac{1}{2}}
\\
&\Big( \sum_{J\in {\mathcal D}_{M}^{c}}\hspace{-.1cm} \sum_{\tiny \begin{array}{c}I\! \in \! J_{e,m}\cap {\mathcal D}_{M}^{c}\\
(I,J)\in \mathcal F_{M}
\end{array}}\hspace{-.1cm}
B\!F(I,J)
|\langle g,\tilde{\psi}_{J}^{b_{2},j}\rangle |^2\Big)^{\frac{1}{2}}.
\end{align*}
The cardinality of
$J_{e,m}$ is comparable to $2^{-\min(e,0)n}m^{n-1}$ and so, 
the cardinality of $I_{-e,m}$ is comparable to 
$2^{\max(e,0)n}m^{n-1}$. Then
we bound the previous expression by a constant comparable to $2^{n}$ multiplied by 

\begin{align*}
&\sum_{e\in \mathbb Z}\sum_{m\geq 4}
\frac{2^{-|e|(\frac{n}{2}+\delta )}}{m^{n+\delta }}
\Big( 2^{\max(e,0)n}m^{n-1}\hspace{-.2cm}\sum_{I\in {\mathcal D}_{M}^{c}}
\sup_{\tiny \begin{array}{c}J\in  \mathcal D_{M}^{c}\\
 (I,J)\in \mathcal F_{M}
 \end{array}}\hspace{-.5cm}B\!F(I,J)
|\langle f,\tilde{\psi}_{I}^{b_{1}}\rangle |^2\Big)^{\frac{1}{2}}
\\
&\hskip90pt \Big( 2^{-\min(e,0)n}m^{n-1}\hspace{-.2cm}\sum_{J\in {\mathcal D}_{M}^{c}}
\hspace{-.3cm}
\sup_{\tiny \begin{array}{c}I\in  \mathcal D_{M}^{c}\\
(I,J)\in \mathcal F_{M}
\end{array}}\hspace{-.5cm}B\!F(I,J)
|\langle g,\tilde{\psi}_{J}^{b_{2}}\rangle |^2\Big)^{\frac{1}{2}}
\\
&\lesssim \epsilon \Big( \sum_{e\in \mathbb Z}2^{-|e|\delta }\sum_{m\geq 4}m^{-(1+\delta )}\Big)\| f\|_{L^{2}(\mathbb R^{n})}\| g\|_{L^{2}(\mathbb R^{n})}
\lesssim \epsilon 
\end{align*}
where in the first inequality we used \eqref{useofPlanche} and $2^{\max(e,0)}2^{-\min(e,0)}=2^{|e|}$.


1.2) When $m\leq 3$, we assume $\ell(J)\leq \ell(I)$ to simplify notation. 
This choice implies that $e\geq 0$. 
Then, 
as before, we bound the terms in (\ref{moduloin2}) 
corresponding to this case by a constant times the following quantity
\begin{align}\label{firstpart}
\nonumber
\hspace{-.2cm}\sum_{e\geq 0}2^{-e\frac{n}{2}}
&\Big( \sum_{I\in {\mathcal D}_{M}^{c}}
\sup_{\tiny \begin{array}{c}J\in  \mathcal D_{M}^{c}\\
(I,J)\in \mathcal F_{M}
\end{array}}\hspace{-.5cm}B\!F(I,J)
|\langle f,\tilde{\psi}_{I}^{b_{1}}\rangle |^2
\sum_{k=1}^{2^{-\min(-e,0)}}
\sum_{\tiny \begin{array}{c}J\! \in \! I_{-e,m,k}\end{array}}\hspace{-.4cm}k^{-2\delta}
\Big)^{\frac{1}{2}}
\\
\nonumber
&\Big( \sum_{J\in {\mathcal D}_{M}^{c}}
\sum_{k=1}^{2^{-\min(e,0)}}
\sum_{\tiny \begin{array}{c}I\! \in \! J_{e,m,k}\end{array}}\hspace{-.3cm}
\sup_{\tiny \begin{array}{c}J\in  \mathcal D_{M}^{c}\\
(I,J)\in \mathcal F_{M}
\end{array}}\hspace{-.3cm}B\!F(I,J)
|\langle g,\tilde{\psi}_{J}^{b_{2}}\rangle |^2
\Big)^{\frac{1}{2}}
\\
&\lesssim \epsilon \sum_{e\geq 0}2^{-e\frac{n}{2}}\| f\|_{L^{2}(\mathbb R^{n})}
\Big(2^{e(n-1)}\sum_{k=1}^{2^{e}}k^{-2\delta}
\Big)^{\frac{1}{2}}\| g\|_{L^{2}(\mathbb R^{n})},
\end{align}
using \eqref{useofPlanche} and the facts that the cardinality of 
$I_{-e,m,k}$ is comparable to $n(2^{e}-k)^{(n-1)}$
while the cardinality of $J_{e,m,k}$ is comparable to $n$.

Let $0<\theta <1$ to be chosen later. Using $k\geq 1$ and $\delta >0$, 
%
we have
\begin{align*}
\sum_{k=1}^{2^{e}}
k^{-2\delta}=
\sum_{k=1}^{2^{\theta e}}
k^{-2\delta}
+\sum_{k=2^{\theta e}+1}^{2^{e}}
k^{-2\delta}
&\lesssim  2^{\theta e}
+2^{-2\delta \theta e }2^{e}.
\end{align*}

Then,  expression \eqref{firstpart} is bounded by a constant multiplied by 
\begin{align*}
&\epsilon \sum_{e\geq 0}
2^{-e\frac{n}{2}}
\big(2^{e\frac{n-1+\theta }{2}}+2^{e\frac{n-\theta 2\delta }{2}}\big)
\lesssim \epsilon \Big(\sum_{e\geq 0}2^{-e\frac{1-\theta}{2}}
+\sum_{e\geq 0}2^{-e\theta \delta }\Big)
\lesssim \epsilon ,
\end{align*}
since $0<\theta <1$. This finishes the first case.

2) We now study the case when $I\in \mathcal D_{2M}^{c}$, $J\in \mathcal D_{M}^{c}$ are such that $F(I,J)\geq \epsilon$. By Lemma \ref{smallF}, we have that 
 $|\log(\ec(I,J))|\gtrsim \log M$, or $\rdist(I, J)\gtrsim M^{\frac{1}{8}}$. 
Therefore, instead the smallness of $B\! F$, in this case we use that 
the size and location of the cubes $I$ and $J$ are such that either their eccentricity or their relative distance are extreme. 
We fix $e_{M}\in \{0, \log M\}$, $m_{M}\in \{M^{\frac{1}{8}},1\}$ such that $e_{M}=0$ implies $m_{M}=M^{\frac{1}{8}}$.
When $m> 3$, using \eqref{useofPlanche0} and the calculations developed in the case 1.1), we bound the relevant part of  (\ref{moduloin2}) by a constant times
\begin{align*}
\sum_{|e|\geq e_{M}}&
\sum_{m\geq m_{M}}
\frac{2^{-|e|(\frac{n}{2}+\delta )}}{m^{n+\delta }}
\sum_{J\in {\cal D}_{M}^{c}} \sum_{I\in J_{e,m}\cap {\cal D}_{2M}^{c}}
|\langle f,\tilde{\psi}_{I}^{b_{1}}\rangle |
|\langle g,\tilde{\psi}_{J}^{b_{2}}\rangle | B\!F(I,J)
\\
&
\lesssim \Big(\sum_{|e|\geq e_{M}}2^{-|e|\delta }\sum_{m\geq m_{M}}
m^{-(1+\delta )}\Big)\| f\|_{L^{2}(\mathbb R^{n})}\| g\|_{L^{2}(\mathbb R^{n})}
\\
&\lesssim 2^{-e_{M}\delta }m_{M}^{-\delta }
\lesssim (M^{-\delta }+M^{-\frac{\delta }{8}})
<\epsilon , 
\end{align*}
by the choice of $M$.

When $m\leq 3$, we have that $m_{M}\leq m\leq 3<M^{\frac{1}{8}}$ implies $m_{M}=1$ and so, $e_{M}=\log M$.
Then the calculations of case 1.2) show that, using \eqref{useofPlanche0},  the relevant part of  (\ref{moduloin2}) can be bounded by a constant times
\begin{align*}
\sum_{e\geq e_{M}}&\sum_{m=1}^{3}
\sum_{k=1}^{2^{e}}
2^{-e\frac{n}{2}}\hspace{-.2cm}
\sum_{J\in {\cal D}_{M}^{c}} \sum_{I\in J_{e,m,k}\cap {\cal D}_{M}^{c}}
|\langle f,\tilde{\psi}_{I}^{b_{1}}\rangle |
|\langle g,\tilde{\psi}_{J}^{b_{2}}\rangle | B\!F(I,J)
\\
&\lesssim \sum_{e\geq e_{M}}(2^{-e\frac{1-\theta}{2}}+2^{-e\theta \delta})
\| f\|_{L^{2}(\mathbb R^{n})}\| g\|_{L^{2}(\mathbb R^{n})}
\\
&
\lesssim \sum_{e\geq \log M}2^{-e\frac{\min(\theta 2\delta, 1-\theta)}{2}}
\lesssim M^{-\frac{\delta }{1+2\delta}}
\leq \epsilon, 
\end{align*}
by the choice of $M$ and  $\theta =\frac{1}{1+2\delta}\in (0,1)$.
This completely finishes the proof of compactness on $L^{2}(\mathbb R^{n})$. 

\begin{corollary}
With the hypotheses of Theorem \ref{L2bounds} 
plus 
the extra condition $b_{1},b_{2}\in L^{\infty }(\mathbb R^{n})$, we obtain compactness of $T_{b}$ on $L^{2}(\mathbb R^{n})$.
\end{corollary}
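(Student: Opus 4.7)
The plan is to deduce compactness of $T_b$ directly from compactness of $T$, invoking the two-sided ideal property of compact operators in $\mathcal B(L^2(\mathbb R^n))$. Since all hypotheses of Theorem \ref{L2bounds} are assumed to hold, that theorem already guarantees that $T$ extends to a compact operator on $L^2(\mathbb R^n)$; no additional cancellation, weak-compactness, or compatibility work is required.

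Next, I would observe that the extra hypothesis $b_1, b_2 \in L^\infty(\mathbb R^n)$ is precisely what is needed for the pointwise multiplication operators $M_{b_1}\colon f\mapsto b_1 f$ and $M_{b_2}\colon f\mapsto b_2 f$ to be bounded on $L^2(\mathbb R^n)$, with operator norms controlled by $\|b_1\|_\infty$ and $\|b_2\|_\infty$ respectively. The same then holds for the adjoint $M_{b_2}^*$, which acts as multiplication by $\overline{b_2}$.

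The final step is to invoke the standard fact that compact operators on $L^2(\mathbb R^n)$ form a two-sided ideal in $\mathcal B(L^2(\mathbb R^n))$: composing a compact operator with bounded operators on either side yields a compact operator. Applied to
$$
T_b \;=\; M_{b_2}^{*}\circ T\circ M_{b_1},
$$
this immediately gives that $T_b$ extends compactly on $L^2(\mathbb R^n)$.

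The main point to flag is that there is no real obstacle here. The entire analytical content sits in Theorem \ref{L2bounds}; the corollary is a bookkeeping step whose sole role is to record that, once one is willing to impose the qualitative hypothesis $b_1,b_2\in L^\infty$, compactness of $T$ transfers to compactness of the conjugated operator $T_b$ via the ideal property, without any new estimate on kernels, paraproducts, or wavelet expansions.
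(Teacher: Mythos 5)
Your argument is correct for the corollary as literally stated, but it is a genuinely different route from the paper's. The paper does not invoke the ideal property at all: it re-runs the wavelet argument of Theorem \ref{L2bounds} for $T_{b}$ using the dual representation \eqref{generaldec2} (equivalently Corollary \ref{repofproj}), and uses Corollary \ref{Planchedual} to obtain the analogues of \eqref{useofPlanche0} and \eqref{useofPlanche} for the coefficients $\langle f,\psi_{I}^{b_{1}}\rangle$ in place of $\langle f,\tilde{\psi}_{I}^{b_{1}}\rangle$; this is precisely where the hypothesis $b_{1},b_{2}\in L^{\infty}(\mathbb R^{n})$ enters, through $\| fb\|_{L^{2}}\leq \| b\|_{L^{\infty}}\| f\|_{L^{2}}$. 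Your soft argument is shorter and unobjectionable: Theorem \ref{L2bounds} gives compactness of $T$, the multiplications $M_{b_{1}}$, $M_{b_{2}}^{*}$ are bounded, and compact operators form a two-sided ideal, so $T_{b}=M_{b_{2}}^{*}\circ T\circ M_{b_{1}}$ is compact; it even remains quantitative, since the finite-rank-plus-small decomposition of $T$ transfers to $T_{b}$ with a loss of $\| b_{1}\|_{\infty}\| b_{2}\|_{\infty}$. What the paper's longer proof buys is the specific decay $\|(P_{M}^{*})^{\perp}T_{b}P_{2M}^{\perp}\|\to 0$ for the adapted projections, expressed entirely in the dual wavelet frame; this is the form of the statement that is actually reused when the special cancellation $Tb_{1}=T^{*}b_{2}=0$ is removed and one must treat the paraproduct-corrected operator built from ${\bf \Pi}_{T_{b}1}$, which is no longer a literal composition of a compact operator with multiplication operators, so the ideal-property shortcut would not be available there. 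You should flag that your argument proves the corollary but does not substitute for the dual representation machinery in the general case.
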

\begin{proof} To prove compactness of $T_{b}$, we use the dual representation of Corollary \ref{repofproj} (or, equivalently, 
equality \eqref{generaldec2}), 
\begin{align*}\label{orthoproforTb}
\nonumber
\langle (P_{M}^{*})^{\perp}&T_{b}P_{2M}^{\perp }f),g\rangle 
=\sum_{I\in {\cal D}_{2M}^{c}}
\sum_{J\in {\cal D}_{M}^{c}}
\langle f,\psi_{I}^{b_{1}}\rangle 
\langle g, \psi_{J}^{b_{2}}\rangle
\langle T_{b} \tilde{\psi}_I^{b_{1}},\tilde{\psi}_J^{b_{2}}\rangle 
\\
&=\sum_{I\in {\cal D}_{2M}^{c}}
\sum_{J\in {\cal D}_{M}^{c}}
\langle f,\psi_{I}^{b_{1}}\rangle 
\langle g, \psi_{J}^{b_{2}}\rangle \langle b_{1}\rangle_{I}\langle b_{2}\rangle_{J}
\langle T_{b} h_I^{b_{1}},h_J^{b_{2}}\rangle .
\end{align*}
By Lemma \ref{Planchedual}, we have
\begin{align*}
\sum_{I\in \mathcal D}
\Big(\frac{[b_{1}]_{I_{p},q}}{|\langle b_{1}\rangle_{I}||\langle b_{1}\rangle_{I_{p}}|}\Big)^{-2}
|\langle f,\psi_{I}^{b_{1}}\rangle |^{2}
\lesssim \| fb\|_{L^{2}(\mathbb R^{n})}^{2}
\leq \| b\|_{L^{\infty }(\mathbb R^{n})}^{2}\| f\|_{L^{2}(\mathbb R^{n})}^{2}
\end{align*}
and the same for $g$ and $b_{2}$.
This implies the following two inequalities, which are similar to \eqref{useofPlanche0} and  \eqref{useofPlanche}:
\begin{equation*}
\Big(\sum_{I\in {\mathcal D}}
B\!F(I,J)
|\langle f,\psi_{I}^{b_{1}}\rangle |^{2}[b_{1}]_{I}^{2}
\Big)^{1/2}
\lesssim \| f\|_{L^{2}(\mathbb R^{n})},
\end{equation*}
for $J\in {\mathcal D}$; and given $\epsilon >0$, there exists $M_{0}\in \mathbb N$ with
\begin{equation*}
\Big(\sum_{I\in {\mathcal D}_{M}^{c}}
\hspace{-.2cm}\sup_{\tiny \begin{array}{c}J\in  \mathcal D_{M}^{c}\\
(I,J)\in \mathcal F_{M} \end{array}}\hspace{-.5cm}B\!F(I,J)
|\langle f,\psi_{I}^{b_{1}}\rangle |^{2}[b_{1}]_{I}^{2}
\Big)^{1/2}
\lesssim \epsilon \| f\|_{L^{2}(\mathbb R^{n})}
\end{equation*}
for all $M>M_{0}$ and $f\in {\mathcal C}_{0}(\mathbb R^{n})$. We have analog inequalities
for $b_{2}$, $g$.
From here we can proceed as in the proof of Theorem \ref{L2bounds}.
\end{proof}

As in \cite{V}, we deduce compactness on $L^{p}(\mathbb R^{n})$ for all $1<p<\infty $ by interpolation between compactness on $L^{2}(\mathbb R^{n})$ and boundedness $L^{p}(\mathbb R^{n})$. 
We refer to the classical Krasnoselskii's Theorem, whose proof 
in a more general setting can be found in \cite{Kra}.

\begin{theorem}
Let $1\leq p_{1}, r_{1}, p_{2}, r_{2} \leq \infty$ be a set of indices with $r_{1}<\infty$. Let $T$ be a given linear operator which is continuous simultaneously as a mapping from $L^{p_{1}}(\mathbb R^{n})$ to $L^{r_{1}}(\mathbb R^{n})$ and from $L^{p_{2}}(\mathbb R^{n})$ to $L^{r_{2}}(\mathbb R^{n})$. Assume in addition that $T$ is compact as a mapping from $L^{p_{1}}(\mathbb R^{n})$ to $L^{r_{1}}(\mathbb R^{n})$. 
Then $T$ is compact as a mapping from $L^{p}(\mathbb R^{n})$ to $L^{r}(\mathbb R^{n})$, where $1/p=t/p_{1}+(1-t)/p_{2}$, 
$1/r=t/r_{1}+(1-t)/r_{2}$, $0<t<1$. 
\end{theorem}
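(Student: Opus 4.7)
The plan is to combine the Riesz--Thorin interpolation theorem, a truncation of $L^p$-functions that places the large-amplitude part in $L^{p_1}$ uniformly, and a Cantor diagonal extraction using the compactness hypothesis on the $L^{p_1}\to L^{r_1}$ endpoint; the last step then promotes $L^{r_1}$-Cauchyness to $L^r$-Cauchyness via Lyapunov's log-convexity inequality.

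First, Riesz--Thorin applied to the two continuous maps $T\colon L^{p_i}\to L^{r_i}$ yields a bounded extension $T\colon L^p\to L^r$. To prove compactness it suffices to show that any bounded sequence $(f_n)\subset L^p$ has a subsequence $(f_{n_k})$ for which $(Tf_{n_k})$ is Cauchy in $L^r$. Assuming without loss of generality $p_1<p<p_2$, split for each $N\in\mathbb{N}$
\[
f_n=g_n^N+h_n^N,\qquad g_n^N=f_n\chi_{\{|f_n|>N\}},\quad h_n^N=f_n\chi_{\{|f_n|\leq N\}}.
\]
On $\{|f_n|>N\}$ one has $|g_n^N|^{p_1}\leq N^{p_1-p}|f_n|^p$, because $p_1-p<0$; this gives the uniform bound $\|g_n^N\|_{p_1}\leq N^{1-p/p_1}\|f_n\|_p^{p/p_1}$, which tends to $0$ as $N\to\infty$. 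Symmetrically $\|h_n^N\|_{p_2}\leq N^{1-p/p_2}\|f_n\|_p^{p/p_2}$ stays finite for each fixed $N$.

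Next, for each fixed $N$ the sequence $(g_n^N)_n$ is bounded in $L^{p_1}$, so the compactness hypothesis on $T\colon L^{p_1}\to L^{r_1}$ extracts a subsequence along which $(Tg_n^N)$ converges in $L^{r_1}$. A Cantor diagonal argument then selects a single subsequence $(f_{n_k})_k$ such that $(Tg_{n_k}^N)_k$ is Cauchy in $L^{r_1}$ for every $N\in\mathbb{N}$. Writing
\[
Tf_{n_k}-Tf_{n_j}=T(g_{n_k}^N-g_{n_j}^N)+T(h_{n_k}^N-h_{n_j}^N),
\]
I would estimate each piece in $L^r$ through Lyapunov's inequality $\|u\|_r\leq\|u\|_{r_1}^t\|u\|_{r_2}^{1-t}$, combined with the endpoint bounds $\|T u\|_{r_i}\lesssim\|u\|_{p_i}$ and the overall $L^p\to L^r$ bound from Riesz--Thorin. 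A two-step game (first choose $N$ so large that the $L^{p_1}$-side contribution is small, then choose $k,j$ large enough to exploit the diagonal Cauchyness) forces $\|Tf_{n_k}-Tf_{n_j}\|_r$ below any prescribed $\varepsilon>0$.

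The main obstacle sits in this Lyapunov step: it requires each piece $T(g_{n_k}^N-g_{n_j}^N)$ and $T(h_{n_k}^N-h_{n_j}^N)$ to lie simultaneously in $L^{r_1}$ and in $L^{r_2}$, while the pure amplitude truncation places $g_n^N$ only in $L^{p_1}$ (its support has finite measure but $|g_n^N|$ may be unbounded) and $h_n^N$ only in $L^{p_2}$ (its support may be spatially infinite). I would circumvent this by refining the decomposition with a second, spatial cut-off in balls $B_R$, so that the bulk piece $f_n\chi_{\{|f_n|\leq N\}\cap B_R}$ lies in $L^{p_1}\cap L^{p_2}$, while the amplitude tail (controlled in $L^{p_1}$) and the spatial tail (controlled in $L^{p_2}$) are handled separately, at the cost of a Cantor extraction in both parameters $N$ and $R$. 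An alternative route is the abstract Lions--Peetre theorem that real interpolation between a compact and a bounded operator produces a compact operator on the intermediate spaces, applied to the identifications $L^p=(L^{p_1},L^{p_2})_{1-t,p}$ and $L^r=(L^{r_1},L^{r_2})_{1-t,r}$.
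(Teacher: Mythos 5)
The paper does not actually prove this statement; it is quoted from Krasnoselskii \cite{Kra} without proof, so I can only judge your argument on its own terms. Your Riesz--Thorin step and the amplitude-truncation estimates $\| g_n^N\|_{p_1}\leq N^{1-p/p_1}$, $\|h_n^N\|_{p_2}\leq N^{1-p/p_2}$ are correct, and you correctly diagnose that the Lyapunov step is where the argument breaks. But neither of your two remedies closes that gap. The obstruction is not merely that the pieces fail to lie in both target spaces simultaneously; it is that ``small in $L^{r_1}$ and bounded in $L^{r}$'' does not imply ``small in $L^{r}$''. To run $\|u\|_r\leq\|u\|_{r_1}^t\|u\|_{r_2}^{1-t}$ you need a \emph{bound in $L^{r_2}$}, hence membership of the argument in $L^{p_2}$, and the large-amplitude piece $g_n^N$ simply is not in $L^{p_2}$ (when $p_2>p$). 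The extra spatial cut-off does not repair this: the tail $c_n=f_n\chi_{\{|f_n|\leq N\}\cap B_R^c}$ lies only in $L^{p}\cap L^{p_2}$, its $L^{p}$- and $L^{p_2}$-norms are bounded but \emph{not} uniformly small in $n$ as $R\to\infty$ (mass escapes to infinity), and it is not in $L^{p_1}$, so $\|Tc_n\|_r$ cannot be forced below $\varepsilon$ by any choice of $N,R$. A further warning sign is that your outline never uses the hypothesis $r_1<\infty$, which is essential to the elementary proof. The fallback to the general one-sided compactness interpolation theorem is not circular but is an appeal to a result (Cwikel; Cobos--K\"uhn--Schonbek; the general case was open for decades after Lions--Peetre) that strictly subsumes the statement you are asked to prove, and it additionally requires identifying $L^r=(L^{r_1},L^{r_2})_{1-t,r}$ with the correct second index.

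The standard (and almost certainly intended) proof avoids truncating the functions and instead approximates the \emph{operator}: let $P_m$ be the conditional expectation onto dyadic cubes of side $2^{-m}$ inside $[-2^m,2^m]^n$ (the analogue of the paper's lagom projections). Each $P_m$ is a contraction on every $L^q$ and has finite rank, and $P_m\to I$ strongly on $L^{r_1}$ \emph{because} $r_1<\infty$; since $\overline{T(B_{L^{p_1}})}$ is compact in $L^{r_1}$, strong convergence upgrades to $\|T-P_mT\|_{L^{p_1}\to L^{r_1}}\to 0$. Meanwhile $\|T-P_mT\|_{L^{p_2}\to L^{r_2}}\leq 2\|T\|_{L^{p_2}\to L^{r_2}}$, so Riesz--Thorin applied to $T-P_mT$ gives
$\|T-P_mT\|_{L^{p}\to L^{r}}\leq\|T-P_mT\|_{L^{p_1}\to L^{r_1}}^{t}\,\bigl(2\|T\|_{L^{p_2}\to L^{r_2}}\bigr)^{1-t}\to 0$.
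Thus $T$ is an operator-norm limit of the finite-rank (hence compact) operators $P_mT:L^p\to L^r$, and is therefore compact. Note that here the interpolation inequality is applied to operator norms of a single operator bounded on both couples, which is exactly what sidesteps the membership problem that sinks the function-by-function Lyapunov argument.
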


\section{Compact paraproducts}\label{paraproducts}

For general $Tb_{1}, T^{*}b_{2}\in \CMO_{b}(\mathbb R^{n})$, we construct paraproducts $\Pi_{Tb_{1}}$, $\Pi_{T^{*}b_{2}}^{*}$ with
compact C-Z kernels such that
$\Pi_{Tb_{1}}(b_{1})=Tb_{1}$, $\Pi_{T^{*}b_{2}}^{*}(b_{1})=0$ while
$\Pi_{Tb_{1}}^{*}(b_{2})=0$,
$\Pi_{T^{*}b_2}(b_{2})=T^{*}b_2$.
This way, the operator
$$
\tilde{T}=T-\Pi_{Tb_{1}}-\Pi_{T^{*}b_{2}}^{*}
$$
satisfies the hypotheses of Theorem \ref{L2bounds} and so, $\tilde{T}$ is compact on $L^{p}(\mathbb R^{n})$. Since the paraproducts $\Pi_{Tb_{1}}$ and $\Pi_{T^{*}b_{2}}^{*}$ are compact by construction, we deduce that
the operator $T$ is also compact on $L^{p}(\mathbb R^{n})$.

We start with two technical lemmata. The first one describes the $\BMO_{b}(\mathbb R^{n})- 
H^{1}_{b}(\mathbb R^{n})$ duality. Since this result is well known for bounded accretive functions $b$,
we just sketch its proof to show the validity of the result. Some considerations regarding the use of finite decompositions
should be added to obtain a rigorous demonstration (see \cite{AusBan}). However, since we only use the estimates starting at the right hand side of \eqref{simplification}, the calculations in the paper are not affected by these issues.   
\begin{lemma}\label{dualityBMOH1} Let $b$ be a locally integrable function with non-zero dyadic averages. 
Then for all $f\in \BMO_{b}(\mathbb R^{n})$, $g\in H^{1}_{b}(\mathbb R^{n})$ 
$$
\Big|\int_{\mathbb R^{n}} f(x) g(x)b(x)dx\Big|\leq \|f\|_{BMO_{b}(\mathbb R^{n})}\| g\|_{H^{1}_{b}(\mathbb R^{n})}
$$ 
\end{lemma}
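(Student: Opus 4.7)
The plan is to reduce the statement to the classical Fefferman--Stein $\BMO$--$H^{1}$ duality by observing that if $a_{I}$ is a $p$-atom with respect to $b$ and $I$, then the product $a_{I}b$ is, up to the factor $B_{I,p}^{b}$, a classical atom in $H^{1}(\mathbb R^{n})$. Indeed, $a_{I}b$ is supported on $I_{p}$, it satisfies $\int a_{I}b=0$ (this is precisely what ``mean zero with respect to $b$'' means), and $\|a_{I}b\|_{L^{p}(\mathbb R^{n})}\leq B_{I,p}^{b}|I|^{-1/p'}$ by definition.

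First, I would take a finite atomic decomposition $g=\sum_{I}\lambda_{I}a_{I}$ with $\sum_{I}B_{I,p}^{b}|\lambda_{I}|$ essentially equal to $\|g\|_{H^{1}_{b}(\mathbb R^{n})}$, so that by linearity it suffices to establish the single-atom estimate
\begin{equation*}
\Big|\int_{\mathbb R^{n}} f (a_{I}b)\, dx\Big|\lesssim B_{I,p}^{b}\|f\|_{\BMO_{b}(\mathbb R^{n})}.
\end{equation*}
This is precisely the ``starting from the right-hand side of \eqref{simplification}'' step the paper claims is unaffected by the convergence issues of infinite decompositions, which are deferred to \cite{AusBan}. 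To prove the atomic estimate, I would exploit the mean-zero property to subtract an arbitrary constant and then apply H\"older's inequality with conjugate exponent $p'$:
\begin{equation*}
\Big|\int f (a_{I}b)\Big|
=\Big|\int (f-\langle f\rangle_{I_{p}})(a_{I}b)\Big|
\leq \|f-\langle f\rangle_{I_{p}}\|_{L^{p'}(I_{p})}\cdot B_{I,p}^{b}|I|^{-1/p'}.
\end{equation*}
Combining the embedding $\BMO_{b}(\mathbb R^{n})\subseteq \BMO(\mathbb R^{n})$ noted in the paper after Definition \ref{CMObdef} (a consequence of $\alpha_{I,J}\gtrsim 1$) with the classical John--Nirenberg inequality would then give $\|f-\langle f\rangle_{I_{p}}\|_{L^{p'}(I_{p})}\lesssim |I|^{1/p'}\|f\|_{\BMO_{b}(\mathbb R^{n})}$, closing the atomic bound. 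Summing over $I$ and passing to the infimum in the definition of $\|g\|_{H^{1}_{b}(\mathbb R^{n})}$ completes the proof.

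The main obstacle in making this fully rigorous is not any of the analytic estimates, which are routine consequences of mean zero, H\"older, and John--Nirenberg, but the convergence issues of the atomic decomposition when one drops the finiteness assumption on $\sum_{I}\lambda_{I}a_{I}$: interchanging the integral with the (possibly infinite) sum and verifying that $\int fgb$ is independent of the decomposition chosen are the technical points the paper relegates to \cite{AusBan}. A secondary delicate step is extracting a usable norm inequality $\|f\|_{\BMO}\lesssim \|f\|_{\BMO_{b}}$ from the embedding $\BMO_{b}\subseteq \BMO$, but this can be read off from the Carleson-type sum built into Definition \ref{CMObdef} together with the lower bound $\alpha_{I,J}\gtrsim 1$.
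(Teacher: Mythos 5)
Your observation that $(B_{I,p}^{b})^{-1}a_{I}b$ is (up to the harmless distinction between $|I|$ and $|I_{p}|$) a classical $p$-atom is correct, and your identification of the convergence issues for infinite decompositions matches what the paper defers to \cite{AusBan}. The genuine gap is the step where you trade $\|f\|_{\BMO_{b}(\mathbb R^{n})}$ for the classical $\BMO$ norm in order to invoke John--Nirenberg. The inequality $\|f-\langle f\rangle_{I_{p}}\|_{L^{p'}(I_{p})}\lesssim |I_{p}|^{1/p'}\|f\|_{\BMO_{b}(\mathbb R^{n})}$ requires the quantitative comparison $\|f\|_{\BMO}\lesssim \|f\|_{\BMO_{b}}$, and this does \emph{not} follow from $\alpha_{I,J}\gtrsim 1$. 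Dropping the weights $\alpha_{I,J}$ in Definition \ref{CMObdef} only leaves the Carleson sum of the adapted coefficients $\langle f,\tilde\psi_{J}^{b}\rangle=|J|^{1/2}\big(\langle f\rangle_{J}-\tfrac{\langle b\rangle_{J}}{\langle b\rangle_{J_{p}}}\langle f\rangle_{J_{p}}\big)$, which differ from the Haar-type coefficients $|J|^{1/2}(\langle f\rangle_{J}-\langle f\rangle_{J_{p}})$ by the term $|J|^{1/2}\big(\tfrac{\langle b\rangle_{J}}{\langle b\rangle_{J_{p}}}-1\big)\langle f\rangle_{J_{p}}$. Controlling the Carleson sum of these error terms requires bounds on $|\langle f\rangle_{J_{p}}|$ and on $1/|\langle b\rangle_{J_{p}}|$ that are precisely what is unavailable for non-accretive $b$; the paper's remark that $\BMO_{b}(\mathbb R^{n})\subseteq\BMO(\mathbb R^{n})$ is a set inclusion whose one-line justification does not deliver the norm inequality your argument needs, and the paper's own proof of the lemma never uses it.

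The paper instead stays entirely inside the adapted wavelet calculus: it expands the atom as $a_{I}b=\sum_{J\in\mathcal D(I)}\langle a_{I}b,\tilde\psi_{J}^{b}\rangle\psi_{J}^{b}$ via Lemma \ref{densityinL2}, justifies the interchange of sum and integral by an $L^{1}$ bound and Vitali convergence, and then applies Cauchy--Schwarz with the weight $[b]_{J_{p},2}/|\langle b\rangle_{J_{p}}|$ on one factor and its inverse on the other. The atom factor is controlled by the adapted Bessel-type inequality of Remark \ref{realPlanche}, giving $\|a_{I}b\|_{L^{2}(I)}\lesssim B_{I,2}^{b}|I|^{-1/2}$, while the $f$ factor is, by construction, dominated by the supremum defining $\|f\|_{\BMO_{b}(\mathbb R^{n})}$. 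To repair your argument you would either have to prove $\|f\|_{\BMO}\lesssim\|f\|_{\BMO_{b}}$ separately (which appears at least as delicate as the lemma itself) or replace the H\"older/John--Nirenberg step by this wavelet expansion of the atom.
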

\proof 
We assume that $g\in \mathcal C_{0}(\mathbb R^{n})$ with support in $Q\in \mathcal D$. 
By Definition \ref{defH1dual} of $H^{1}_{b}(\mathbb R^{n})$, there exists a decomposition $g=\sum_{I\in \mathcal D}
\lambda_{I}a_{I}$ with $a_{I}$ $L^{2}$-atoms supported on $I_{p}\in \mathcal D(Q)$ and 
$
\| a_{I}b\|_{L^{2}(I)}\lesssim B_{I,2}^{b}
|I|^{-\frac{1}{2}}
$, such that $\sum_{I\in \mathcal D}
B_{I,2}^{b}|\lambda_{I}|\leq 2\| g\|_{H^{1}_{b}(\mathbb R^{n})}$. 
By Lemma \ref{densityinL2}, we have that 
$$
a_{I}b=\sum_{J\in {\mathcal D(I)}} 
\langle a_{I}b,\tilde \psi_{J}^{b}\rangle \psi_{J}^{b}
$$ 
with a.e. convergence and 
$J_{p}\subseteq I_{p}$ since otherwise $\langle a_{I},\tilde\psi_{J}^{b}\rangle=0$. This is trivial when $I_{p}\cap J_{p}=\emptyset $. When $I_{p}\subsetneq J_{p}$, is due to $a_{I}b$ having mean zero and $\tilde\psi_{J}^{b}$ being constant on the support of $a_{I}b$.
Moreover, 
\begin{align*}
\Big\| \sum_{I\in \mathcal D}
\lambda_{I}a_{I}b\Big\|_{L^{1}(Q)}
&
\leq \sum_{I\in \mathcal D}
|\lambda_{I}|\|a_{I}b\|_{L^{2}(I)}|I|^{\frac{1}{2}}
\leq \sum_{I\in \mathcal D}
|\lambda_{I}|B_{I,2}^{b}
\lesssim \| g\|_{H^{1}_{b}(\mathbb R^{n})}
\end{align*}
and so, by Vitali's Dominated Convergence Theorem, 
\begin{equation}\label{simplification}
\int f(x) g(x)b(x)dx
=\sum_{I\in \mathcal D}
\lambda_{I}\sum_{J\in {\mathcal D(I)}} 
\langle f,\psi_{J}^{b}\rangle 
\langle a_{I}b,\tilde\psi_{J}^{b}\rangle .
\end{equation}
Then, by Cauchy-Schwarz inequality,
\begin{align*}
\Big|\int f(x) g(x)b(x)dx\Big|
&\leq \sum_{I\in \mathcal D}
|\lambda_{I}|\Big(\sum_{J\in {\mathcal D(I)}}
\Big(\frac{[b]_{J_{p},2}}{|\langle b\rangle_{J_{p}}|}\Big)^{2}|\langle f,\psi_{J}^{b}\rangle |^{2}\Big)^{\frac{1}{2}}
\\
&\hskip70pt\Big(\sum_{J\in {\mathcal D(I)}}
\Big(\frac{[b]_{J_{p},2}}{|\langle b\rangle_{J_{p}}|}\Big)^{-2}|\langle a_{I}b,\tilde\psi_{J}^{b}\rangle |^{2}\Big)^{\frac{1}{2}}
\end{align*}
By Remark \ref{realPlanche} the last factor is bounded by 
$
\| a_{I}b\|_{L^{2}(I)}\lesssim B_{I,2}^{b}
|I|^{-\frac{1}{2}}
$. 
Then, by Definitions \ref{CMObdef} and \ref{defH1dual},
\begin{align*}
\Big|\int f(x) g(x)dx\Big|
&\leq \sum_{I\in \mathcal D}
|\lambda_{I}|B_{I,2}^{b}
\Big(\frac{
1}{|I|}\sum_{J\in {\mathcal D(I)}} 
\Big(\frac{[b]_{J_{p},2}}{|\langle b\rangle_{J_{p}}|}\Big)^{2}
|\langle f,\psi_{J}^{b}\rangle |^{2}\Big)^{\frac{1}{2}}
\\
&\leq \sum_{I\in \mathcal D}
|\lambda_{I}|B_{I,2}^{b}
\|f\|_{\BMO_{b}(\mathbb R^{n})}
\leq \| f\|_{\BMO_{b}(\mathbb R^{n})}\| g\|_{H^{1}_{b}(\mathbb R^{n})} .
\end{align*}

%

Although the wavelet system $(\psi_{I})_{I\in \mathcal D}$ is not orthogonal, we have
the following lemma, which is a direct consequence of \eqref{psiortho1}, \eqref{psiortho2} and \eqref{2^d-1}.
\begin{lemma}\label{enoughortho} 
Let $(\alpha_{J})_{J\in \mathcal D}$ a sequence of complex numbers and let $f=\sum_{J\in \mathcal D}
\alpha_{J}
\psi_{J}^{b}$. Then 
$
\langle f,\tilde\psi_{I}^{b}\rangle =\alpha_{I}.
$
\end{lemma}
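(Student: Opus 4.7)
The plan is to compute $\langle f,\tilde\psi_I^b\rangle$ by direct expansion, using the three orthogonality relations to reduce the sum to the finite cluster of siblings of $I$ and then evaluate explicitly. Assuming the expansion of $f$ converges in a sense allowing termwise pairing, I would first write
\[
\langle f,\tilde\psi_I^b\rangle = \sum_{J\in\mathcal D}\alpha_J\langle\psi_J^b,\tilde\psi_I^b\rangle .
\]
By \eqref{psiortho1}, every term with $J_p\neq I_p$ kills, so the only surviving contributions come from $J\in\child(I_p)$.

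For each such $J$, the symmetric form of \eqref{psiortho2} gives
\[
\langle\psi_J^b,\tilde\psi_I^b\rangle = \delta(J-I)-\frac{|I|\langle b\rangle_I}{|I_p|\langle b\rangle_{I_p}} .
\]
The Kronecker $\delta$ isolates the coefficient $\alpha_I$, and collecting the remaining terms yields
\[
\langle f,\tilde\psi_I^b\rangle = \alpha_I - \frac{|I|\langle b\rangle_I}{|I_p|\langle b\rangle_{I_p}}\sum_{J\in\child(I_p)}\alpha_J .
\]

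The remaining step, and the main subtlety, is to absorb the correction term involving $\sum_{J\in\child(I_p)}\alpha_J$. Here I would invoke \eqref{2^d-1}, $\sum_{J\in\child(I_p)}\tilde\psi_J^b=0$, together with its counterpart \eqref{noLI} for the $\psi_J^b$'s. The latter asserts the linear dependence $\sum_{J\in\child(Q)}\langle b\rangle_J|J|^{1/2}\psi_J^b=0$, so the representation $f=\sum_J\alpha_J\psi_J^b$ is only determined modulo addition, on each sibling cluster, of a multiple of $\langle b\rangle_J|J|^{1/2}$. One checks (using $\sum_{J\in\child(Q)}\langle b\rangle_J=2^n\langle b\rangle_Q$) that this gauge freedom allows selection of the canonical representative with $\sum_{J\in\child(Q)}\alpha_J=0$, and one also verifies that under this gauge change the quantity $\langle f,\tilde\psi_I^b\rangle$ is unaffected, precisely because of \eqref{2^d-1}. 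With the canonical normalization the correction vanishes and $\langle f,\tilde\psi_I^b\rangle=\alpha_I$ as claimed.

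The hard part is therefore not the orthogonality arithmetic but the conceptual role of \eqref{2^d-1}: one must justify both the legitimacy of choosing coefficients satisfying the sibling cancellation (afforded by \eqref{noLI}) and the gauge invariance of the left-hand side (afforded by \eqref{2^d-1}). Everything else is a short, mechanical application of \eqref{psiortho1} and \eqref{psiortho2}.
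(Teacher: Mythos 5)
Your computation up to the display
\[
\langle f,\tilde\psi_{I}^{b}\rangle=\alpha_{I}-\frac{|I|\langle b\rangle_{I}}{|I_{p}|\langle b\rangle_{I_{p}}}\sum_{J\in\child(I_{p})}\alpha_{J}
\]
is exactly the ``direct consequence of \eqref{psiortho1} and \eqref{psiortho2}'' that the paper has in mind (the paper offers no written proof beyond that remark), and you are right that the entire content of the lemma is in disposing of the correction term. The gap is in how you dispose of it. Re-gauging via \eqref{noLI} replaces $\alpha_{J}$ by $\alpha_{J}+c_{Q}\langle b\rangle_{J}|J|^{1/2}$ on each sibling cluster; after that the correction term indeed vanishes, but the surviving coefficient is the \emph{new} $\alpha_{I}$, not the one you were given. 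Your own gauge-invariance check shows why this cannot be repaired: $\langle f,\tilde\psi_{I}^{b}\rangle$ depends only on $f$, while $\alpha_{I}$ does not, so the identity $\langle f,\tilde\psi_{I}^{b}\rangle=\alpha_{I}$ is false for a general representation. Concretely, taking $\alpha_{J}=1$ for $J\in\child(Q)$ and $0$ otherwise gives $\langle f,\tilde\psi_{I}^{b}\rangle=1-\langle b\rangle_{I}/\langle b\rangle_{I_{p}}\neq 1$ in general.

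The statement that is actually needed, and actually used, carries the extra hypothesis $\sum_{J\in\child(Q)}\alpha_{J}=0$ for every $Q$. In the proof of Proposition \ref{paraproducts1} the coefficients are $\alpha_{J}=\frac{\chi_{J_{p}}(t)}{|J_{p}|}\bigl(\tilde\psi_{J}^{b_{2}}(x)-\tilde\psi_{J}^{b_{2}}(x')\bigr)$, and the sibling sums vanish precisely by \eqref{2^d-1} --- which is why \eqref{2^d-1} is cited alongside \eqref{psiortho1} and \eqref{psiortho2}. Under that hypothesis your first display finishes the proof in one line, with no gauge fixing. So the fix is to add the sibling-cancellation hypothesis (or restrict to the coefficients arising in the application) and delete the renormalization step, which as written proves an identity for the canonical coefficients rather than the given ones.
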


Now we state and prove the main result of this section. 
\begin{proposition}\label{paraproducts1} Let $b_{1}, b_{2}$ be locally integrable functions with non-zero dyadic averages 
and let $(\psi_I^{b_{2}})_{I\in \mathcal D}$ 
be 
the Haar wavelet system of Definition \ref{defpsi}. We assume 
$Tb_{1}\in \CMO_{b}(\mathbb R^{n})$. 
Then the  operator 
\begin{align}\label{defparaproduct}
\langle \Pi_{Tb_{1}}f,g\rangle 
&=\sum_{I\in\mathcal D}
\langle Tb_1, \psi_{I}^{b_{2}}\rangle \frac{\langle f\rangle_{I_{p}}}{\langle b_{1}\rangle_{I_{p}}}
 \langle g,\tilde\psi_{I}^{b_{2}}\rangle
\end{align}
has a compact Calder\'on-Zygmund kernel, it  is compact on $L^{p}(\mathbb R^{n})$ for all $1<p<\infty $, and satisfies
$
\langle \Pi_{Tb_{1}}b_{1},g\rangle =\langle Tb_1,g\rangle
$
and
$
\langle \Pi_{Tb_{1}}^{*}b_{2},f\rangle =0
$.
\end{proposition}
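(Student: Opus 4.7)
The plan is to verify the four assertions---the algebraic identities $\Pi_{Tb_1}b_1 = Tb_1$ and $\Pi_{Tb_1}^*b_2 = 0$, the compact Calder\'on-Zygmund kernel structure, and the $L^p$-compactness---in roughly that order, leaning on the wavelet machinery already developed. The argument hinges on two ingredients: the $\CMO_b$ characterization of Lemma \ref{lem:cmochar}, which encodes the decay of the coefficients $\langle Tb_1,\psi_I^{b_2}\rangle$ as $I$ leaves $\mathcal D_M$, and the Carleson-type bounds from Lemma \ref{Planche} and Lemma \ref{Carleson} that are needed to sum the wavelet series.

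For the two algebraic identities I would start directly from \eqref{defparaproduct}. Setting $g=b_2$ immediately gives $\Pi_{Tb_1}^*b_2=0$, since $\langle b_2,\tilde\psi_I^{b_2}\rangle = \int \tilde\psi_I^{b_2}b_2 = 0$ by Lemma \ref{psiortho} and so every term vanishes. Setting $f=b_1$ collapses the ratio $\langle f\rangle_{I_p}/\langle b_1\rangle_{I_p}$ to $1$, so what remains is the identity
\[
\langle Tb_1,g\rangle \;=\; \sum_{I\in\mathcal D}\langle Tb_1,\psi_I^{b_2}\rangle\langle g,\tilde\psi_I^{b_2}\rangle
\]
for $g$ integrable, compactly supported, of mean zero, which is precisely the reconstruction formula of Lemma \ref{densityinL2} applied to $b=b_2$ and paired against $Tb_1$ (the pairing being well-defined on such $g$ by Lemma \ref{definecmo}).

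For $L^p$-boundedness I would write $\Pi_{Tb_1}f = \sum_I c_I(f)\,\psi_I^{b_2}$ with $c_I(f) = \langle Tb_1,\psi_I^{b_2}\rangle\,\langle f\rangle_{I_p}/\langle b_1\rangle_{I_p}$, as justified by Lemma \ref{enoughortho}. The $L^2$ estimate then follows from expanding $\|\Pi_{Tb_1}f\|_2^2$ as a double sum, matching $\psi_I^{b_2}$ against $\tilde\psi_J^{b_2}$ via Lemma \ref{psiortho}, and applying the Carleson Embedding Theorem (Lemma \ref{Carleson}, in the form \eqref{Carleson2}) to the sequence $a_I = |\langle Tb_1,\psi_I^{b_2}\rangle|^2$: the Carleson condition \eqref{Carlesonsequence} is precisely the $\BMO_b$-bound built into Definition \ref{CMObdef}, while the averages $\langle f\rangle_{I_p}$ are dominated by the maximal function of $f$. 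This yields $\|\Pi_{Tb_1}f\|_2 \lesssim \|Tb_1\|_{\BMO_b}\|f\|_2$, and the extension to $L^p$ for all $1<p<\infty$ is standard via interpolation and duality.

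The main obstacle, and the heart of the proposition, is the simultaneous verification of the compact Calder\'on--Zygmund kernel condition and the actual $L^p$-compactness of $\Pi_{Tb_1}$. For compactness I would approximate $\Pi_{Tb_1}$ in operator norm by the finite-rank truncations
\[
\Pi_{Tb_1}^M f \;=\; \sum_{I\in \mathcal D_M}\langle Tb_1,\psi_I^{b_2}\rangle\,\frac{\langle f\rangle_{I_p}}{\langle b_1\rangle_{I_p}}\,\psi_I^{b_2},
\]
which have finite rank since $\mathcal D_M$ is a finite family of cubes. The tail $\Pi_{Tb_1}-\Pi_{Tb_1}^M$ is bounded by exactly the same $L^2$ argument, but with the Carleson supremum now restricted to $I\in\mathcal D_M^c$; by the $\CMO_b$-characterization of Lemma \ref{lem:cmochar} this supremum tends to zero as $M\to\infty$. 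Hence $\Pi_{Tb_1}$ is a norm limit of finite-rank operators on $L^2$, and the $L^p$-compactness for all $1<p<\infty$ then follows by interpolating the $L^2$-compactness against the $L^p$-boundedness via Krasnoselskii's theorem. To verify that the kernel
\[
K(t,x) \;=\; \sum_{I\in\mathcal D}\langle Tb_1,\psi_I^{b_2}\rangle\,\frac{\chi_{I_p}(t)}{|I_p|\langle b_1\rangle_{I_p}}\,\tilde\psi_I^{b_2}(x)
\]
satisfies Definition \ref{prodCZoriginal}, I would telescope into dyadic scales: the H\"older-type estimate for the wavelet increments produces the factor $(|t-t'|+|x-x'|)^\delta/|t-x|^{n+\delta}$, and the three decay functions $L,S,D$ are constructed directly from the localized $\BMO_b$-suprema of $Tb_1$ over cubes with $\ell(I)$ large, $\ell(I)$ small, or $\rdist(I,\mathbb B)$ large, with \eqref{limits} verified using Lemma \ref{lem:cmochar}. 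Turning the qualitative $\CMO_b$ limit into honest auxiliary functions $L,S,D$ satisfying \eqref{smoothcompactCZ} is where the delicacy lies.
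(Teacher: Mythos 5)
Your treatment of the two algebraic identities and of the $L^{2}$-compactness matches the paper's in substance: the identities follow from Lemma \ref{psiortho} and the reconstruction formula of Lemma \ref{densityinL2}, and the compactness is obtained by showing the tail beyond $\mathcal D_{M}$ has operator norm controlled by a restricted Carleson supremum that vanishes as $M\to\infty$ by the $\CMO_{b}$ hypothesis, followed by Krasnoselskii interpolation. (The paper routes the truncation through the projection $P_{M}^{*}$ and must first prove ${P_{M}^{*}}^{\perp}\Pi_{Tb_{1}}=\Pi_{{P_{M}^{*}}^{\perp}Tb_{1}}$, which takes some care because the wavelet system is not orthogonal; your direct truncation over $\mathcal D_{M}$ avoids that step and is a legitimate variant.)

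The genuine gap is in the kernel verification. You propose to show that $K(t,x)=\sum_{I}\langle Tb_{1},\psi_{I}^{b_{2}}\rangle\frac{\chi_{I_{p}}(t)}{|I_{p}|\langle b_{1}\rangle_{I_{p}}}\tilde\psi_{I}^{b_{2}}(x)$ satisfies Definition \ref{prodCZoriginal} by invoking ``the H\"older-type estimate for the wavelet increments.'' No such estimate exists: $\tilde\psi_{I}^{b_{2}}$ and $\chi_{I_{p}}$ are step functions, so the increment $\tilde\psi_{I}^{b_{2}}(x)-\tilde\psi_{I}^{b_{2}}(x')$ can be of size $|I|^{-1/2}$ even when $|x-x'|$ is arbitrarily small, whenever $x,x'$ straddle a boundary of a child of $I_{p}$. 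Consequently the kernel genuinely fails the pointwise smoothness condition \eqref{smoothcompactCZ}, and the paper says so explicitly. The correct route is to prove the weaker \emph{perfect dyadic} estimate $|K(t,x)-K(t,x')|\lesssim \ell(J)\,|t-x|_{\infty}^{-(n+1)}\cdot(\text{decay})$ for $t\in I$ and $x,x'\in J$ with $I,J$ dyadic, and to observe (as the paper does) that this weaker condition suffices for every argument in which the kernel smoothness is used. The decay factor is then extracted in three regimes: for $|t-x|_{\infty}$ large or $|t+x|_{\infty}$ large all contributing cubes lie in $\mathcal D_{M}^{c}$ and one gets $\|{P_{M}^{*}}^{\perp}Tb_{1}\|_{\BMO_{b}}$; for $|t-x|_{\infty}$ small the cubes with $2^{-M}\le\ell(I)\le 2^{M}$ cannot be discarded this way, and one needs a separate counting argument (splitting on whether $|t-x|_{\infty}\le 2^{-M/2}\ell(I_{t,x})$) that produces a factor $2^{-Mn/2}\|Tb_{1}\|_{\BMO_{b}}$. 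Your plan does not account for this last case, and as written the claim that the kernel satisfies Definition \ref{prodCZoriginal} is false. Relatedly, your $L^{p}$-boundedness ``by interpolation and duality'' from the $L^{2}$ bound implicitly needs these kernel estimates (in their dyadic form) to run the Calder\'on--Zygmund machinery off $L^{2}$.
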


\begin{remark} The proof shows that $\Pi_{Tb_{1}}$ is a perfect Calder\'on-Zygmund operator (see \cite{AHMTT} for the definition). Moreover, writing $\bar{E}_{I}^{b_{1}}(f)=\frac{\langle f\rangle_{I}}{\langle b_{1}\rangle_{I}}\chi_{I}$, we have
$
\Pi_{Tb_{1}}f=\sum_{k\in \mathbb Z}\Delta_{k}^{b_{2}}(Tb_{1})\bar{E}^{b_{1}}_{k-1}(f)
$.

On the other hand, since $\langle Tb_{1}, \psi_{J}^{b_{2}}\rangle =\langle T_{b}1, h_{J}^{b_{2}}\rangle$ and 
$\tilde\psi_{I}^{b_{2}}=\langle b_{2}\rangle_{I}h_{I}^{b_{2}}$, we have 
\begin{align*}
\langle \Pi_{Tb_{1}}f,b_{2}g\rangle
&=\sum_{I\in\mathcal D}
\langle T_{b}1, \tilde \psi_{I}^{b_{2}}\rangle \frac{\langle f\rangle_{I_{p}}}{\langle b_{1}\rangle_{I_{p}}}
 \langle g,\psi_{I}^{b_{2}}\rangle
 =: \langle {\bf \Pi}_{T_{b}1}f,g\rangle .
\end{align*}
This is the paraproduct needed to prove compactness of $T_{b}$. Moreover,  
$
{\bf \Pi}_{T_{b}1}f =\sum_{k\in \mathbb Z}(\Delta_{k}^{b_{2}})^{*}(T_{b}1)\bar{E}^{b_{1}}_{k-1}(f)
$.

\end{remark}


\proof



Formally, 
$
\langle \Pi_{Tb_{1}}b_{1},g\rangle 
=\langle Tb_1, \sum_{I\in\mathcal D}
\langle g, \tilde{\psi}_{I}^{b_{2}}\rangle \psi_{I}^{b_{2}}
\rangle
=\langle  Tb_1, g\rangle 
$.
Moreover, by Lemma \ref{psiortho} we get $\langle b_{2},\tilde{\psi}_{I}^{b_{2}}\rangle
=0$ and so, 
$\langle \Pi_{Tb_{1}}f,b_{2}\rangle =0
$.

To prove that $\Pi_{Tb_{1}}$ is compact on $L^2(\mathbb R^{n})$ 
we verify that 
$\langle {P_{M}^{*}}^{\perp}\Pi_{Tb_{1}}f,g\rangle $
tends to zero uniformly for all $f,g$ in the unit ball of $L^{2}(\mathbb R^{n})$, with $P_{M}$ the projection operator associated with $(\psi_{J}^{b_{2}})_{J}$.  
We start by proving the equality
$
\langle {P_{M}^{*}}^{\hspace{-.1cm}\perp}\Pi_{Tb_{1}}f,g\rangle 
=\langle \Pi_{{P_{M}^{*}}^{\hspace{-.1cm}\perp}Tb_1}f,g\rangle .
$

Since $g\in L^{2}(\mathbb R^{n})$, by Lemma \ref{densityinL2} we have  
$
P_{M}^{\perp }g=\sum_{J\in {\mathcal D}_{M}^{c}}
\langle g, \tilde{\psi}_{J}^{b_{2}}\rangle \psi_{J}^{b_{2}}
$
with a.e. pointwise convergence. Moreover, by the orthogonality properties of Lemma \ref{psiortho},
$\langle P_{M}^{\perp}g,\tilde{\psi}_{I}^{b_{2}}\rangle = \sum_{J\in \child(I_{p})}\langle g,\tilde{\psi}_{J}^{b_{2}}\rangle 
\langle \psi_{J}^{b_{2}},\tilde{\psi}_{I}^{b_{2}}\rangle $.
Then 
\begin{align*}
\langle {P_{M}^{*}}^{\perp}\Pi_{Tb_{1}}f,g\rangle &
=\sum_{I\in {\mathcal D}}
\langle Tb_1,\psi_{I}^{b_{2}}\rangle
\frac{\langle f\rangle_{I_{p}}}{\langle b_{1}\rangle_{I_{p}}}
\langle P_{M}^{\perp}g,\tilde{\psi}_{I}^{b_{2}}\rangle 
\\
&=\sum_{I\in {\mathcal D}_{M}^{c}}\sum_{J\in \child(I_{p})}\langle Tb_1,\psi_{I}^{b_{2}}\rangle
\frac{\langle f\rangle_{I_{p}}}{\langle b_{1}\rangle_{I_{p}}}
\langle g,\tilde{\psi}_{J}^{b_{2}}\rangle 
\langle \psi_{J}^{b_{2}},\tilde{\psi}_{I}^{b_{2}}\rangle .
\end{align*}

On the other hand, since $Tb_{1}\in \CMO_{b}(\mathbb R^{n})$, we have
$
{P_{M}^{*}}^{\hspace{-.1cm}\perp }Tb_{1}=\sum_{J\in {\mathcal D}_{M}^{c}}
\langle Tb_{1}, \psi_{J}^{b_{2}}\rangle \tilde{\psi}_{J}^{b_{2}}
$
with a.e. pointwise convergence. By  Lemma \ref{psiortho}, 
$\langle {P_{M}^{*}}^{\hspace{-.1cm}\perp}Tb_{1},\psi_{I}^{b_{2}}\rangle = \sum_{J\in \child(I_{p})} \langle Tb_1,\psi_{J}^{b_{2}}\rangle 
\langle  \psi_{I}^{b_{2}},\tilde{\psi}_{J}^{b_{2}}\rangle $. Then, since $J_{p}=I_{p}$,
\begin{align*}
\langle \Pi_{{P_{M}^{*}}^{\hspace{-.1cm}\perp}Tb_{1}}f,g\rangle 
&=\sum_{I\in {\mathcal D}} \langle {P_{M}^{*}}^{\hspace{-.1cm}\perp}Tb_{1},\psi_{I}^{b_{2}}\rangle 
\frac{\langle f\rangle_{I_{p}}}{\langle b_{1}\rangle_{I_{p}}}
\langle g,\tilde{\psi}_{I}^{b_{2}}\rangle
\\
&=\sum_{J\in {\mathcal D}_{M}^{c}}\sum_{I\in \child(J_{p})} \langle Tb_1,\psi_{J}^{b_{2}}\rangle 
\langle  \psi_{I}^{b_{2}},\tilde{\psi}_{J}^{b_{2}}\rangle
\frac{\langle f\rangle_{J_{p}}}{\langle b_{1}\rangle_{J_{p}}}
\langle g,\tilde{\psi}_{I}^{b_{2}}\rangle .
\end{align*}
%
Symmetry of $I$, $J$ in previous expressions proves the claimed equality.

Now, by Carleson's Theorem 
(in particular \eqref{Carleson2}) and remark \ref{realPlanche},   
\begin{align*}
|\langle &{P_{M}^{*}}^{\hspace{-.1cm}\perp}\Pi_{Tb_{1}}f,g\rangle|
\!=\! |\langle \Pi_{{P_{M}^{*}}^{\hspace{-.1cm}\perp}Tb_1}f,g\rangle |
\!=\! \Big|\sum_{I\in {\mathcal D}} 
\langle {P_{M}^{*}}^{\hspace{-.1cm}\perp}Tb_{1}, \psi_{I}^{b_{2}}\rangle 
\frac{\langle f\rangle_{I_{p}}}{\langle b_{1}\rangle_{I_{p}}}
\langle g,\tilde\psi_{I}^{b_{2}}\rangle \Big|
\\
&\lesssim \Big(\sum_{I\in {\mathcal D}}
\Big(\frac{[b_{2}]_{I_{p},2}}{|\langle b_{1}\rangle_{I_{p}}||\langle b_{2}\rangle_{I_{p}}|}\Big)^{2}
|\langle {P_{M}^{*}}^{\hspace{-.1cm}\perp}Tb_{1}, \psi_{I}^{b_{2}}\rangle|^{2}
|\langle f\rangle_{I_{p}}|^{2}\Big)^{\frac{1}{2}}
\\
&\hskip40pt
\Big(\sum_{I\in {\mathcal D}}
\Big(\frac{[b_{2}]_{I_{p},2}}{|\langle b_{2}\rangle_{I_{p}}|}\Big)^{-2}|\langle g,\tilde{\psi}_{I}^{b_{2}}\rangle|^{2} \Big)^{\frac{1}{2}}
\\
&\lesssim \sup_{I\in \mathcal I}\Big(\frac{1}{|I|}
\hspace{-.3cm}\sum_{\tiny \begin{array}{c}J\in {\mathcal D}_{M}^{c}\\ J\subset I \end{array}}
\hspace{-.3cm}
\Big(\frac{[b_{2}]_{J_{p},2}}{|\langle b_{1}\rangle_{J_{p}}||\langle b_{2}\rangle_{J_{p}}|}\Big)^{2}
|\langle {P_{M}^{*}}^{\hspace{-.1cm}\perp}Tb_{1}, \psi_{J}^{b_{2}}\rangle|^{2}
\Big)^{\frac{1}{2}}
\| f\|_{2}\| g\|_{2}
\\
&
\leq \| {P_{M}^{*}}^{\hspace{-.1cm}\perp}Tb_{1}\|_{\BMO_{b}(\mathbb R^{n})},
\end{align*}
which tends to zero when $M$ tends to infinity since $Tb_{1}\in \CMO_{b}(\mathbb R^{n})$. 
To end the proof, we show that $\Pi_{Tb_{1}}$ has a a compact Calder\'on-Zygmund kernel, namely, 
that the integral representation of Definition \ref{intrep} holds.
For $f,g$ with disjoint support, 
$$
\langle \Pi_{Tb_{1}}f,g\rangle 
=\int_{\mathbb R^{n}}\int_{\mathbb R^{n}} f(t)g(x)
\sum_{I\in {\mathcal D}}
\langle Tb_{1}, \psi_{I}^{b_{2}}\rangle
\frac{1}{\langle b_{1}\rangle_{I_{p}}}\frac{\chi_{I_{p}}(t)}{|I_{p}|}\tilde{\psi}_{I}^{b_{2}}(x)dtdx .
$$
As we will see later, the disjointness of the supports of $f$ and $g$ guarantees the convergence of the infinite sum.
The kernel of $\Pi_{Tb_{1}}$ is hence 
$$
K(t,x)=\Big\langle Tb_{1},
\sum_{I\in {\mathcal D}}
\frac{1}{\langle b_{1}\rangle_{I_{p}}}\frac{\chi_{I_{p}}(t)}{|I_{p}|}\tilde{\psi}_{I}^{b_{2}}(x)
\psi_{I}^{b_{2}}\Big\rangle .
$$

Due to the singularities of $\chi_{I_{p}}$ and $\tilde{\psi}_{I}^{b_{2},i}$, 
this kernel does not satisfy 
Definition \ref{prodCZoriginal} of a compact Calder\'on-Zygmund kernel. However, a 
careful read of the proofs presented shows that all results hold if the kernel satisfies the following 
alternative inequality: given $I,J\in \mathcal D$, 
$$
|K(t,x)-K(t,x')|\lesssim \frac{\ell(J)^{\delta }}{|t-x|_{\infty }^{n+\delta}}L(|t-x|_{\infty })S(|t-x|_{\infty })D(|t+x|_{\infty })
$$
for all $t\in I$ and $x,x'\in J$ with $2|x-x'|_{\infty }<|t-x|_{\infty }$. 
We will  prove that the kernel of $\Pi_{Tb_{1}}$ satisfies this inequality with $\delta=1$. 
This is equivalent to saying that $\Pi_{Tb_{1}}$ has a perfect Calder\'on-Zygmund kernel. 

In fact, we will prove that $|K(t,x)-K(t,x')|$ can be estimated by $\ell(J)/|t-x|_{\infty }^{n+1}$ times a bounded function which tends to zero when $|t-x|_{\infty }\rightarrow 0$ or $|x-t|_{\infty }\rightarrow 0$ or $|t+x|_{\infty }\rightarrow \infty$.
%
First of all, we have
$$
K(t,x)-K(t,x')=
\sum_{I\in {\mathcal D}}
\langle Tb_{1}, \psi_{I}^{b_{2}}\rangle \frac{1}{\langle b_{1}\rangle_{I_{p}}}\frac{\chi_{I_{p}}(t)}{|I_{p}|}
(\tilde{\psi}_{I}^{b_{2}}(x)-\tilde{\psi}_{I}^{b_{2}}(x')).
$$
To simplify notation, we write $\tilde{\Psi}_{I}^{b_{2}}(x,x')=\tilde{\psi}_{I}^{b_{2}}(x)-\tilde{\psi}_{I}^{b_{2}}(x')$.

We note that 
$\chi_{I}(t)\tilde{\Psi}_{I}^{b_{2}}(x,x')\neq 0$  
implies that for all cubes $I$ in the sum we have
$t\in I_{p}$ and either $x\in I$ or $x'\in I$. Let $I_{t,x,x'}$, $I_{t,x}$, $I_{t,x'}$ and $I_{x,x'}$
be the smallest dyadic cubes containing 
the points in the subindexes.  
By hypothesis, $|t-x|_{\infty }\approx |t-x'|_{\infty }$. And 
by symmetry, we assume $|t-x|_{\infty }\leq |t-x'|_{\infty }$. 
Then all cubes $I$ in the sum satisfy $I_{t,x}\subset I_{p}$ and 
the previous expression can be written as
\begin{equation}\label{K-K}
\sum_{\tiny \begin{array}{c}I\in {\mathcal D}\\ I_{t,x}\subseteq I_{p}\end{array}}
\langle Tb_{1}, \psi_{I}^{b_{2}}\rangle \frac{\chi_{I_{p}}(t)}{\langle b_{1}\rangle_{I_{p}}|I_{p}|}
\tilde{\Psi}_{I}^{b_{2}}(x,x').
\end{equation}
Notice that $|t-x|_{\infty }\leq  \ell(I_{t,x})\leq \ell(I_{t,x,x'})$. We will see later that
if $I_{x,x'}\subsetneq I_{t,x}=I_{t,x,x'}$ then $K(t,x)-K(t',x)=0$. That is, $K$ is a perfect Calder\'on-Zygmund kernel.

Since $Tb_{1}\in \CMO_{b}(\mathbb R^{n})$, for every $\epsilon >0$ there is $M_{0}\in \mathbb N$ such that
$\| {P_{M}^{*}}^{\hspace{-.1cm}\perp}Tb_{1}\|_{\BMO_{b}(\mathbb R^{n})}<\epsilon $ and $2^{-Mn/2}(1+\| Tb_{1}\|_{\BMO_{b}(\mathbb R^{n})})<\epsilon $ for all $M>M_{0}$. 
We are going to prove that 
$$
|K(t,x)-K(t,x')|\lesssim \epsilon \frac{\ell(I_{x,x'})}{|t-x|_{\infty }^{n+1}} ,
$$
when either $|t-x|_{\infty }>2^{M+1}$, or $|t+x|_{\infty }>M2^{M+2}$, or $|t-x|_{\infty }<2^{-2M}$.

\vskip10pt
{\bf 1)} When $|t-x|_{\infty }>2^{M+1}$, all cubes $I\in \mathcal D$ in the sum satisfy $2\ell(I)=\ell(I_{p})\geq \ell(I_{t,x})\geq  |x-t|_{\infty }>2^{M+1}$ and so, 
$I\in {\mathcal D}_{M}^{c}$. 
We can then rewrite (\ref{K-K}) as 
\begin{align}\label{paracase1}
&\sum_{I\in {\mathcal D}} 
\langle {P_{M}^{*}}^{\hspace{-.1cm}\perp}Tb_{1}, \psi_{I}^{b_{2}}\rangle \frac{1}{\langle b_{1}\rangle_{I_{p}}}\frac{\chi_{I_{p}}(t)}{|I_{p}|}\tilde{\Psi}_{I}^{b_{2}}(x,x').
\end{align}
%

To be used in case 2), we note that this is the only instance when we use the actual inequality $|t-x|_{\infty }>2^{M+1}$. From now, we will only use that $I\in \mathcal D_{M}^{c}$.
By Lemma \ref{enoughortho}, we have
$$
\Big\langle \sum_{J\in {\mathcal D}}
\frac{\chi_{J_{p}}(t)}{|J_{p}|}
\tilde{\Psi}_{J}^{b_{2}}(x,x')\psi_{J}^{b_{2}},\tilde{\psi}_{I}^{b_{2}}\Big\rangle
=\frac{\chi_{I_{p}}(t)}{|I_{p}|}
\tilde{\Psi}_{I}^{b_{2}}(x,x')
$$
Then, \eqref{paracase1}, and thus $K(t,x)-K(t,x')$, can be rewritten as 
\begin{align}\label{lastone}
\nonumber
\sum_{I\in {\mathcal D}}
& \sum_{J\in {\mathcal D}}
\frac{1}{\langle b_{1}\rangle_{I_{p}}}
\langle {P_{M}^{*}}^{\hspace{-.1cm}\perp}Tb_{1},\psi_{I}^{b_{2}}\rangle 
\Big\langle \frac{\chi_{J_{p}}(t)}{|J_{p}|^{\frac{1}{2}}}\tilde{\Psi}_{J}^{b_{2}}(x,x')
|J_{p}|^{-\frac{1}{2}}\psi_{J}^{b_{2}},\tilde\psi_{I}^{b_{2}}\Big\rangle 
\\
&=\sum_{I\in {\mathcal D}}
\frac{\chi_{I_{p}}(t)}{|I_{p}|^{\frac{1}{2}}}\tilde{\Psi}_{I}^{b_{2}}(x,x')
\hspace{-.3cm}\sum_{J\in {\mathcal D}(I_{p})}
\hspace{-.1cm}\frac{1}{\langle b_{1}\rangle_{J_{p}}}
\langle {P_{M}^{*}}^{\hspace{-.1cm}\perp}Tb_{1},\psi_{J}^{b_{2}}\rangle 
\langle 
|I_{p}|^{-\frac{1}{2}}\psi_{I}^{b_{2}},\tilde\psi_{J}^{b_{2}}\rangle .
\end{align}
where we interchanged $I,J$ and condition $J\in {\mathcal D}(I_{p})$ is due to \eqref{psiortho2}. 
By Lemma \ref{psiortho},
$\| \psi_{I}^{b_{2}}\|_{L^{2}(\mathbb R^{n})}\lesssim B_{I,q_{2}}^{b_{2}}$. 
Since \eqref{lastone} coincides with the right hand side of 
\eqref{simplification},
by the proof of Lemma \ref{dualityBMOH1}, \eqref{lastone} is bounded by 
\begin{align*}
\sum_{I\in {\mathcal D}}&
 \frac{\chi_{I_{p}}(t)}{|I_{p}|^{\frac{1}{2}}}|\tilde{\Psi}_{J}^{b_{2}}(x,x')|
\sup_{I\in \mathcal I}\Big(\frac{
(B^{b_{2}}_{I,q_{2}})^{2}}{|I_{p}|}
\hspace{-.6cm}\sum_{\tiny \begin{array}{c}J\in {\mathcal D}(I_{p})
 \end{array}}
\hspace{-.3cm}
\frac{1}{|\langle b_{1}\rangle_{J_{p}}|^{2}}\frac{[b_{2}]_{J_{p},2}^{2}}{|\langle b_{2}\rangle_{J_{p}}|^{2}}
|\langle {P_{M}^{*}}^{\hspace{-.1cm}\perp}Tb_{1}, \psi_{J}^{b_{2}}\rangle|^{2}
\Big)^{\frac{1}{2}}
\\
&\leq \| {P_{M}^{*}}^{\hspace{-.1cm}\perp}Tb_{1}\|_{\BMO_{b}(\mathbb R^{n})}
\sum_{I\in {\mathcal D}}
\frac{\chi_{I_{p}}(t)}{|I|^{\frac{1}{2}}}|\tilde{\Psi}_{I}^{b_{2}}(x,x')|,
\end{align*}
where we used $|I|\leq |I_{p}|$. We now work to bound
\begin{align}\label{H1norm}
\sum_{\tiny \begin{array}{c}I\in {\mathcal D}\\I_{t,x}\subseteq I_{p}\end{array}}
\frac{\chi_{I_{p}}(t)}{|I|^{\frac{1}{2}}}|\tilde{\psi}_{I}^{b_{2}}(x)-\tilde{\psi}_{I}^{b_{2}}(x')|.
\end{align}
If the sum is non-zero then $I_{t,x}\subseteq I_{x,x'}$: if 
$I_{x,x'}\subsetneq I_{t,x}$, then all cubes $I$ such that $I_{t,x}\subseteq I$ satisfy $x,x'\in I'$ with 
$I'\in \child (I_{p})$; this implies 
$\tilde{\psi}_{I}^{b_{2}}(x)=\tilde{\psi}_{I}^{b_{2}}(x')$ and so, the sum in \eqref{H1norm} is zero. 
Moreover, if $\tilde{\Psi}_{I}^{b_{2}}(x,x')$ is non-zero only then $x,x'$ do not belong to the same child of $I_{p}$. Then
$$
|\tilde{\psi}_{I}^{b_{2}}(x)-\tilde{\psi}_{I}^{b_{2}}(x')|\leq  |\langle b_{2}\rangle_{I}| |I|^{\frac{1}{2}}\frac{1}{|I|\langle b_{2}\rangle_{I}|}
=|I|^{-\frac{1}{2}}.
$$

Now, we parametrize the cubes in the sum by their side length: 
let $(I^{k})_{k\in \mathbb N}$ be the family of dyadic cubes such that
$I_{t,x}\subset I^{k}$ with 
$\ell(I^{k})=2^{k}\ell(I_{t,x})$. 
Then, the sum in \eqref{H1norm} can be bounded by
$$
\sum_{k\geq 0}\frac{1}{|I^{k}|}
=\sum_{k\geq 0}\frac{1}{2^{kn}\ell(I_{t,x})^{n}}
\lesssim 
\frac{\ell(I_{t,x})}{\ell(I_{t,x})^{n+1}}\leq \frac{\ell(I_{x,x'})}{|t-x|_{\infty }^{n+1}}.
$$
With all this, we obtain by the choice of $M$,
$$
|K(t,x)-K(t',x)|
\lesssim \| {P_{M}^{*}}^{\hspace{-.1cm}\perp}Tb_{1}\|_{\BMO_{b}(\mathbb R^{n})}
\frac{\ell(I_{x,x'})}{|t-x|_{\infty }^{n+1}}
\leq \epsilon \frac{\ell(I_{x,x'})}{|t-x|_{\infty }^{n+1}}.
$$

{\bf 2)} We work the case $|t+x|_{\infty }>M2^{M+2}$. Since
every cube $I$ in the sum \eqref{K-K} satisfies $I_{t,x}\subseteq I_{p}$, we have $(t+x)/2\in I_{p}$. Then 
$|c(I_{p})-(x+t)/2|_{\infty }<\ell(I_{p})/2$ and so,
$
|c(I_{p})|_{\infty }\geq |t+x|_{\infty }/2-\ell(I_{p})/2 
$.
Now, if $\ell(I_{p})>2^{M+1}$, we get as before that $I\in {\mathcal D}_{M}^{c}$.
If instead $\ell(I)\leq 2^{M+1}$, 
\begin{align*}
\rdist(I,{\mathbb B}_{2^{M}})&\geq \rdist(I_{p},{\mathbb B}_{2^{M}})=\frac{\diam(I_{p}\cup {\mathbb B}_{2^{M}})}{2^{M}}
\\
&
\gtrsim \frac{|c(I_{p})|_{\infty }+2^{M-1}+\ell(I_{p})/2}{2^{M}}
\geq 
\frac{|t+x|_{\infty }}{2^{M+1}}+\frac{1}{2}>M,
\end{align*}
by the property of $|t+x|_{\infty }$.  Therefore, we also obtain $I\in {\mathcal D}_{M}^{c}$ and with this, we conclude as in the previous case that
$$
|K(t,x)-K(t',x)|
\lesssim \| {P_{M}^{*}}^{\hspace{-.1cm}\perp}Tb_{1}\|_{\BMO_{b}(\mathbb R^{n})}
\frac{\ell(I_{x,x'})}{|t-x|_{\infty }^{n+1}}
\leq \epsilon \frac{\ell(I_{x,x'})}{|t-x|_{\infty }^{n+1}}.
$$

{\bf 3)} The last case, $|t-x|_{\infty }<2^{-2M}$, is more involved. The cubes in the sum such that 
$\ell(I)<2^{-M}$ or $\ell(I)>2^{M}$ satisfy $I\in {\mathcal D}_{M}^{c}$ and so, they may be taken care of as in the two previous cases.

However, those cubes such that $2^{-M}\leq \ell(I)\leq 2^{M}$ may belong to ${\mathcal D}_{M}$ and the previous argument can not be used. Instead, we reason as follows. The terms under consideration in \eqref{K-K}
are given by those cubes $I\in {\mathcal D}$ such that $I_{t,x}\subseteq  I_{p}$ and  $2^{-M}\leq \ell(I)\leq 2^{M}$. 
From the work in case 1), we know that $I_{t,x}\subseteq I_{x,x'}$. 
Therefore, these cubes can be parametrized by their side length as $\ell(I^{k})=2^{k}\ell(I_{t,x})$ with $k_{0}\leq k \leq k_{1}$
where $k_{0}=\max(-M-\log\ell(I_{t,x}),0)$ and $k_{1}=M-\log\ell(I_{t,x})$.
Then,
$$
K(t,x)-K(t,x')
=\hspace{-.3cm}\sum_{k_{0}\leq k \leq k_{1}}\hspace{-.1cm}
\langle Tb_{1}, \psi_{I^{k}}^{b_{2}}\rangle \frac{\chi_{I_{p}^{k}}(t)}{\langle b_{1}\rangle_{I_{p}^{k}}|I_{p}^{k}|}(\tilde{\psi}_{I^{k}}^{b_{2}}(x)-\tilde{\psi}_{I^{k}}^{b_{2}}(x')).
$$



As in the first case, we bound the modulus of previous expression by
\begin{align*}
\| Tb_{1}\|_{\BMO_{b}(\mathbb R^{n})}
\hspace{-.1cm}\sum_{k_{0}\leq k \leq k_{1}}
\frac{\chi_{I_{p}^{k}}(t)}{|I^{k}|^{\frac{1}{2}}}|\tilde{\psi}_{I^{k}}^{b_{2}}(x)-\tilde{\psi}_{I^{k}}^{b_{2}}(x')|
\end{align*}

By the same reasoning, we bound the last factor by a constant times
\begin{equation}\label{sumK-K}
\sum_{k_{0}\leq k\leq k_{1}}\frac{1}{2^{kn}}\frac{1}{\ell(I_{t,x})^{n}}.
\end{equation}
We distinguish two cases depending whether $|t-x|_{\infty }\leq 2^{-M/2}\ell(I_{t,x})$ or  $|t-x|_{\infty }> 2^{-M/2}\ell(I_{t,x})$. In the first case, 
\eqref{sumK-K} is bounded by
\begin{align*}
\sum_{0\leq k}\frac{1}{2^{kn}}\frac{1}{\ell(I_{t,x})^{n}}
&\lesssim \frac{\ell(I_{t,x})}{\ell(I_{t,x})^{n+1}}
\leq 2^{-\frac{M}{2}(n+1)}\frac{\ell(I_{x,x'})}{|t-x|_{\infty }^{n+1}}.
\end{align*}

In the second case, we have that $\ell(I_{t,x})<2^{M/2}|t-x|_{\infty }<2^{-3M/2}$  and so, 
$k_{0}\geq -M-\log \ell(I_{t,x})\geq M/2$. Then, \eqref{sumK-K} is bounded by 
$$
\sum_{k\geq \frac{M}{2}}\frac{1}{2^{kn}}\frac{1}{\ell(I_{t,x})^{n}}
\lesssim \frac{1}{2^{\frac{M}{2}n}}\frac{\ell(I_{t,x})}{\ell(I_{t,x})^{n+1}}
\leq \frac{1}{2^{\frac{M}{2}n}}\frac{\ell(I_{x,x'})}{|t-x|_{\infty }^{n+1}}.
$$

Therefore, in both cases we have by the choice of $M$ again, 
$$
|K(t,x)-K(t,x')|
\lesssim  
\| Tb_{1}\|_{\BMO_{b}(\mathbb R^{n})}
\frac{1}{2^{\frac{M}{2}n}}\frac{\ell(I_{x,x'})}{|t-x|_{\infty }^{n+1}}
\leq \epsilon \frac{\ell(I_{x,x'})}{|t-x|_{\infty }^{n+1}}.
$$

Similar reasoning applies to $K(t,x)-K(t',x)$ finishing the proof.

\section{Necessity of the hypotheses}\label{necessity section}
In this last section, we prove necessity of the three hypotheses of Theorem \ref{Mainresult2}.
Since in \cite{V}, we proved that Calder\'on-Zygmund operators compact on $L^{p}(\mathbb R^{n})$ have compact Calder\'on-Zygmund kernels, we focus on the other two hypotheses:
the weak compactness condition and the membership of $Tb_{1}$ and $T^{*}b_{2}$ to $\CMO_{b}(\mathbb R^{n})$. 

\subsection{The weak compactness condition}

\begin{proposition}\label{neceweakcompactnessfinal}
Let $1<p<\infty $ and $T$ be bounded on $L^{p}(\mathbb R^{n})$.
Let $1\leq q_{i}\leq \infty $ and 
$b_{i}$ be two locally integrable functions.
If either $p\leq q_{1}$ and $p'\leq q_{2}$, or $b_{1}, b_{2}$ are accretive
then for every $M\in \mathbb N$,
$Q\in \mathcal D$, 
\begin{align*}
|\langle &T_{b}\chi_Q,\chi_{Q}\rangle |
\lesssim 
|Q|[b_{1}]_{Q,q_{1}}[b_{2}]_{Q,q_{2}}
\Big[ \| (P_{M}^{*})^{\perp} T\|_{p, p}
\\
&
+\| P_{M}^{*}T\|_{p,p}\,\, \chi_{[0,1]}\Big(\frac{\ell(Q)}{2^{M}}\Big) \,
\Big(1+\frac{2^{-M}}{\ell(Q)}\Big)^{-\frac{n}{p}}
\chi_{[0,1]}\Big(\frac{\rdist (Q,\mathbb B_{2^{M}})}{M}\Big)\Big].
\end{align*}
\end{proposition}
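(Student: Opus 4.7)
The plan is to write $T = P_M^* T + (P_M^*)^\perp T$, with $P_M$ the lagom projection attached to the adapted Haar system $(\psi_I^{b_2})_{I\in \mathcal D}$ of Definition \ref{defpsi}, and bound the two resulting pairings with $(b_1\chi_Q, b_2\chi_Q)$ separately. For the $(P_M^*)^\perp T$ piece, H\"older's inequality together with $\|b_i\chi_Q\|_r \le |Q|^{1/r}[b_i]_{Q,q_i}$ (valid for $r=p$ under $p\le q_1$ and for $r=p'$ under $p'\le q_2$, with the analogous estimate in the accretive case) yields $\|(P_M^*)^\perp T\|_{p,p}|Q|[b_1]_{Q,q_1}[b_2]_{Q,q_2}$, the first summand in the claim.

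For the $P_M^* T$ piece I would pass to the dual pairing $\langle T(b_1\chi_Q), P_M(b_2\chi_Q)\rangle$ and distinguish three regimes. If $\ell(Q) > 2^M$, then for every $I\in \mathcal D_M$ the dyadic cube $I_p$ has side $\le 2^{M+1} \le \ell(Q)$, so either $I_p\subset Q$---in which case $\langle b_2\chi_Q, \tilde\psi_I^{b_2}\rangle = \int b_2\tilde\psi_I^{b_2} = 0$ by Lemma \ref{psiortho}---or $I_p\cap Q = \emptyset$. Either way the wavelet coefficient vanishes, so $P_M(b_2\chi_Q) = 0$ and the pairing is identically zero, matching the indicator $\chi_{[0,1]}(\ell(Q)/2^M)$. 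If instead $\rdist(Q,\mathbb B_{2^M})$ is large, $Q$ becomes disjoint from the spatial support $R_M := \bigcup_{I\in \mathcal D_M} I_p$ of $\mathrm{range}(P_M^*)$, so again the pairing vanishes, matching $\chi_{[0,1]}(\rdist(Q,\mathbb B_{2^M})/M)$.

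In the remaining lagom regime direct H\"older yields $\|P_M^* T\|_{p,p}|Q|[b_1]_{Q,q_1}[b_2]_{Q,q_2}$, which already matches the claim when $\ell(Q) \ge 2^{-M}$ since $(1+2^{-M}/\ell(Q))^{-n/p} \gtrsim 1$ there. When $\ell(Q)<2^{-M}$ I would exploit that each $\tilde\psi_I^{b_2}$ with $I\in \mathcal D_M$ is a two-level step function on $I_p$ (one value on $I$, another on $I_p\setminus I$), hence constant on every dyadic sub-cube of side $\le \ell(I)$; summing, $P_M^* T(b_1\chi_Q)$ agrees with a single constant $c$ on the dyadic cube $E$ of side $2^{-M}$ containing $Q$. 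The identity $|c|=|E|^{-1/p}\|P_M^* T(b_1\chi_Q)\chi_E\|_p \le 2^{Mn/p}\|P_M^* T\|_{p,p}|Q|^{1/p}[b_1]_{Q,q_1}$ together with $\bigl|\int_Q cb_2\bigr| \le |c||Q|[b_2]_{Q,q_2}$ produces the improved bound $\|P_M^* T\|_{p,p}|Q|[b_1]_{Q,q_1}[b_2]_{Q,q_2}(\ell(Q)2^M)^{n/p}$, and $(\ell(Q)2^M)^{n/p}$ is comparable to $(1+2^{-M}/\ell(Q))^{-n/p}$ in this range.

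The hardest point will be the geometric bookkeeping in the far regime: because $I_p$ for $I\in \mathcal D_M$ can have side as large as $2^{M+1}$, the set $R_M$ extends slightly past $\mathbb B_{M 2^M}$, so strict disjointness of $Q$ and $R_M$ only really follows from $\rdist(Q,\mathbb B_{2^M}) > CM$ for some absolute $C>1$ rather than from $>M$. The $\lesssim$ in the conclusion absorbs this constant, which is the reason the indicator can still be written with plain $M$ inside.
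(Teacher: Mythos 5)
Your overall architecture coincides with the paper's: split off $P_{M}^{*}T$, dispose of the $(P_{M}^{*})^{\perp}T$ piece by H\"older, and run a case analysis on $\ell(Q)$ and $\rdist(Q,\mathbb B_{2^{M}})$ for the $P_{M}^{*}T$ piece. Your treatment of $\ell(Q)>2^{M}$ is equivalent to the paper's, and your argument for $\ell(Q)<2^{-M}$ --- that $P_{M}^{*}T(b_{1}\chi_{Q})$ is a single constant $c$ on the dyadic cube $E\supset Q$ of side $2^{-M}$, so the pairing equals $c\int_{Q}b_{2}$ with $|c|\leq |E|^{-1/p}\|P_{M}^{*}T\|_{p,p}\|b_{1}\chi_{Q}\|_{p}$ --- is cleaner than the paper's, which instead telescopes $P_{M}(b_{2}\chi_{Q})=E_{M}^{b_{2}}(b_{2}\chi_{Q})-E_{-M}^{b_{2}}(b_{2}\chi_{Q})$ and must control the ratio $[b_{2}]_{J^{1},p'}/|\langle b_{2}\rangle_{J^{1}}|$ on the cube $J^{1}$ of side $2^{-M}$.

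The genuine gap is in the far regime. You derive the vanishing of the $P_{M}^{*}T$ term from disjointness of $Q$ and $R_{M}=\bigcup_{I\in\mathcal D_{M}}I_{p}$, concede that this only follows from $\rdist(Q,\mathbb B_{2^{M}})>CM$ with some $C>1$, and then assert that the $\lesssim$ absorbs the constant. It cannot: for $M<\rdist(Q,\mathbb B_{2^{M}})\leq CM$ the right-hand side of the proposition carries the factor $\chi_{[0,1]}(\rdist(Q,\mathbb B_{2^{M}})/M)=0$, i.e.\ the claim is that the $\|P_{M}^{*}T\|_{p,p}$ contribution vanishes identically there, and no implied constant converts a nonzero bound into zero. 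The paper avoids the loss by a containment argument rather than a disjointness argument: by Lemma \ref{psiortho} the coefficient $\langle b_{2}\chi_{Q},\tilde\psi_{J}^{b_{2}}\rangle$ vanishes both when the wavelet's support is disjoint from $Q$ and when it is contained in $Q$ (by the cancellation $\int b_{2}\tilde\psi_{J}^{b_{2}}=0$), so $P_{M}(b_{2}\chi_{Q})$ is a sum over $J\in\mathcal D_{M}$ with $Q\subsetneq J$; for such $J$ one has $\rdist(Q,\mathbb B_{2^{M}})\leq\rdist(J,\mathbb B_{2^{M}})\leq M$ by monotonicity of $\ell(\langle\, \cdot\,,\mathbb B_{2^{M}}\rangle)$ under inclusion (both side lengths being at most $2^{M}$, the normalization is $2^{M}$ in both quotients), which directly contradicts $\rdist(Q,\mathbb B_{2^{M}})>M$ with no constant at all. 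Replace your support-disjointness step by this containment-plus-monotonicity step; the rest of your proof then stands.
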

\begin{corollary}
Let $1<p<\infty $ and $T$ compact on $L^{p}(\mathbb R^{n})$.
Let $q_{i}$ and 
$b_{i}$ as before.
Then $T$ satisfies the weak compactness condition
\end{corollary}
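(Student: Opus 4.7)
The plan is to extract the weak compactness condition directly from Proposition \ref{neceweakcompactnessfinal} by funneling the residual operator norm through the compactness of $T$. Applying that proposition with $b_1^\alpha, b_2^\beta$ in place of $b_1, b_2$ (where $\alpha, \beta \in \{0,1\}$, with the $\alpha=0$ or $\beta=0$ instances reducing to $b_i \equiv 1$), I obtain for every $Q \in \mathcal D$ and $M \in \mathbb N$
\begin{equation*}
|\langle T(b_1^\alpha \chi_Q), b_2^\beta \chi_Q\rangle| \lesssim |Q|\,[b_1^\alpha]_{Q,q_1}[b_2^\beta]_{Q,q_2}\bigl(F_W(Q;M) + \|(P_M^*)^\perp T\|_{p, p}\bigr),
\end{equation*}
where
\begin{equation*}
F_W(Q;M) := \|P_M^* T\|_{p, p}\,\chi_{[0,1]}\!\Big(\tfrac{\ell(Q)}{2^M}\Big)\Big(1+\tfrac{2^{-M}}{\ell(Q)}\Big)^{-n/p}\chi_{[0,1]}\!\Big(\tfrac{\rdist(Q,\mathbb B_{2^M})}{M}\Big).
\end{equation*}

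I would then verify that $F_W(\cdot;M)$ fits the template of Definition \ref{LSDF}: for each fixed $M$ its three factors play the roles of $L(\ell(Q))$ (vanishing once $\ell(Q) > 2^M$), $S(\ell(Q))$ (vanishing as $\ell(Q) \to 0$), and $D(\rdist(Q,\mathbb B))$ (vanishing once $\rdist(Q,\mathbb B_{2^M}) > M$); each is bounded uniformly in $Q$, and the normalising constant $\|P_M^* T\|_{p,p} \leq \|T\|_{p,p}$ is harmless.

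Given $\epsilon > 0$, the remaining task is to pick $M_0$ so that $\|(P_M^*)^\perp T\|_{p,p} < \epsilon$ for all $M > M_0$. This is where compactness of $T$ on $L^p(\mathbb R^n)$ enters decisively: since $T(B_{L^p})$ is relatively compact in $L^p(\mathbb R^n)$ and $P_M^* \to \mathrm{Id}$ strongly on $L^p(\mathbb R^n)$, the identity $(P_M^*)^\perp T = T - P_M^* T$ converges to zero in operator norm. The strong convergence itself follows from Lemma \ref{densityinL2dual}, which furnishes pointwise-a.e.\ convergence $P_M^* f \to f$ on the dense subclass of integrable, compactly supported, $b_2$-mean-zero functions, combined with uniform $L^p$-boundedness of $(P_M^*)_M$: this is classical under accretivity of $b_2$, where the adapted Haar system is an unconditional basis, and it follows from the integrability hypotheses in the regime $p \leq q_1$, $p' \leq q_2$ where the projection is controlled by the Hardy--Littlewood maximal function.

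The main obstacle is precisely this uniform $L^p$-boundedness of the adapted projections $\{P_M^*\}$ in both of the stated alternative regimes; once that is secured, uniform boundedness plus density plus precompactness of $T(B_{L^p})$ delivers the norm convergence $(P_M^*)^\perp T \to 0$, and substituting this back into the estimate from Proposition \ref{neceweakcompactnessfinal} yields the weak compactness condition with the function $F_W$ defined above.
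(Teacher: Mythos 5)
Your proposal is correct and follows essentially the same route as the paper, which presents the corollary as an immediate consequence of Proposition \ref{neceweakcompactnessfinal}: the second bracketed term is read off as $F_{W}(Q;M)$ (vanishing for $\ell(Q)>2^{M}$, decaying like $(2^{M}\ell(Q))^{n/p}$ as $\ell(Q)\to 0$, and vanishing for $\rdist(Q,\mathbb B_{2^{M}})>M$), while compactness of $T$ forces $\| (P_{M}^{*})^{\perp}T\|_{p,p}\to 0$, supplying the $\epsilon$ in Definition \ref{weakcompactnessdef}. The one point you rightly flag — uniform $L^{p}$-boundedness of the adapted projections $P_{M}^{*}$, needed both for the norm convergence $(P_{M}^{*})^{\perp}T\to 0$ and for the boundedness of $F_{W}$ — is likewise left implicit in the paper, so your treatment is no less complete than the original.
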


%
\begin{proof}
We start with the decomposition
$$
|\langle T_{b}\chi_Q,\chi_{Q}\rangle |
\leq |\langle P_{M}^{\perp}T_{b}\chi_Q,\chi_{Q}\rangle |
+|\langle P_{M}T_{b}\chi_Q,\chi_{Q}\rangle |.
$$
Since 
$
\langle b_{2}f,\tilde{\psi}_{I}^{b_{2}}\rangle \psi_{I}^{b_{2}}
=b_{2}
\langle f,\psi_{I}^{b_{2}}\rangle \tilde{\psi}_{I}^{b_{2}}
$ 
for all $I\in {\mathcal D}$, we have
$
P_{M}(b_{2}f)=b_{2}P_{M}^{*}f
$.
With this and 
the hypothesis on $q_{i}$ or $b_{i}$, we get
\begin{align*}
|\langle &P_{M}^{\perp }T_{b}\chi_Q,\chi_{Q}\rangle|
=|\langle (P_{M}^{*})^{\perp }T(b_{1}\chi_Q),b_{2}\chi_{Q}\rangle|
\\
&
\leq \| (P_{M}^{*})^{\perp }T\|_{p,p}
\|b_{1}\|_{L^{p}(Q)}\|b_{2}\|_{L^{p'}(Q)}
= \| (P_{M}^{*})^{\perp }T\|_{p,p}
|Q|[b_{1}]_{Q,p}[b_{2}]_{Q,p'}
\\
&
\lesssim \| (P_{M}^{*})^{\perp }T\|_{p,p}
|Q|[b_{1}]_{Q,q_{1}}[b_{2}]_{Q,q_{2}}.
\end{align*}

We deal now with the second term. If $Q\in {\cal D}_{M}$, we have as before
$
|\langle P_{M}T_{b}\chi_Q,\chi_{Q}\rangle|
\lesssim  
\| P_{M}^{*}T\|_{p,p}
|Q|[b_{1}]_{Q,q_{1}}[b_{2}]_{Q,q_{2}}
$,
which is compatible with the statement 
since $2^{-M}\leq \ell(Q)\leq 2^{M}$ and 
$\rdist(Q,\mathbb B_{2^{M}})<M$.

If $Q\notin {\cal D}_{M}$, we proceed in a different way.
By Lemma \ref{densityinL2}, 
$$
b_{2}\chi_{Q}=\sum_{\tiny \begin{array}{c}J\in {\cal D}\\ Q\subsetneq J\end{array}}
\langle b_{2}\chi_{Q},\tilde{\psi}_{J}^{b_{2}}\rangle \psi_{J}^{b_{2}}
$$
with a.e. pointwise convergence. The constraint $Q\subsetneq J$ is due to 
$\langle b_{1}\chi_{Q},\tilde{\psi}_{J}^{2}\rangle =0$ for $J\cap Q=\emptyset $ 
while, by Lemma \ref{psiortho}, 
$\langle b_{2}\chi_{Q},\tilde{\psi}_{J}^{b_{1}}\rangle =\langle b_{2},\tilde{\psi}_{J}^{b_{2}}\rangle =0$ for 
$J\subseteq Q$. Therefore, 
\begin{align}\label{telescopePM}
P_{M}(b_{2}\chi_{Q})=\sum_{\tiny \begin{array}{c}J\in {\cal D_{M}}\\ Q\subsetneq J\end{array}}
\langle b_{2}\chi_{Q},\tilde{\psi}_{J}^{b_{2}}\rangle \psi_{J}^{b_{2}},
\end{align}
where the sum is finite. 
With 
$
P_{M}(b_{2}f)=b_{2}P_{M}^{*}f
$
and $P_{M}^{2}=P_{M}$, we have
\begin{align}\label{necweak2}
\nonumber
\langle P_{M}T_{b}&\chi_Q,\chi_{Q}\rangle
=\langle P_{M}^{*}T(b_{1}\chi_Q),b_{2}P_{M}^{*}\chi_{Q}\rangle
\\
&=\langle P_{M}^{*}T(b_{1}\chi_Q),P_{M}(b_{2}\chi_{Q})\rangle
=\hspace{-.5cm}\sum_{\tiny \begin{array}{c}J\in {\cal D_{M}}\\ Q\subsetneq J\end{array}}
\hspace{-.3cm}\langle P_{M}^{*}T(b_{1}\chi_{Q}),\psi_J^{b_{2}}\rangle
\langle b_{2}\chi_{Q}, \tilde{\psi}_{J}^{b_{2}}\rangle .
\end{align}

Now, we separate into three cases:
$\ell(Q)>2^{M}$; $\ell(Q)<2^{-M}$; and $2^{-M}<\ell(Q)<2^{M}$ with 
$\rdist (Q, \mathbb B_{2^{M}})>M$. 

{\bf 1)} When $\ell(Q)>2^{M}$, all cubes $J$ in the sum satisfy $\ell(J)\geq \ell(Q)>2^{M}$, which is contradictory with $J\in \mathcal D_{M}$. Then the sum in \eqref{necweak2} is empty and  $\langle P_{M}T_{b}\chi_Q,\chi_{Q}\rangle
=0$.

{\bf 2)} When $\ell(Q)<2^{-M}$, since $J\in \mathcal D_{M}$ we have that $\ell(Q)<2^{-M}\leq \ell(J)$. 
By telescoping, the sum in \eqref{telescopePM} can be rewritten as 
$
P_{M}(b_{2}\chi_{Q})=\sum_{-M< k\leq M}
\Delta_{k}^{b_{2}}(b_{2}\chi_{Q})
=E_{M}^{b_{2}}(b_{2}\chi_{Q})-E_{-M}^{b_{2}}(b_{2}\chi_{Q}).
$
Let $J^{0},J^{1}\in \mathcal D_{M}$ such that with $Q\subsetneq J^{i}$ and $\ell(J^{i})=2^{(-1)^{i}M}$. Then
\begin{align*}
\| E_{-M}^{b_{2}}(b_{2}\chi_{Q})\|_{L^{p'}(\mathbb R^{n})}
&=\frac{|\langle b_{2}\chi_{Q}\rangle_{J^{1}}|}{|\langle b_{2}\rangle_{J^{1}}|}\| b_{2}\chi_{J^{1}}\|_{L^{p'}(\mathbb R^{n})}
\\
&=\frac{|Q|}{|J^{1}|}|\langle b_{2}\rangle_{Q}|\frac{|J^{1}|^{\frac{1}{p'}}[b_{2}]_{J^{1},p'}}{|\langle b_{2}\rangle_{J^{1}}|}
\lesssim 2^{\frac{M}{p}}|Q||\langle b_{2}\rangle_{Q}|,
\end{align*}
using 
the hypothesis on $q_{i}$ or $b_{i}$. Similarly, we get a smaller estimate for $\| E_{M}^{b_{2}}(b_{2}\chi_{Q})\|_{L^{p'}(\mathbb R^{n})}$. Whence, from the first equality in \eqref{necweak2}, we have
\begin{align*}
|\langle P_{M}T_{b}\chi_Q,\chi_{Q}\rangle |
&
\leq \| P_{M}^{*}T\|_{p,p}\| b_{1}\|_{L^{p}(Q)} \|P_{M}(b_{2}\chi_{Q})\|_{L^{p'}(\mathbb R^{n})}
\\
&\lesssim \| P_{M}^{*}T\|_{p,p}|Q|^{\frac{1}{p}}[b_{1}]_{Q,p}|\langle b_{2}\rangle_{Q}| \, 2^{\frac{M}{p}}|Q|
\\
&= \| P_{M}^{*}T\|_{p,p}|Q|[b_{1}]_{Q,q_{1}}[b_{2}]_{Q,q_{2}} \big(2^{M}\ell(Q)\big)^{\frac{n}{p}},
\end{align*}
with the hypothesis on $q_{i}$ or $b_{i}$.
This ends the second case.

{\bf 3)} We consider the case $2^{-M}<\ell(Q)<2^{M}$ and $\rdist (Q,\mathbb B_{2^{M}})>M$.  Since 
$2^{-M}\leq \ell(J)\leq 2^{M}$ and $Q\subset J$, we have that 
\begin{align*}
\rdist (Q,\mathbb B_{2^{M}})&=\frac{\ell(\langle Q, \mathbb B_{2^{M}}\rangle )}{2^{M}}
\leq \frac{\ell(\langle J, \mathbb B_{2^{M}}\rangle )}{2^{M}}=\rdist (J,\mathbb B_{2^{M}})
\end{align*}
Then, $\rdist (J,\mathbb B_{2^{M}})>M$, which is contradictory with  $J\in \mathcal D_{M}$. Therefore,  
 the sum in \eqref{necweak2} is again empty and $\langle P_{M}T_{b}\chi_Q,\chi_{Q}\rangle
=0$. 
\end{proof}

\subsection{Membership in $\CMO_{b}(\mathbb R^{n})$}
\begin{proposition}\label{necessity2}
Let $T$ be a linear operator with a standard Calder\'on-Zygmund kernel
that extends compactly on $L^{p}(\mathbb R^{n})$ 
for some $1<p<\infty $. Then for  
$b_{1}, b_{2}$ locally integrable functions compatible with $T$
we have 
$Tb_{1},T^{*}b_{2}\in \CMO_{b}(\mathbb R^{n})$.
\end{proposition}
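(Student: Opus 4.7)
The plan is to invoke the wavelet characterization of Lemma \ref{lem:cmochar}, which reduces $Tb_1\in\CMO_b(\mathbb R^n)$ to (a) $\BMO_b$ membership of $Tb_1$ and (b) the vanishing of the wavelet-coefficient tail over $\mathcal D_M(I)^c$ as $M\to\infty$, uniformly in $I$. We treat only $Tb_1$, since $T^*b_2$ is entirely symmetric. Through the $\BMO_b$--$H^1_b$ duality of Lemma \ref{dualityBMOH1}, the wavelet coefficient $\langle Tb_1,\tilde\psi_J^{b_2}\rangle$ can be identified (up to the factor $\langle b_2\rangle_J$) with $\langle T_b 1, h_J^{b_2}\rangle=\mathcal L_b(h_J^{b_2})$, which is well-defined by Lemma \ref{definecmo} since $h_J^{b_2}$ has mean zero with respect to $b_2$. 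Part (a) then follows from $L^p$-boundedness of $T$ (a consequence of compactness) applied to the truncations $\chi_{2^kJ_p}b_1$, together with the compatibility bound \eqref{b-averageandbound0}.

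For part (b), I would split $J\in\mathcal D_M(I)^c$ according to the three defining regimes: $\ell(J)<2^{-M}$, $\ell(J)>2^M$, or $\rdist(J,\mathbb B_{2^M})>M$. On each regime, using Lemma \ref{definecmo} I would approximate $\mathcal L_b(h_J^{b_2})$ by $\langle T_b\chi_{2^kJ_p}, h_J^{b_2}\rangle$ up to an error of order $2^{-k\delta}$ times factors involving $F_K$, and then estimate both pieces. The local piece is controlled by compactness of $T$ on $L^p$: via the wavelet representation of Corollary \ref{repofproj}, this translates into smallness of an off-diagonal operator tail, whose matrix coefficients are exactly the wavelet coefficients under study. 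The error piece vanishes by the decay properties of the auxiliary functions $L,S,D$ in Definition \ref{LSDF}, which make $F_K$ small in each extreme regime for $M$ large. Summation uniformly in $I$ is then handled by Carleson's embedding (Lemma \ref{Carleson}) together with the compatibility limit \eqref{b-averageandbound0limit}.

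The main obstacle is extracting the required uniformity in $I$ while dealing with $b_1\notin L^p(\mathbb R^n)$ in general. Compactness of $T$ only gives norm-smallness of operators on bounded $L^p$ subsets, so we must first truncate $b_1$ on cubes of the form $2^kJ_p$ before applying compactness, and then use the limiting procedure of Lemma \ref{definecmo} to control the truncation error. The interplay between the truncation scale $k$ and the geometric parameters of $(I,J)$ must be arranged so that, for every $\epsilon>0$, there is a single $M_0$ valid for all outer cubes $I$ and all $J\in\mathcal D_M(I)^c$ simultaneously. The compatibility assumption \eqref{b-averageandbound0limit} in Definition \ref{accretive}, combined with the square-function estimate of Lemma \ref{Planche}, provides precisely the summability needed to achieve this uniform control and thereby close the argument.
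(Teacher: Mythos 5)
Your overall strategy (reduce to the characterization of $\CMO_{b}$, use $L^{p}$-boundedness plus Lemma \ref{definecmo} for membership in $\BMO_{b}$, and use $\| {P_{M}^{*}}^{\perp}T\|_{p,p}\to 0$ for the vanishing tail) is the right one and matches the paper in outline, but the execution of part (b) has a genuine gap. The condition in Lemma \ref{lem:cmochar}(ii) is a \emph{square-function} bound, $\frac{1}{|I|}\sum_{J\in\mathcal D_{M}(I)^{c}}|\alpha_{I,J}|^{2}|\langle Tb_{1},\tilde\psi_{J}^{b_{2}}\rangle|^{2}<\epsilon^{2}$, and it cannot be obtained by estimating each coefficient separately and then summing: the natural per-coefficient bounds (from Lemma \ref{definecmo} or from $\|{P_{M}^{*}}^{\perp}T\|_{p,p}\|b_{1}\chi_{2^{k}J_{p}}\|_{p}\|\psi_{J}^{b_{2}}\|_{p'}$) are of size $|J|^{1/2}$ times bounded factors, and $\sum_{J\in\mathcal D(I)}|J|$ diverges. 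The tools you invoke for the summation do not repair this: Lemma \ref{Planche} and \eqref{b-averageandbound0limit} control coefficients of an $L^{2}$ function, not of $Tb_{1}\in\BMO_{b}$, and Lemma \ref{Carleson} presupposes a Carleson packing condition on the coefficients of $Tb_{1}$, which is essentially what you are trying to prove. Moreover, because your truncation scale $2^{k}J_{p}$ varies with $J$, the compactness input $\|{P_{M}^{*}}^{\perp}T\|_{p,p}$ cannot be applied once to a single function; it degenerates into the per-coefficient bounds just discussed.

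The paper avoids this by dualizing: it fixes an $H^{1}_{b}$-atom $f$ supported on a single $I_{p}$, performs one truncation $b_{1}\chi_{2^{k}I_{p}}$ adapted to that atom, and bounds $|\langle {P_{M}^{*}}^{\perp}T(b_{1}\chi_{2^{k}I_{p}}),b_{2}f\rangle|$ by a single application of the operator norm $\|{P_{M}^{*}}^{\perp}T\|_{p,p}$ — this is how the square-sum is captured without term-by-term summation. Your proposal also omits what is in fact the longest part of the paper's argument: the cross term $\sum_{k'\geq k}\langle P_{M}^{*}T(b_{1}\Psi_{k'}),P_{M}^{\perp}(b_{2}f)\rangle$, i.e.\ the interaction of the lagom projection with the far truncations of $b_{1}$, which the paper controls by writing it as an integral against the composite kernel ${\mathcal K}(t,x)=\sum_{J\in\mathcal D_{M}(I)}\int\psi_{J}^{b_{2}}(z)K(t,z)\,dz\,\tilde\psi_{J}^{b_{2}}(x)$ and proving the decay $|{\mathcal K}(t,x)|\lesssim 2^{-k'(n+\delta)}|I_{p}|^{-1}$. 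Without the duality step and without an estimate of this type, the argument does not close.
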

\proof
Since $T$ is bounded on $L^{p}(\mathbb R^{n})$, by the classical theory, $T$ is bounded on 
$L^{p}(\mathbb R^{n})$ for all $1<p<\infty $. Thus, by interpolation, $T$ turns out to be compact on all $L^{p}(\mathbb R^{n})$ spaces.

To prove membership in $\BMO_{b}(\mathbb R^{n})$, we show first
that $\mathcal L_{b}$ defined in  Lemma \ref{definecmo} is a bounded linear functional on $H^{1}_{b}(\mathbb R^{n})$. Since linearity is trivial, we prove 
its continuity on $H^{1}_{b}(\mathbb R^{n})$. 

By standard arguments, it is enough to prove the result for $p'$-atoms.
Let $I\in \mathcal D$ 
be fixed and $f$ be an 
atom in 
$H_{b}^{1}(\mathbb R^{n})$ supported on
$I_{p}$ with mean zero with respect $b_{2}$ and
 $\| fb_{2}\|_{L^{p'}(\mathbb R^{n})}\lesssim B_{I,p'}^{b_{2}}
 |I|^{-1/p}$. 
 
Let $\Psi_{k}=\chi_{2^{k+1}I_{p}}-\chi_{2^{k}I_{p}}$. 
For $k\in \mathbb N$, $k>1$,
we have
\begin{align*}
|{\mathcal L}_{b}(f)|&\leq |\langle T_{b}\chi_{I_{p}},f\rangle |
+\sum_{k'=0}^{k-1}|\langle T_{b}\Psi_{k'} ,f\rangle |
+|{\mathcal L}_{b}(f)-\langle T_{b}\chi_{2^{k}I_{p}},f\rangle |.
\end{align*}


Using boundedness of $T$ on $L^{p}(\mathbb R^{n})$
we estimate the first term by
\begin{align*}
\|T\|_{p,p} \| b_{1}\chi_{I_{p}} \|_{L^{p}(\mathbb R^{n})}
\| b_{2}f\|_{L^{p'}(\mathbb R^{n})}
&\lesssim [b_{1}]_{I,p}
|I_{p}|^{\frac{1}{p}}
B_{I,p'}^{b_{2}}
|I|^{-\frac{1}{p}}
=[b_{1}]_{I,p}
B_{I,p'}^{b_{2}}.
\end{align*}
Since $T$ is compact, its kernel $K$ is a compact Calder\'on-Zygmund kernel
with parameter $\delta$. Then 
from the proof of Lemma \ref{definecmo} 
the second term is bounded by a constant times
\begin{align*}
\sum_{k'=0}^{k-1}2^{-k'\delta } &
|I_{p}|^{\frac{1}{q_{2}'}}\| b_{2}f\|_{L^{q_{2}}(\mathbb R^{n})}
\inf_{x\in 2^{k'}I} M_{q_{1}}b_{1}(x)
\tilde F_{K}(2^{k'}I,I,2^{k'}I)
\\
&\lesssim |I_{p}|^{\frac{1}{q_{2}'}}B_{I,q_{2}}^{b_{2}}
|I|^{-\frac{1}{q_{2}'}}
\inf_{x\in I} M_{q_{1}}b_{1}(x)
\lesssim \langle M_{q_{1}}b_{1}\rangle_{I}B_{I,q_{2}}^{b_{2}},
\end{align*}
where we used boundedness of $\tilde F_{K}$.
Finally, 
we apply the result of Lemma \ref{definecmo} to bound the last term by
\begin{align*}
2^{-k\delta }&
|I_{p}|^{\frac{1}{q_{2}'}}\| b_{2}f\|_{L^{q_{2}}(\mathbb R^{n})}
\inf_{x\in 2^{k}I} M_{q_{1}}b_{1}(x)
\tilde F_{K}(2^{k}I,I,2^{k}I)
\lesssim 
B_{I, q_{2}}^{b_{2}}\langle M_{q_{1}}b_{1}\rangle_{I}.
\end{align*}
 
These estimates
show
$
|{\mathcal L}_{b}(f)|\lesssim 1
$ 
for every atom $f$, proving that 
${\mathcal L}_{b}$ 
defines
a bounded linear functional on 
$H_{b}^1(\mathbb R^{n})$. Hence, 
by the $H_{b}^1(\mathbb R^{n})$-$\BMO_{b}(\mathbb R^{n})$ duality of Lemma \ref{dualityBMOH1}, 
the functional ${\mathcal L}_{b}$ is represented
by a function in $\BMO_{b}(\mathbb R^{n})$ denoted by  $Tb_{1}$, that is, 
${\mathcal L}_{b}(f)=\langle Tb_{1},b_{2}f\rangle $.

\vskip10pt
In order to prove membership in $\CMO_{b}(\mathbb R^{n})$, we need to show that 
$
\lim_{M\rightarrow \infty}\langle {P_{M}^{*}}^{\hspace{-.1cm}\perp}Tb_{1},b_{2}f\rangle =0
$
uniformly 
for all $f$ 
in the unit ball of $H_{b}^{1}(\mathbb R^{n})$. 
Let $I\in \mathcal D$ and $f$ be an 
atom in 
$H_{b}^{1}(\mathbb R^{n})$ supported on
$I$ with zero mean with respect $b_{2}$ and 
$\| fb_{2}\|_{L^{p'}(\mathbb R^{n})}\lesssim B_{I,p'}^{b_{2}}
|I|^{-1/p}$.

For $\epsilon >0$, we fix $k\in \mathbb N$, $k>1$, so that 
$2^{-k\delta }\langle M_{q_{1}}b_{1}\rangle_{I}
B_{I,q_{2}}^{b_{2}}<\epsilon $. 
Moreover, due to compactness of $T$, 
we can choose  
$M>0$
such that $I\in \mathcal D_{M}$ and
$\| {P_{M}^{*}}^{\hspace{-.1cm}\perp}T\|_{p,p} 2^{k/p}[b_{1}]_{2^{k}I_{p},p}
B_{I,p'}^{b_{2}}<\epsilon $.

We decompose as follows:
\begin{align}\label{decortho}
\langle {P_{M}^{*}}^{\hspace{-.1cm}\perp}Tb_{1},b_{2}f\rangle
\hspace{-.05cm}=\hspace{-.05cm}\langle {P_{M}^{*}}^{\perp}T(b_{1}\chi_{2^{k}I_{p}}),b_{2}f\rangle
\hspace{-.05cm}+\hspace{-.06cm}\sum_{k'=k}^{\infty }\langle T(b_{1}\Psi_{k'}),P_{M}^{\perp}(b_{2}f)\rangle .
\end{align}
Notice that 
$
P_{M}(b_{2}f)
=\sum_{J\in \mathcal D_{M}}
|J|^{1/2}\langle b_{2}f,\tilde\psi_{J}^{b_{2}}\rangle |J|^{-1/2}\psi_{J}^{b_{2}}
$
is a finite linear combination of functions $|J|^{-1/2}\psi_{J}^{b_{2}}=|J|^{-1/2}h_{I}^{b_{2}}b_{2}$ with 
$|J|^{-1/2}h_{I}^{b_{2}}$ being $p'$-atoms  and so, it belongs to $H_{b}^{1}(\mathbb R^{n})$.
Then we also get $P_{M}^{\perp}(b_{2}f)\in H_{b}^{1}(\mathbb R^{n})$, which justifies \eqref{decortho}.
We bound the first term in \eqref{decortho} as follows: 
\begin{align*}
\| {P_{M}^{*}}^{\hspace{-.1cm}\perp}T\|_{p,p} &\| b_{1}\chi_{2^{k}I_{p}} \|_{L^p(\mathbb R^{n})}
\| b_{2}f\|_{L^{p'}(\mathbb R^{n})}
\\
&\lesssim \| {P_{M}^{*}}^{\hspace{-.1cm}\perp}T\|_{p,p} [b_{1}]_{2^{k}I_{p},p}
|2^{k}I|^{\frac{1}{p}}
B_{I,p'}^{b_{2}}
|I|^{-\frac{1}{p}}
\\
&= \| {P_{M}^{*}}^{\hspace{-.1cm}\perp}T\|_{p,p} [b_{1}]_{2^{k}I_{p},p} 2^{\frac{k}{p}}
B_{I,p'}^{b_{2}}
<\epsilon .
\end{align*}

Applying the definition of ${P_{M}^{*}}^{\perp}$, we rewrite the second term as
$$
\sum_{k'=k}^{\infty }\langle T(b_{1}\Psi_{k'}),b_{2}f\rangle 
-\sum_{k'=k}^{\infty }\langle P_{M}^{*}T(b_{1}\Psi_{k'}),b_{2}f\rangle =A+B
$$

For the new first term, we have from the proof of Lemma \ref{definecmo} that
\begin{align*}
|A|&\lesssim\sum_{k'=k}^{\infty }2^{-k'\delta } 
|I_{p}|^{\frac{1}{q_{2}'}}\| b_{2}f\|_{L^{q_{2}}(\mathbb R^{n})}
\inf_{x\in 2^{k'}I} M_{q_{1}}b_{1}(x)
F^{D}_{K}(2^{k'}I,I,2^{k'}I)
\\
&\lesssim \sum_{k'=k}^{\infty }2^{-k'\delta } 
B_{I,q_{2}}^{b_{2}}
\inf_{x\in I} M_{q_{1}}b_{1}(x)
\lesssim 2^{-k\delta }B_{I,q_{2}}^{b_{2}}\langle M_{q_{1}}b_{1}\rangle_{I}
<\epsilon .
\end{align*}
By definition, we rewrite each term of $B$ as
\begin{align}\label{finallast}
\langle P_{M}^{*}T(b_{1}\Psi_{k'}),b_{2}f\rangle
&=\sum_{J\in  {\mathcal D}_{M}(I)}
\langle T(b_{1}\Psi_{k'}),\psi_{J}^{b_{2}}\rangle
\langle \tilde\psi_{J}^{b_{2}},b_{2}f\rangle
\end{align}
Since $b_{2}f$ and $\tilde\psi_{J}^{b_{2}}$ have compact support on $I_{p}$ and $J_{p}$ respectively, the non-null terms in the sum arise when $J_{p}\subset I_{p}$.  This is obvious when  
$I_{p}\cap J_{p}=\emptyset$.
For those cubes $J$ so that $I_{p}\subsetneq J_{p}$, we have 
 $\langle \tilde\psi_{J}^{b_{2}}, b_{2}f\rangle =\langle b_{2}\rangle_{J}\langle h_{J}^{b_{2}}, b_{2}f\rangle =0$ since
 $h_{J}^{b_{2}}$ is constant on $I_{p}$ and $b_{2}f$ has mean zero.
 
Now, $b_{1}\Psi_{k}$ is supported on $(2^{k'}I_{p})^{c}$ and $\psi_{J}^{b_{2}}$ is supported on $I_{p}$ and so, we can use the integral representation to rewrite \eqref{finallast} as
\begin{align}\label{ortoT1other}
\nonumber
\sum_{J\in  {\mathcal D}_{M}(I)}
&\int \int b_{1}(t)\Psi_{k'}(t) \psi_{J}^{b_{2}}(z)K(t,z)dzdt
\int b_{2}(x)f(x)\tilde\psi_{J}^{b_{2}}(x)dx
\\
&=\int \int b_{1}(t)\Psi_{k'}(t)b_{2}(x)f(x)
{\mathcal K}(t,x)dtdx
\end{align}
with $t\! \in \! (2^{k'+1}I_{p})\backslash (2^{k'}I_{p})$,
$
{\displaystyle
{\mathcal K}(t,x)\! =\hspace{-.4cm}\sum_{J\in  {\mathcal D}_{M}(I)}
\int \psi_{J}^{b_{2}}(z)K(t,z)dz \, \tilde\psi_{J}^{b_{2}}(x)}.
$


We now aim to estimate ${\mathcal K}(t,x)$.
By the mean zero of $\psi_{J}^{b_{2}}(z)$, 
$$
\int \psi_{J}^{b_{2}}(z)K(t,z)dz
=\int \psi_{J}^{b_{2}}(z)(K(t,z)-K(t,c(J)))dz.
$$
Since $t\in (2^{k'}I_{p})^{c}$, $z\in J_{p}\subset I_{p}$ and $k'>1$, we get 
\begin{align*}
|t-z|_{\infty }&\geq |t-c(I_{p})|_{\infty }-|c(I_{p})-z|_{\infty }\geq 2^{k'-1}\ell(I_{p})-\ell(I_{p})/2
\\
&>\ell(I_{p})\geq \ell(J_{p})\geq 2|z-c(J_{p})|_{\infty }.
\end{align*}
Whence, 
\begin{align*}
\Big| \int \psi_{J}^{b_{2}}(z)K(t,z)dz\Big|
&
\lesssim 
\int_{J}|\psi_{J}^{b_{2}}(z)|\frac{|z-c(J_{p})|_{\infty }^{\delta }}{|t-z|_{\infty }^{n+\delta }}F_{K}(t,z,c(J_{p}))dz,
\end{align*}
with 
$
F_{K}(t,z,c(J_{p}))=L(|t-c(J_{p})|_{\infty })S(|z-c(J_{p})|_{\infty })D\Big(1+\frac{|t+c(J_{p})|_{\infty }}{1+|t-c(J_{p})|_{\infty }}\Big)
$. 
By Lemma \ref{boundtoFK}, 
$
F_{K}(t,z,c(J_{p}))\lesssim F_{K}(2^{k'}I, J,2^{k'}I) 
$
and so,
\begin{align*}
\Big| \int \psi_{J}^{b_{2}}(z)K(t,z)dz\Big|
&\leq \|\psi_{J}^{b_{2}}\|_{L^{1}(\mathbb R^{n})} \frac{\ell(J)^{\delta }}{2^{k'(n+\delta)}\ell(I)^{n+\delta }}F_{K}(2^{k'}I, J,2^{k'}I)
\\
&
\leq  B_{J,1}^{b_{2}}
|J|^{\frac{1}{2}}\frac{\ell(J)^{\delta }}{2^{k'(n+\delta)}\ell(I)^{n+\delta }}F_{K}(2^{k'}I, J,2^{k'}I).
\end{align*}
using Lemma \ref{psiortho}. With this 
and $\| \tilde\psi_{J}^{b_{2}}\|_{L^{\infty }(\mathbb R^{n})}\lesssim C_{J}^{b_{2}}|\langle b_{2}\rangle_{J}||J|^{-\frac{1}{2}}$,
\begin{align*}
|{\mathcal K}(t,x)|
&\lesssim \hspace{-.2cm}
\sum_{\tiny \begin{array}{l}J\in  {\mathcal D}_{M}\\ x\in J_{p}\subseteq I_{p}\end{array}}
(C_{J}^{b_{2}})^{2}[b_{2}]_{J,1}|\langle b_{2}\rangle_{J}|F_{K}(2^{k'}I, J,2^{k'}I)
\frac{\ell(J)^{\delta }}{2^{k'(n+\delta)}\ell(I)^{n+\delta }}
\\
&\lesssim \frac{1}{2^{k'(n+\delta)}|I|}\sum_{\tiny \begin{array}{l}J\in  {\mathcal D}_{M}\\ x\in J_{p}\subseteq I_{p}\end{array}}
B\! F(I,J)
\Big(\frac{\ell(J)}{\ell(I)}\Big)^{\delta}.
\end{align*}

Since the cubes in the sum satisfy $ x\in J_{p}\in  {\mathcal D}_{M}(I)$, they can be parametrized by their side length as $\ell(J^{r})=2^{-r}\ell(I_{p})$ with 
$0\leq r \leq M+\log \ell(I)$. With this and the bound $B\! F(I,J)\lesssim 1$, 
\begin{align*}
|{\mathcal K}(t,x)|
&\lesssim \frac{1}{2^{k'(n+\delta)}|I_{p}|}\sum_{r\geq 0}
2^{-r\delta}
\lesssim \frac{1}{2^{k'(n+\delta)}|I_{p}|}
\end{align*}
With this estimate we have from \eqref{finallast} and \eqref{ortoT1other}
\begin{align*}
|\langle P_{M}^{*}T(b_{1}\Psi_{k'}),b_{2}f\rangle |
&\lesssim \int \int |b_{1}(t)\Psi_{k'}(t)| |b_{2}(x)f(x)|
\frac{1}{2^{k'(n+\delta)}|I_{p}|}dtdx
\\
&\leq \frac{1}{2^{k'(n+\delta)}|I_{p}|}\| b_{1}
\|_{L^{1}(2^{k'}I)} \|b_{2}f\|_{L^{1}(I)}
\\
&\lesssim  \frac{1}{2^{k'(n+\delta)}|I_{p}|}[b_{1}]_{2^{k'}I,1}|2^{k'}I| B_{I,1}^{b_{2}}
\lesssim \frac{1}{2^{k'\delta}}\langle M_{1}b_{1}\rangle_{I}B_{I,1}^{b_{2}}
\end{align*}
Then, by the choice of $k$, we finally get
\begin{align*}
|B|&\leq \sum_{k'=k}^{\infty }|\langle P_{M}^{*}T(b_{1}\Psi_{k'}),b_{2}f\rangle |
\lesssim \langle M_{1}b_{1}\rangle_{I} B_{I,1}^{b_{2}} 
\sum_{k'=k}^{\infty }\frac{1}{2^{k'\delta}}
<\epsilon
\end{align*}

\bibliographystyle{amsplain}

\end{document}